\documentclass{article}%
\usepackage{amsmath}
\usepackage{amsfonts}
\usepackage{amssymb}
\usepackage{graphicx}%
\setcounter{MaxMatrixCols}{30}
\providecommand{\U}[1]{\protect\rule{.1in}{.1in}}
\newtheorem{theorem}{Theorem}

\newtheorem{corollary}[theorem]{Corollary}

\newtheorem{definition}[theorem]{Definition}
\newtheorem{example}[theorem]{Example}

\newtheorem{lemma}[theorem]{Lemma}
\newtheorem{notation}[theorem]{Notation}

\newtheorem{proposition}[theorem]{Proposition}
\newtheorem{remark}[theorem]{Remark}

\newenvironment{proof}[1][Proof]{\noindent\textbf{#1.} }{\ \rule{0.5em}{0.5em}}
\numberwithin{equation}{section}
\numberwithin{theorem}{section}
\begin{document}

\title{Space regularity for evolution operators modeled on H\"{o}rmander vector
fields with time dependent measurable coefficients}
\author{Marco Bramanti}
\maketitle

\begin{abstract}
We consider a heat-type operator $\mathcal{L}$ structured on the left
invariant $1$-homogeneous vector fields which are generators of a Carnot
group, multiplied by a uniformly positive matrix of bounded measurable
coefficients depending only on time. We prove that if $\mathcal{L}u$ is smooth
with respect to the space variables, the same is true for $u$, with
quantitative regularity estimates in the scale of Sobolev spaces defined by
right invariant vector fields. Moreover, the solution and its space
derivatives satisfy a $1/2$-H\"{o}lder continuity estimate with respect to
time. The result is proved both for weak solutions and for distributional
solutions, in a suitable sense\footnote{MSC. Primary 35R03. Secondary 35B65.
35R05.
\par
Keywords. Carnot groups.\ Heat-type operators. Discontinuous coefficients.
H\"{o}rmander's theorem.}.

\end{abstract}

Let $\mathbb{G}=\left(  \mathbb{R}^{N},\circ,D_{\lambda}\right)  $ a Carnot
group and let $X_{1},...,X_{q}$ be the generators of its Lie algebra, so that
the canonical sublaplacian%
\[
\sum_{i=1}^{q}X_{i}^{2}%
\]
and the corresponding heat operator%
\[
\sum_{i=1}^{q}X_{i}^{2}-\partial_{t}%
\]
are hypoelliptic in $\mathbb{R}^{N}$ and $\mathbb{R}^{N+1}$, respectively
(precise definitions will be given in Section \ref{Sec Carnot}). Let us now
consider%
\begin{equation}
\mathcal{L}=\sum_{i,j=1}^{q}a_{ij}\left(  t\right)  X_{i}X_{j}-\partial_{t}
\label{heat}%
\end{equation}
where $\left\{  a_{ij}\left(  t\right)  \right\}  _{i,j=1}^{q}$ is a real
symmetric matrix of bounded measurable coefficients, uniformly positive:%
\begin{equation}
\nu\left\vert \xi\right\vert ^{2}\leqslant\sum_{i,j=1}^{q}a_{ij}\left(
t\right)  \xi_{i}\xi_{j}\leqslant\nu^{-1}\left\vert \xi\right\vert ^{2}
\label{ellipticity}%
\end{equation}
for every $\xi\in\mathbb{R}^{q}$, a.e. $t\in\left(  0,T\right)  $. We want to
prove a regularity result for $\mathcal{L}$ in the space variables, that is,
roughly speaking: if $u\in W^{1,2}\left(  \left(  0,T\right)  ,L_{loc}%
^{2}\left(  \mathbb{R}^{N}\right)  \right)  $ is a weak solution to
$\mathcal{L}u=F$, $u\left(  0,\cdot\right)  =0$ and $F$ is smooth, \emph{with
respect to the space variables}, in some domain $\left(  0,T\right)
\times\Omega$, then the same is true for $u$, with quantitative regularity
estimates on $u$ in terms of $\mathcal{L}u$. Also, we will prove that, if $F$
is smooth w.r.t. the space variables, then $u$ and every space derivative
$\partial_{x}^{\alpha}u$ are $\frac{1}{2}$-H\"{o}lder continuous with respect
to $t$. See Theorems \ref{Thm subsubelliptic} and \ref{Thm subell plus} for
the precise statements. This kind of regularity is the best we can hope, even
for a uniformly parabolic operator%
\[
\mathcal{L}u=u_{t}-a\left(  t\right)  u_{xx}%
\]
as soon as $a$ is only $L^{\infty}$ (see Example \ref{contres parabolico}).
The above regularity result can be extended also to distributional solutions
belonging to $W^{1,2}\left(  \left(  0,T\right)  ,\mathcal{D}^{\prime}\left(
\mathbb{R}^{N}\right)  \right)  $ (see Theorem \ref{Thm hypo} for the precise
statement). This can be seen as a kind of H\"{o}rmander's theorem with respect
to the space variables.

Results of this kind have been proved by Krylov \cite{K1}, who considered
operators%
\[
\mathcal{L}=\partial_{t}-\sum_{k=1}^{q}L_{k}^{2}+L_{0}%
\]
with%
\[
L_{k}=\sum_{i=1}^{N}\sigma^{ik}\left(  t,x\right)  \partial_{x_{i}}%
\]
where the functions $\sigma^{ik}\left(  t,x\right)  $ are assumed to have
$x$-derivatives of every order uniformly bounded for $x\in\mathbb{R}^{N}$ and
$t\in\left(  0,1\right)  $, and the vector fields $L_{0},L_{1},...,L_{q}$ for
every fixed $t$ satisfy H\"{o}rmander's condition in $\mathbb{R}^{N}$. Now,
every operator (\ref{heat}) can be rewritten as%
\[
-\mathcal{L}=\partial_{t}-\sum_{k=1}^{q}L_{k}^{2}%
\]
with%
\[
\sigma^{ik}\left(  t,x\right)  =\sum_{j=1}^{q}m_{jk}\left(  t\right)
b_{ji}\left(  x\right)
\]
where
\[
X_{j}=\sum_{i=1}^{N}b_{ji}\left(  x\right)  \partial_{x_{i}}%
\]
and%
\[
a_{ij}\left(  t\right)  =\sum_{k=1}^{q}m_{ik}\left(  t\right)  m_{jk}\left(
t\right)
\]
so that%
\begin{align*}
D_{x}^{\alpha}\sigma^{ik}\left(  t,x\right)   &  =\sum_{j=1}^{q}m_{jk}\left(
t\right)  D_{x}^{\alpha}b_{ji}\left(  x\right) \\
\left\vert D_{x}^{\alpha}\sigma^{ik}\left(  t,x\right)  \right\vert  &
\leqslant c_{\nu}\sum_{j=1}^{q}\left\vert D_{x}^{\alpha}b_{ji}\left(
x\right)  \right\vert .
\end{align*}
Since the coefficients $b_{ji}\left(  x\right)  $ of the generators on a
Carnot group are polynomials, the functions $\left\vert D_{x}^{\alpha}%
b_{ji}\left(  x\right)  \right\vert $ are not \emph{globally} bounded on
$\mathbb{R}^{N}$. Therefore, although the class of operators that we consider
is strictly contained in the class considered by Krylov as to their structure,
the assumption on $\sigma^{ik}\left(  t,x\right)  $ made in \cite{K1} is not
satisfied in our situation.

Actually, the technique employed in this paper is very different from that in
\cite{K1}. In \cite{K1}, following the classical approach introduced by Kohn
\cite{Koh} and Ole\u{\i}nik-Radkevi\v{c} \cite{OlRa}, pseudodifferential
operators and Sobolev spaces of fractional order are used. Here, instead, we
adapt to the evolutionary case the technique introduced in \cite{BB1} to give
a proof of H\"{o}rmander's theorem for sublaplacians on Carnot groups. The
main idea consists in measuring the regularity of solutions of an equation
$\mathcal{L}u=f$, where $\mathcal{L}$ is a \emph{left invariant} operator, in
terms of Sobolev spaces induced by \emph{right invariant }vector fields. Since
a right invariant and a left invariant operator always commute, this approach
greatly simplifies the proof of higher order estimates. We handle Sobolev
norms with respect to vector fields by means of equivalent norms defined in
terms of finite difference operators, in the directions of the vector fields
$X_{1},...,X_{q}$. This feature of our argument is reminiscent of the original
proof of H\"{o}rmander's theorem given in \cite{Hor2}, although in the richer
framework of Carnot groups the proof becomes much simpler.

Let us now give some motivation for the present research and describe some
related literature. The regularity result proved in \cite{K1} has been applied
by the same Author in \cite{K3} to prove an analogous result for stochastic
PDEs, and in \cite{K4}, in the context of filtering problems. We refer to
\cite{K3} for motivations to prove this result without any continuity
assumption on the coefficients with respect to time.

Hyperbolic operators of the kind
\[
Hu=u_{tt}-\sum_{i,j=1}^{n}a_{ij}\left(  t\right)  u_{x_{i}x_{j}}%
\]
with merely bounded measurable $a_{ij}$ have been studied by many authors, see
for instance \cite{CDGS}, \cite{CDSR}, \cite{GR} and references therein. In
particular, \cite{GR} gives some physical motivation to study this class of
operators under no regularity condition on $a_{ij}\left(  t\right)  $.

Operators of the kind%
\begin{equation}
\mathcal{L}=\sum_{i,j=1}^{q}a_{ij}\left(  t,x\right)  X_{i}X_{j}-\partial_{t},
\label{nondivergence heat}%
\end{equation}
satisfying (\ref{ellipticity}) have been studied by several Authors, assuming
the coefficients $a_{ij}\left(  t,x\right)  $ either H\"{o}lder continuous or
with vanishing mean oscillation, and proving a priori estimates and regularity
results in the scale of H\"{o}lder or Sobolev spaces induced by the vector
fields $\left\{  X_{i}\right\}  _{i=1}^{q}$ and the distance they induce. See
for instance \cite{BB3}, \cite{BBLU}, \cite{BZ} and references therein. In
\cite{BBLU}, for the operator $\mathcal{L}$ with H\"{o}lder continuous
coefficients, a heat kernel has been constructed and shown to satisfy sharp
Gaussian estimates, which also imply a scale invariant Harnack inequality.

The operators (\ref{heat}) studied in the present paper can also be seen as
model operators to study the more general class (\ref{nondivergence heat})
with the coefficients satisfying some moderate regularity assumtpion in $x$,
but only $L^{\infty}$ with respect to time, an area of research that we plan
to attack in the future.

\section{Preliminaries about Carnot groups\label{Sec Carnot}}

Let us recall some standard definitions and results that will be useful in the
following. For the proofs of these facts the reader is referred to \cite{Fo2},
\cite[Chap.1]{BLUb}. A \emph{homogeneous group} (in $\mathbb{R}^{N}$) is a Lie
group $\left(  \mathbb{R}^{N},\circ\right)  $ (where the group operation
$\circ$ will be thought as a \textquotedblleft translation\textquotedblright)
endowed with a one parameter family $\left\{  D_{\lambda}\right\}
_{\lambda>0}$ of group automorphisms (\textquotedblleft
dilations\textquotedblright) which act this way:%
\begin{equation}
D_{\lambda}\left(  x_{1},x_{2},...,x_{N}\right)  =\left(  \lambda^{\alpha_{1}%
}x_{1},\lambda^{\alpha_{2}}x_{2},...,\lambda^{\alpha_{N}}x_{N}\right)
\label{dilations1}%
\end{equation}
for suitable integers $1=\alpha_{1}\leqslant\alpha_{2}\leqslant...\leqslant
\alpha_{N}$. We will write $\mathbb{G}=\left(  \mathbb{R}^{N},\circ
,D_{\lambda}\right)  $ to denote this structure. The number%
\[
Q=\sum_{i=1}^{N}\alpha_{i}%
\]
will be called \emph{homogeneous dimension }of $\mathbb{G}$. A
\emph{homogeneous norm} on $\mathbb{G}$ is a continuous function%
\[
\left\Vert \cdot\right\Vert :\mathbb{G}\rightarrow\lbrack0,+\infty),
\]
such that, for some constant $c>0$ and every $x,y\in\mathbb{G},$%
\[%
\begin{tabular}
[c]{ll}%
$\left(  i\right)  $ & $\left\Vert x\right\Vert =0\Longleftrightarrow x=0$\\
$\left(  ii\right)  $ & $\left\Vert D_{\lambda}\left(  x\right)  \right\Vert
=\lambda\left\Vert x\right\Vert \text{ }\forall\lambda>0$\\
$\left(  iii\right)  $ & $\left\Vert x^{-1}\right\Vert \leqslant c\left\Vert
x\right\Vert $\\
$\left(  iv\right)  $ & $\left\Vert x\circ y\right\Vert \leqslant c\left(
\left\Vert x\right\Vert +\left\Vert y\right\Vert \right)  .$%
\end{tabular}
\ \ \ \ \ \ \
\]
We will always use the symbol $\left\Vert \cdot\right\Vert $, without any
subscript, to denote a homogeneous norm in $\mathbb{G}$. Examples of
homogeneous norms are the following:%
\[
\left\Vert x\right\Vert =\max_{k=1,2,...,N}\left\vert x_{k}\right\vert
^{\frac{1}{\alpha_{k}}}%
\]
or%
\[
\left\Vert x\right\Vert =\left(  \sum_{k=1}^{N}\left\vert x_{k}\right\vert
^{\frac{Q}{\alpha_{k}}}\right)  ^{1/Q}.
\]
It can be proved that any two homogeneous norms on $\mathbb{G}$ are equivalent.

We say that a smooth function $f$ in $\mathbb{G}\setminus\left\{  0\right\}  $
is $D_{\lambda}$-\emph{homogeneous of degree} $\beta\in\mathbb{R}$ (or simply
\textquotedblleft$\beta$-homogeneous\textquotedblright) if%
\[
f\left(  D_{\lambda}\left(  x\right)  \right)  =\lambda^{\beta}f\left(
x\right)  \text{ \ \ }\forall\lambda>0,x\in\mathbb{G}\setminus\left\{
0\right\}  \text{.}%
\]

Given any differential operator $P$ with smooth coefficients on $\mathbb{G}$,
we say that $P$ is \emph{left invariant} if for every $x,y\in\mathbb{G}$ and
every smooth function $f$%
\[
P\left(  L_{y}f\right)  \left(  x\right)  =L_{y}\left(  Pf\left(  x\right)
\right)  ,
\]
where%
\[
L_{y}f\left(  x\right)  =f\left(  y\circ x\right)  .
\]
Analogously one defines the notion of \emph{right invariant} differential
operator. Also, $P$ is said $\beta$-\emph{homogeneous} (for some $\beta
\in\mathbb{R}$) if%
\[
P\left(  f\left(  D_{\lambda}\left(  x\right)  \right)  \right)
=\lambda^{\beta}\left(  Pf\right)  \left(  D_{\lambda}\left(  x\right)
\right)
\]
for every smooth function $f$, $\lambda>0$ and $x\in\mathbb{G}$.

A \emph{vector field }is a first order differential operator
\[
X=\sum_{i=1}^{N}c_{i}\left(  x\right)  \partial_{x_{i}}.
\]

Let $\mathfrak{g}$ be the \emph{Lie algebra of left invariant vector fields}
over $\mathbb{G}$, where the Lie bracket of two vector fields is defined as
usual by%
\[
\left[  X,Y\right]  =XY-YX.
\]

Let us denote by $X_{1},X_{2},\ldots,X_{N}$ the \emph{canonical base} of
$\mathfrak{g}$, that is for $i=1,2,....,N$, $X_{i}$ is the only left invariant
vector field that agrees with $\partial_{x_{i}}$ at the origin. Also,
$X_{1}^{R},X_{2}^{R},\ldots,X_{N}^{R}$ will denote the right invariant vectors
fields that agree with $\partial_{x_{1}},\partial_{x_{2}},...,\partial_{x_{N}%
}$ (and hence with $X_{1},X_{2},\ldots,X_{N}$) at the origin.

We assume that for some integer $q<N$ the vector fields $X_{1},X_{2}%
,\ldots,X_{q}$ are $1$-homogeneous and the Lie algebra generated by them is
$\mathfrak{g}$. If $s$ is the maximum length of commutators
\[
\left[  X_{i_{1}},\left[  X_{i_{2}},...,\left[  X_{i_{s-1}},X_{i_{s}}\right]
\right]  \right]  ,\text{ \ }i_{j}\in\left\{  1,2,...,q\right\}
\]
required to span $\mathfrak{g}$, then we will say that $\mathfrak{g}$ is a
\emph{stratified Lie algebra of step} $s$, $\mathbb{G}$ is a \emph{Carnot
group} (or a stratified homogeneous group) and its \emph{generators }%
$X_{1},X_{2},\ldots,X_{q}$ satisfy H\"{o}rmander's condition at step $s$ in
$\mathbb{G}$. Under these assumptions, by H\"{o}rmander's theorem (see
\cite{Hor2}), the\emph{ canonical sublaplacian}%
\[
L=\sum_{i=1}^{q}X_{i}^{2}%
\]
is hypoelliptic in $\mathbb{R}^{N}$, that is: for every domains $\Omega
^{\prime}\subset\Omega\subset\mathbb{R}^{N}$, whenever $u\in\mathcal{D}%
^{\prime}\left(  \Omega\right)  $ solves in distributional sense the equation
$Lu=f$ in $\Omega$, then $f\in C^{\infty}\left(  \Omega^{\prime}\right)
\Rightarrow u\in C^{\infty}\left(  \Omega^{\prime}\right)  .$

Analogously, the corresponding heat operator
\[
H=\sum_{i=1}^{q}X_{i}^{2}-\partial_{t}%
\]
is hypoelliptic in $\mathbb{R}^{N+1}$.

We will make use of the \emph{Sobolev spaces} $W_{X}^{k,p}\left(
\mathbb{G}\right)  $, $W_{X^{R}}^{k,p}\left(  \mathbb{G}\right)  $
\emph{induced by the systems of vector fields}%
\[
X=\left\{  X_{1},X_{2},\ldots,X_{q}\right\}  ,X^{R}=\left\{  X_{1}^{R}%
,X_{2}^{R},\ldots,X_{q}^{R}\right\}  ,
\]
respectively. More precisely, given an open subset $\Omega$ of $\mathbb{R}%
^{N}$, we say that $f\in W_{X}^{1,2}\left(  \Omega\right)  $ if $f\in
L^{2}\left(  \Omega\right)  $ and there exist, in weak sense, $X_{j}f\in
L^{2}\left(  \Omega\right)  $ for $j=1,2,...,q.$ Inductively, we say that
$f\in W_{X}^{k,2}\left(  \Omega\right)  $ for $k=2,3,...$ if $f\in
W_{X}^{k-1,2}\left(  \Omega\right)  $ and any weak derivative of order $k-1$
of $f$, $X_{j_{1}}X_{j_{2}}...X_{j_{k-1}}f$, belongs to $W_{X}^{1,2}\left(
\Omega\right)  $. We set%
\[
\left\Vert f\right\Vert _{W_{X}^{k,2}\left(  \Omega\right)  }=\left\Vert
f\right\Vert _{L^{2}\left(  \Omega\right)  }+\sum_{h=1}^{k}\sum_{j_{i}%
=1,2,...,q}\left\Vert X_{j_{1}}X_{j_{2}}...X_{j_{h}}f\right\Vert
_{L^{2}\left(  \Omega\right)  }.
\]

The space $W_{X^{R}}^{k,2}\left(  \Omega\right)  $ has a similar definition.
We will also use local Sobolev spaces. For example, we will say that $f\in
W_{X,loc}^{k,2}\left(  \Omega\right)  $ if for every $\varphi\in C_{0}%
^{\infty}\left(  \Omega\right)  $, we have $\varphi f\in W_{X}^{k,2}\left(
\Omega\right)  $.

For homogeneity reasons, the generators $X_{1},...,X_{q}$ satisfy the simple
relation $X_{i}^{\ast}=-X_{i}$ (where $X^{\ast}$ stands for the transposed
operator of $X$). In other words,%
\begin{equation}
\int_{\mathbb{G}}f\left(  X_{i}g\right)  =-\int_{\mathbb{G}}\left(
X_{i}f\right)  g \label{transposed}%
\end{equation}
whenever $f\in W_{X,loc}^{1,2}\left(  \mathbb{G}\right)  $ and $g\in C_{0}%
^{1}\left(  \mathbb{G}\right)  $.

The validity of H\"{o}rmander's condition at step $s$ implies the following important:

\begin{proposition}
[{See \cite[Prop. 2.1]{BB1}}]\label{Prop reg sobolev}Under the above
assumptions we have:

1.
\[%
{\textstyle\bigcap\limits_{k=1}^{\infty}}
W_{X}^{k,2}\left(  \Omega\right)  \subset C^{\infty}\left(  \Omega\right)  .
\]

2. For any positive integer $k$ and any $\Omega^{\prime}\Subset\Omega$ there
exists a constant $c>0$ such that, for every $u\in W_{X}^{ks,2}\left(
\Omega\right)  $ we have%
\[
\left\Vert u\right\Vert _{W^{k,2}\left(  \Omega^{\prime}\right)  }\leqslant
c\left\Vert u\right\Vert _{W_{X}^{ks,2}\left(  \Omega\right)  },
\]
where $W^{k,2}\left(  \Omega^{\prime}\right)  $ denotes the standard Sobolev space.
\end{proposition}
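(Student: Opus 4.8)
The plan is to reduce everything to the quantitative estimate of Part 2 and to a single subelliptic inequality expressing the gain of $1/s$ Euclidean derivatives from one horizontal derivative. Part 1 then comes for free: if $u\in\bigcap_{k}W_{X}^{k,2}(\Omega)$, then Part 2 gives $u\in W^{k,2}(\Omega^{\prime})$ for every $k$ and every $\Omega^{\prime}\Subset\Omega$, and the standard Sobolev embedding $\bigcap_{k}W^{k,2}_{loc}(\Omega)\subset C^{\infty}(\Omega)$ yields $u\in C^{\infty}(\Omega)$. The heuristic behind Part 2 is that, since $X_{1},\dots,X_{q}$ together with their commutators up to length $s$ span the tangent space at every point, each Euclidean derivative $\partial_{x_{i}}$ is a combination, with smooth coefficients, of iterated commutators of the $X_{j}$'s of length at most $s$; hence one Euclidean derivative ``costs'' at most $s$ horizontal derivatives, and $k$ Euclidean derivatives cost at most $ks$ horizontal ones.

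The key tool I would isolate is the fractional subelliptic estimate: for every $\sigma\geqslant0$ and every pair $\Omega^{\prime\prime}\Subset\Omega^{\prime}\Subset\Omega$ there is a constant such that
\[
\|u\|_{H^{\sigma+1/s}(\Omega^{\prime\prime})}\leqslant c\Bigl(\sum_{j=1}^{q}\|X_{j}u\|_{H^{\sigma}(\Omega^{\prime})}+\|u\|_{H^{\sigma}(\Omega^{\prime})}\Bigr),
\]
where $H^{\sigma}$ denotes the localized Bessel-potential space of order $\sigma$. The base case $\sigma=0$ is exactly H\"{o}rmander's subelliptic estimate, which I would prove by his commutator argument: one controls $\||D|^{1/s}(\varphi u)\|_{L^{2}}$ by integrating by parts and repeatedly using that a bracket of length $\ell\leqslant s$ contributes a full Euclidean derivative while costing $\ell$ applications of the $X_{j}$'s, the bookkeeping being arranged so that the total loss matches the gain $1/s$. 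The passage to general $\sigma$ is handled either by composing with $\langle D\rangle^{\sigma}$ and commuting it past the $X_{j}$'s (the commutators being of lower order), or, in the spirit of this paper, by rephrasing the whole inequality in terms of finite-difference operators in the directions of the $X_{j}$'s.

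With this estimate in hand, Part 2 follows by induction on $\ell$ of the claim
\[
\|u\|_{H^{\ell/s}(\Omega^{\prime})}\leqslant c\,\|u\|_{W_{X}^{\ell,2}(\Omega)}.
\]
For the inductive step I would apply the fractional subelliptic estimate with $\sigma=(\ell-1)/s$: the term $\|u\|_{H^{(\ell-1)/s}}$ is controlled by the inductive hypothesis (using $W_{X}^{\ell,2}\subset W_{X}^{\ell-1,2}$), while each $\|X_{j}u\|_{H^{(\ell-1)/s}}$ is controlled by applying the inductive hypothesis to $X_{j}u\in W_{X}^{\ell-1,2}$, whose $W_{X}^{\ell-1,2}$-norm is dominated by $\|u\|_{W_{X}^{\ell,2}}$. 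A nested chain of cutoffs $\Omega^{\prime}=\Omega_{\ell}\Subset\Omega_{\ell-1}\Subset\dots\Subset\Omega_{0}=\Omega$ absorbs the shrinking of the domains at each step. Taking $\ell=ks$ gives $\|u\|_{H^{k}(\Omega^{\prime})}\leqslant c\,\|u\|_{W_{X}^{ks,2}(\Omega)}$, which is Part 2 since $H^{k}=W^{k,2}$ for integer $k$.

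The main obstacle is, unsurprisingly, the base case of the fractional subelliptic estimate: extracting the $1/s$ gain from H\"{o}rmander's condition is the entire content of H\"{o}rmander's theorem and requires the careful commutator bookkeeping described above. Everything else---the fractional upgrade, the induction, and the deduction of Part 1---is routine once that inequality is available.
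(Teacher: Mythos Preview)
The paper does not actually prove this proposition; it merely cites \cite[Prop.~2.1]{BB1}. So there is no ``paper's own proof'' to compare against, only the expected argument in the Carnot-group setting.

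Your outline is correct, and your opening heuristic is exactly the right idea. But you then abandon that heuristic and take a substantial detour through fractional Bessel-potential spaces and H\"{o}rmander's subelliptic $1/s$-estimate. In the Carnot-group setting this detour is unnecessary: the heuristic \emph{is} the proof. Concretely, the full canonical basis $X_{1},\dots,X_{N}$ of left-invariant vector fields spans the tangent space at every point, so one can write $\partial_{x_{i}}=\sum_{j=1}^{N}c_{ij}(x)X_{j}$ with polynomial (hence locally bounded) coefficients $c_{ij}$. Each $X_{j}$ with $j>q$ is, by the stratified structure, an iterated commutator of the generators $X_{1},\dots,X_{q}$ of length $\alpha_{j}\leqslant s$, hence a differential operator of order at most $s$ in the generators. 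This immediately gives $\|\partial_{x_{i}}u\|_{L^{2}(\Omega')}\leqslant c\|u\|_{W_{X}^{s,2}(\Omega)}$, and iterating (differentiating the polynomial coefficients produces only lower-order terms) yields Part~2 directly, with no need for fractional spaces, commutator bookkeeping, or the hard base case you flag as the ``main obstacle.'' Part~1 then follows from Part~2 via Sobolev embedding, exactly as you say. So your approach works, but the algebraic structure of Carnot groups lets you bypass the analytic core entirely.
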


Let us point out a relation between left and right invariant operators which
will be very useful in the following.

\begin{proposition}
[{see \cite[Prop. 2.2]{BB1}}]\label{Prop left and right commute}Let
$\mathcal{L},\mathcal{R}$ be any two differential operators on $\mathbb{G}$
with smooth coefficients, left and right invariant, respectively. Then
$\mathcal{L}$ and $\mathcal{R}$ commute:%
\[
\mathcal{LR}f=\mathcal{RL}f
\]
for any smooth function $f.$
\end{proposition}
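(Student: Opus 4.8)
The plan is to reduce the statement to the commutation of the individual generating vector fields and then to extract that commutation directly from the associativity of the group law. First I would invoke the structure theory of homogeneous groups (see \cite{Fo2}, \cite[Chap.\ 1]{BLUb}): every left invariant differential operator with smooth coefficients is a \emph{finite} linear combination, with constant real coefficients, of products $X_{i_{1}}X_{i_{2}}\cdots X_{i_{k}}$ of the left invariant fields $X_{1},\ldots,X_{N}$, and likewise every right invariant operator is such a combination of products of the right invariant fields $X_{1}^{R},\ldots,X_{N}^{R}$. Writing $\mathcal{L}=\sum_{\alpha}a_{\alpha}X^{\alpha}$ and $\mathcal{R}=\sum_{\beta}b_{\beta}(X^{R})^{\beta}$ with constant coefficients, and using bilinearity together with the elementary fact that an operator commuting with $A$ and with $B$ also commutes with $AB$, it suffices to prove that each generator $X_{i}$ commutes with each generator $X_{j}^{R}$ as a first order operator, i.e. that $[X_{i},X_{j}^{R}]=0$.

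The geometric core is then to describe the flows of these fields. Let $c_{i}(t)$ be the one-parameter subgroup generated by $X_{i}$ (the integral curve of $X_{i}$ through the origin) and $d_{j}(s)$ the one generated by $X_{j}^{R}$; these are globally defined since $\mathbb{G}=\mathbb{R}^{N}$ carries a polynomial group law, so the flows are complete. Because $X_{i}$ is left invariant, its integral curve through an arbitrary point $x$ is $t\mapsto x\circ c_{i}(t)$, so the time-$t$ flow of $X_{i}$ is right translation $x\mapsto x\circ c_{i}(t)$; symmetrically, the time-$s$ flow of $X_{j}^{R}$ is left translation $x\mapsto d_{j}(s)\circ x$. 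Composing the two flows in either order sends $x$ to $d_{j}(s)\circ x\circ c_{i}(t)$ in both cases, since
\[
\left(d_{j}(s)\circ x\right)\circ c_{i}(t)=d_{j}(s)\circ\left(x\circ c_{i}(t)\right),
\]
the two sides coinciding by associativity of $\circ$. Hence the flows of $X_{i}$ and $X_{j}^{R}$ commute, which forces $[X_{i},X_{j}^{R}]=0$.

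To assemble the conclusion, note that $[X_{i},X_{j}^{R}]=0$ means precisely $X_{i}X_{j}^{R}=X_{j}^{R}X_{i}$ as operators; by the induction on word length mentioned above, every monomial $X^{\alpha}$ then commutes with every monomial $(X^{R})^{\beta}$, and by linearity $\mathcal{L}\mathcal{R}f=\mathcal{R}\mathcal{L}f$ for all smooth $f$. The only non-formal ingredient is the structure theorem used in the reduction; once that is granted, the commutation of the flows is immediate from associativity. I expect the main obstacle, if one insists on a self-contained argument, to be the justification that the flow of a left invariant field is right translation (and dually for right invariant fields), which rests on applying left invariance of $X_{i}$ to the defining ODE of its integral curve. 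As an alternative to this geometric step one could instead compute $[X_{i},X_{j}^{R}]=0$ directly in coordinates from the explicit polynomial group law $x\circ y$, which is elementary but computationally heavier and obscures the role of associativity.
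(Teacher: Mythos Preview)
Your argument is correct. Note, however, that the paper does not actually supply a proof of this proposition: it merely cites \cite[Prop.~2.2]{BB1} and moves on. So there is no ``paper's own proof'' to compare against here.

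For what it is worth, your approach is the standard one and is essentially what one finds in \cite{BB1} and in the background references \cite{Fo2}, \cite{BLUb}: reduce by linearity and the constant-coefficient expansion of invariant operators to the commutation $[X_i,X_j^R]=0$, and then read this off from associativity of $\circ$. Your flow formulation (the time-$t$ map of $X_i$ is right translation by $\operatorname{Exp}(tX_i)$, and dually for $X_j^R$) is a clean way to package this; an equivalent and slightly more direct variant is to write, for smooth $f$,
\[
X_iX_j^R f(x)=\left.\frac{\partial^2}{\partial t\,\partial s}\right|_{t=s=0} f\bigl(d_j(s)\circ x\circ c_i(t)\bigr)=X_j^R X_i f(x),
\]
the equality of mixed partials doing the work once associativity has placed the two one-parameter subgroups on opposite sides of $x$. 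Either route is fine; your write-up already contains all the necessary ingredients.
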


For every given couple of measurable functions $\varphi,\psi:\mathbb{G}%
\rightarrow\mathbb{R}$ we define%
\[
\varphi\ast\psi\left(  x\right)  =\int_{\mathbb{G}}\varphi\left(  y\right)
\psi\left(  y^{-1}\circ x\right)  dy
\]
whenever the integral makes sense. One can prove the following:

\begin{proposition}
\label{Prop conv distrib}For every couple of measurable functions $f,\psi$
defined on $\mathbb{G}$ such that the following convolutions are well defined,
we have

i) if $\mathcal{P}$ is a left invariant differential operator then%
\begin{equation}
\mathcal{P}\left(  f\ast\psi\right)  =f\ast\mathcal{P\psi}, \label{f*Pg}%
\end{equation}

ii) if $\mathcal{P}$ is a right invariant differential operator then%
\[
\mathcal{P}\left(  \psi\ast f\right)  =\mathcal{P\psi}\ast f
\]
whenever $\mathcal{P\psi}$ exists at least in weak sense.
\end{proposition}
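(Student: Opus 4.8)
The plan is to derive both identities from the covariance of $\mathcal{P}$ under the relevant group translations, differentiating under the integral sign. It is convenient to write $\psi(y\circ x)=\left( L_{y}\psi\right) (x)$ for the left translate (consistently with the definition of $L_y$ above) and $\psi(x\circ y)=\left( R_{y}\psi\right) (x)$ for the right translate. I will also use the standard fact that on a Carnot group the Lebesgue measure is a bi-invariant Haar measure (these groups are nilpotent, hence unimodular, and left and right translations, as well as inversion, have unit Jacobian), so that the changes of variable below leave $dy$ unchanged.

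For part i), the point is that, as a function of $x$, the integrand $\psi\left( y^{-1}\circ x\right) $ is precisely the left translate $\left( L_{y^{-1}}\psi\right) (x)$. Granting that the displayed convolutions are well defined, I bring the left invariant operator $\mathcal{P}$, acting in the variable $x$, inside the integral; by the very definition of left invariance,
\[
\mathcal{P}_{x}\left[ \left( L_{y^{-1}}\psi\right) (x)\right] =\left( L_{y^{-1}}\left( \mathcal{P}\psi\right) \right) (x)=\left( \mathcal{P}\psi\right) \left( y^{-1}\circ x\right) .
\]
Multiplying by $f(y)$ and integrating gives $f\ast\mathcal{P}\psi$, which is (\ref{f*Pg}).

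For part ii) the difficulty is that $\mathcal{P}$ is now \emph{right} invariant, while in $\psi\ast f(x)=\int_{\mathbb{G}}\psi(y)f\left( y^{-1}\circ x\right) dy$ the variable $x$ appears inside $f$, not inside $\psi$. To expose $x$ inside $\psi$ I first perform the substitution $z=y^{-1}\circ x$, i.e. $y=x\circ z^{-1}$; by bi-invariance of the measure this rewrites the convolution as $\int_{\mathbb{G}}\psi\left( x\circ z^{-1}\right) f(z)dz$, in which $x\mapsto\psi\left( x\circ z^{-1}\right) =\left( R_{z^{-1}}\psi\right) (x)$ is a right translate of $\psi$. Differentiating under the integral and invoking right invariance yields $\mathcal{P}_{x}\psi\left( x\circ z^{-1}\right) =\left( \mathcal{P}\psi\right) \left( x\circ z^{-1}\right) $; undoing the substitution produces $\left( \mathcal{P}\psi\right) \ast f$, as claimed.

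The routine technical point is justifying differentiation under the integral sign, which is controlled by the standing hypothesis that the convolutions are well defined, via dominated convergence. The genuinely delicate issue is the clause \textquotedblleft whenever $\mathcal{P}\psi$ exists at least in weak sense\textquotedblright: when $\psi$ is not smooth I cannot differentiate it pointwise, so I would read the identity distributionally, pairing both sides against an arbitrary $\varphi\in C_{0}^{\infty}\left( \mathbb{G}\right) $ and transferring $\mathcal{P}$ onto $\varphi$ through its transposed operator, in the spirit of (\ref{transposed}). This reduces the assertion to the smooth identity already established for the test function, combined with the definition of the weak derivative $\mathcal{P}\psi$ and a final application of Fubini's theorem. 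I expect this last, measure-theoretic bookkeeping to be the main obstacle, the algebraic core of the statement being the translation covariance exploited above.
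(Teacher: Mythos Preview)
The paper does not actually prove this proposition: it is stated as a standard fact (introduced by ``One can prove the following'') and no proof environment follows. Your argument is the standard one and is correct; the translation-covariance computation you give for both the left- and right-invariant cases, together with the change of variable $y\mapsto x\circ z^{-1}$ (legitimate by unimodularity of the underlying nilpotent group), is exactly how these identities are established, and your distributional reading of the ``weak sense'' clause is the natural way to handle non-smooth $\psi$.
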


\section{Subelliptic estimates for heat-type operators with $t$-measurable
coefficients}

For a domain $\Omega\subseteq\mathbb{G}$, let
\[
\Omega_{T}=\left(  0,T\right)  \times\Omega.
\]
We are going to define several function spaces on $\mathbb{G}_{T}=\left(
0,T\right)  \times\mathbb{G}$ that we will use in the following.

The definitions of the spaces $L^{2}\left(  \left(  0,T\right)  ,X\right)  $,
$W^{1,2}\left(  \left(  0,T\right)  ,X\right)  $, $C^{0}\left(  \left[
0,T\right]  ,X\right)  $ when $X$ is a Banach space are standard. For
instance, we will often use the spaces%
\[
L^{2}\left(  \left(  0,T\right)  ,W_{X}^{k,2}\left(  \mathbb{G}\right)
\right)
\]
(for $k=1,2,3,...$) normed with%
\[
\left\Vert f\right\Vert _{L^{2}\left(  \left(  0,T\right)  ,W_{X}^{k,2}\left(
\mathbb{G}\right)  \right)  }=\left\Vert f\right\Vert _{L^{2}\left(
\mathbb{G}_{T}\right)  }+\sum_{j=1}^{k}\sum_{i_{1},...,i_{j}\in\left\{
1,...,q\right\}  }\left\Vert X_{i_{1}}X_{i_{2}}...X_{i_{j}}f\right\Vert
_{L^{2}\left(  \mathbb{G}_{T}\right)  }%
\]
and the analogous spaces $L^{2}\left(  \left(  0,T\right)  ,W_{X^{R}}%
^{k,2}\left(  \mathbb{G}\right)  \right)  $. 

We will say that $u\in L^{2}\left(  \left(  0,T\right)  ,W_{X,loc}%
^{k,2}\left(  \mathbb{G}\right)  \right)  $ when for every $\zeta\in
C_{0}^{\infty}\left(  \mathbb{G}\right)  $ we have $u\zeta\in L^{2}\left(
\left(  0,T\right)  ,W_{X}^{k,2}\left(  \mathbb{G}\right)  \right)  $.

For a function $f\in L^{2}\left(  \left(  0,T\right)  ,W_{X}^{1,2}\left(
\mathbb{G}\right)  \right)  $ we will also use the shorthand notation%
\[
\left\Vert \nabla_{X}f\right\Vert _{L^{2}\left(  \mathbb{G}_{T}\right)  }%
^{2}=\sum_{i=1}^{q}\left\Vert X_{i}f\right\Vert _{L^{2}\left(  \mathbb{G}%
_{T}\right)  }^{2},
\]
with the analogous meaning for $\left\Vert \nabla_{X^{R}}f\right\Vert
_{L^{2}\left(  \mathbb{G}_{T}\right)  }^{2}.$

\begin{definition}
We say that a function $u$ belongs to $L^{2}\left(  \left(  0,T\right)
,C^{\infty}\left(  \overline{\Omega}\right)  \right)  $ if $u\in L^{2}\left(
\left(  0,T\right)  ,C^{k}\left(  \overline{\Omega}\right)  \right)  $ for
every $k=0,1,2,...$ Explicitly, this implies that%
\[
\int_{0}^{T}\left\Vert u\left(  t,\cdot\right)  \right\Vert _{C^{k}\left(
\overline{\Omega}\right)  }^{2}dt<\infty\text{ for every }k=0,1,2,...
\]

We say that a function $u$ belongs to $C^{0}\left(  \left[  0,T\right]
,C^{\infty}\left(  \overline{\Omega}\right)  \right)  $ if $u\in C^{0}\left(
\left[  0,T\right]  ,C^{k}\left(  \overline{\Omega}\right)  \right)  \ $for
every $k=0,1,2,...$
\end{definition}

\begin{definition}
We let:%
\begin{align*}
\mathcal{H} &  =L^{2}\left(  \left(  0,T\right)  ,W_{X}^{2,2}\left(
\mathbb{G}\right)  \right)  \cap W^{1,2}\left(  \left(  0,T\right)
,L^{2}\left(  \mathbb{G}\right)  \right)  \\
&  =\left\{  u\in L^{2}\left(  \mathbb{G}_{T}\right)  :u_{t},X_{i}u,X_{i}%
X_{j}u\in L^{2}\left(  \mathbb{G}_{T}\right)  \right\}  .
\end{align*}
Note that $\mathcal{H}\subset C^{0}\left(  \left[  0,T\right]  ,L^{2}\left(
\mathbb{G}\right)  \right)  $, so that for $u\in\mathcal{H}$ and $t\in\left[
0,T\right]  $, $u\left(  t,\cdot\right)  $ is a well defined element of
$L^{2}\left(  \mathbb{G}\right)  $.

We will also use%
\[
\mathcal{H}_{0}=\left\{  u\in W^{1,2}\left(  \left(  0,T\right)  ,L_{loc}%
^{2}\left(  \mathbb{G}\right)  \right)  :\forall\phi\in C_{0}^{\infty}\left(
\mathbb{G}\right)  \text{ }u\phi\in\mathcal{H}\text{ and }\left(
u\phi\right)  \left(  0,\cdot\right)  =0\right\}  .
\]

\end{definition}

\begin{proposition}
Let $\mathcal{L}$ be as in (\ref{heat}) and let (\ref{ellipticity}) be in
force. Then for every $u\in\mathcal{H}$ such that $u\left(  0,\cdot\right)
=0$ we have%
\begin{equation}
\left\Vert \nabla_{X}u\right\Vert _{L^{2}\left(  \mathbb{G}_{T}\right)
}\leqslant c_{\nu}\left\{  \left\Vert \mathcal{L}u\right\Vert _{L^{2}\left(
\mathbb{G}_{T}\right)  }+\left\Vert u\right\Vert _{L^{2}\left(  \mathbb{G}%
_{T}\right)  }\right\}  \label{basic Sobolev}%
\end{equation}
for a constant $c_{\nu}$ only depending on the ellipticity constant $\nu$ in
(\ref{ellipticity}).
\end{proposition}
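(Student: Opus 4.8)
The plan is to prove this as a standard first-order energy (Caccioppoli-type) estimate, obtained by testing the equation against $u$ itself and integrating over $\mathbb{G}_{T}$. Writing $F=\mathcal{L}u=\sum_{i,j}a_{ij}\left(t\right)X_{i}X_{j}u-\partial_{t}u$, I would study the quantity $\int_{\mathbb{G}_{T}}Fu$. The whole argument rests on two integrations by parts, one in space and one in time, together with the uniform positivity (\ref{ellipticity}).

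For the spatial term I would use the relation $X_{i}^{\ast}=-X_{i}$ recorded in (\ref{transposed}) to write, for a.e. fixed $t$,
\[
\int_{\mathbb{G}}\left(X_{i}X_{j}u\right)u\,dx=-\int_{\mathbb{G}}\left(X_{j}u\right)\left(X_{i}u\right)dx.
\]
This is legitimate because $u\left(t,\cdot\right)\in W_{X}^{2,2}\left(\mathbb{G}\right)$ for a.e. $t$, which is exactly the spatial regularity encoded in $\mathcal{H}$, so that both $u\left(t,\cdot\right)$ and $X_{j}u\left(t,\cdot\right)$ lie in $W_{X}^{1,2}\left(\mathbb{G}\right)$ and (\ref{transposed}) extends to such functions by the usual density argument, with no boundary contribution. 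Summing against $a_{ij}\left(t\right)$ and applying (\ref{ellipticity}) pointwise with $\xi=\nabla_{X}u\left(t,x\right)$ gives
\[
\int_{\mathbb{G}}\Big(\sum_{i,j}a_{ij}\left(t\right)X_{i}X_{j}u\Big)u\,dx=-\sum_{i,j}a_{ij}\left(t\right)\int_{\mathbb{G}}\left(X_{i}u\right)\left(X_{j}u\right)dx\leqslant-\nu\int_{\mathbb{G}}\left\vert\nabla_{X}u\right\vert^{2}dx.
\]
The essential structural point, which is precisely what makes the merely measurable dependence on time harmless, is that $a_{ij}\left(t\right)$ carries no $x$-dependence and hence factors out of the spatial integration by parts untouched.

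For the time term, since $u\in W^{1,2}\left(\left(0,T\right),L^{2}\left(\mathbb{G}\right)\right)$ the map $t\mapsto\left\Vert u\left(t,\cdot\right)\right\Vert_{L^{2}\left(\mathbb{G}\right)}^{2}$ is absolutely continuous with derivative $2\left\langle\partial_{t}u,u\right\rangle_{L^{2}}$, so the fundamental theorem of calculus and the hypothesis $u\left(0,\cdot\right)=0$ give $\int_{\mathbb{G}_{T}}\left(\partial_{t}u\right)u=\tfrac{1}{2}\left\Vert u\left(T,\cdot\right)\right\Vert_{L^{2}\left(\mathbb{G}\right)}^{2}\geqslant0$. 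Combining the two displays yields
\[
\nu\left\Vert\nabla_{X}u\right\Vert_{L^{2}\left(\mathbb{G}_{T}\right)}^{2}+\tfrac{1}{2}\left\Vert u\left(T,\cdot\right)\right\Vert_{L^{2}\left(\mathbb{G}\right)}^{2}\leqslant-\int_{\mathbb{G}_{T}}Fu\leqslant\left\Vert F\right\Vert_{L^{2}\left(\mathbb{G}_{T}\right)}\left\Vert u\right\Vert_{L^{2}\left(\mathbb{G}_{T}\right)}
\]
by Cauchy--Schwarz. Dropping the nonnegative boundary term, applying Young's inequality to the right-hand side, and taking square roots then produces (\ref{basic Sobolev}) with $c_{\nu}$ depending only on $\nu$.

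I do not expect any serious obstacle: this is the classical energy estimate, and the dependence of the coefficients on time alone is what keeps it elementary. The only points requiring care are the rigorous justification of the spatial integration by parts over the whole of $\mathbb{G}$ (where $u$ need not have compact support), which is handled by the density argument inherent in the definition of $W_{X}^{2,2}\left(\mathbb{G}\right)$ together with the identity (\ref{transposed}), and the absolute continuity of $t\mapsto\left\Vert u\left(t,\cdot\right)\right\Vert_{L^{2}}^{2}$, which is the standard Lions-type lemma and is available here because $\partial_{t}u\in L^{2}\left(\mathbb{G}_{T}\right)$ directly.
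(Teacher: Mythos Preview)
Your proposal is correct and follows essentially the same approach as the paper: both test the equation against $u$, integrate by parts in space via $X_{i}^{\ast}=-X_{i}$ to produce the quadratic form $\sum a_{ij}\int X_{i}u\,X_{j}u$, integrate the time term to $\tfrac{1}{2}\left\Vert u(T,\cdot)\right\Vert_{L^{2}}^{2}$, apply the ellipticity bound, and finish with Cauchy--Schwarz. Your version is in fact slightly more careful than the paper's in flagging the density argument behind the spatial integration by parts and the absolute continuity of $t\mapsto\left\Vert u(t,\cdot)\right\Vert_{L^{2}}^{2}$.
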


\begin{proof}
For $u\in\mathcal{H}$ we have, recalling that $X_{i}^{\ast}=-X_{i}$ (see
(\ref{transposed})):%
\begin{align}
&  -\int\int_{\mathbb{G}_{T}}\left(  u\mathcal{L}u\right)  dtdx=\int%
\int_{\mathbb{G}_{T}}\left(  u\partial_{t}u\right)  dtdx-\int\int%
_{\mathbb{G}_{T}}\left(  u\sum_{i,j=1}^{q}a_{ij}\left(  t\right)  X_{i}%
X_{j}u\right)  dtdx\nonumber\\
&  =\frac{1}{2}\int_{\mathbb{G}}\left(  \int_{0}^{T}\partial_{t}\left(
u^{2}\right)  dt\right)  dx-\sum_{i,j=1}^{q}\int_{0}^{T}a_{ij}\left(
t\right)  \left(  \int_{\mathbb{G}}\left(  uX_{i}X_{j}u\right)  dx\right)
dt\nonumber\\
&  =\frac{1}{2}\int_{\mathbb{G}}\left(  u^{2}\left(  T,x\right)  -u^{2}\left(
0,x\right)  \right)  dx+\sum_{i,j=1}^{q}\int_{0}^{T}a_{ij}\left(  t\right)
\left(  \int_{\mathbb{G}}\left(  X_{i}uX_{j}u\right)  dx\right)
dt.\label{first identity}%
\end{align}
Since
\[
\sum_{i,j=1}^{q}\int_{0}^{T}a_{ij}\left(  t\right)  \left(  \int_{\mathbb{G}%
}\left(  X_{i}uX_{j}u\right)  dx\right)  dt\geq\nu\sum_{i=1}^{q}\int_{0}%
^{T}\int_{\mathbb{G}}\left(  X_{i}u\right)  ^{2}dxdt
\]
we have%
\begin{equation}
\left\Vert \nabla_{X}u\right\Vert _{L^{2}\left(  \mathbb{G}_{T}\right)  }%
^{2}\leqslant\frac{1}{\nu}\left\Vert \mathcal{L}u\right\Vert _{L^{2}\left(
\mathbb{G}_{T}\right)  }\left\Vert u\right\Vert _{L^{2}\left(  \mathbb{G}%
_{T}\right)  }+\frac{1}{2\nu}\left\Vert u\left(  0,\cdot\right)  \right\Vert
_{L^{2}\left(  \mathbb{G}\right)  }.\label{basic}%
\end{equation}
In particular, for $u$ vanishing on $t=0$ we get (\ref{basic Sobolev}).
\end{proof}

In the following of this section we will recall and adapt several definitions
and arguments taken from \cite{BB1}. The reader is referred to that paper for
some details.

\begin{definition}
[Finite difference operators]For every $h\in\mathbb{G}$ and function $f$
defined in $\mathbb{G}$, let us define the operators:%
\begin{align*}
\Delta_{h}f\left(  x\right)   &  =f\left(  x\circ h\right)  -f\left(  x\right)
\\
\widetilde{\Delta}_{h}f\left(  x\right)   &  =f\left(  h\circ x\right)
-f\left(  x\right)  .
\end{align*}
Whenever the function $f$ also depends on $t$, we will simply write%
\[
\Delta_{h}f\left(  t,x\right)  =\Delta_{h}\left[  f\left(  t,\cdot\right)
\right]  \left(  x\right)
\]
and analogously for $\widetilde{\Delta}_{h}f\left(  t,x\right)  .$
\end{definition}

\begin{definition}
For $m=1,2,3,4,...$, let%
\begin{align*}
\Delta_{h}^{m}  &  =\underset{m\text{ times}}{\underbrace{\Delta_{h}\Delta
_{h}...\Delta_{h}}}.\\
\widetilde{\Delta}_{h}^{m}  &  =\underset{m\text{ times}%
}{\underbrace{\widetilde{\Delta}_{h}\widetilde{\Delta}_{h}...\widetilde{\Delta
}_{h}}}.
\end{align*}

Then, for $\alpha>0$ and $f\in L^{2}\left(  \mathbb{G}_{T}\right)  $ we define
the semi-norms%
\begin{align*}
\left\vert f\right\vert _{m,\alpha} &  =\sup\left\{  \frac{\left\Vert
\Delta_{h}^{m}f\right\Vert _{L^{2}\left(  \mathbb{G}_{T}\right)  }}{\left\Vert
h\right\Vert ^{\alpha}}:h=\operatorname{Exp}\left(  tX_{i}\right)  \text{
}\forall i=1,...,q,t\in\mathbb{R}:0<\left\Vert h\right\Vert \leqslant
1\right\}  \\
\left\vert f\right\vert _{m,\alpha}^{R} &  =\sup\left\{  \frac{\left\Vert
\widetilde{\Delta}_{h}^{m}f\right\Vert _{L^{2}\left(  \mathbb{G}_{T}\right)
}}{\left\Vert h\right\Vert ^{\alpha}}:h=\operatorname{Exp}\left(
tX_{i}\right)  \text{ }\forall i=1,...,q,t\in\mathbb{R}:0<\left\Vert
h\right\Vert \leqslant1\right\}  .
\end{align*}
We also set for convenience%
\begin{align*}
\left\vert f\right\vert _{0} &  =\left\vert f\right\vert _{0}^{R}=\left\Vert
f\right\Vert _{L^{2}\left(  \mathbb{G}_{T}\right)  }\\
\left\vert f\right\vert _{m} &  =\left\vert f\right\vert _{m,m}\\
\left\vert f\right\vert _{m}^{R} &  =\left\vert f\right\vert _{m,m}^{R}.
\end{align*}

\end{definition}

The relations between the above seminorms and Sobolev norms with respect to
vector fields are contained in the following two results, which can be derived
by \cite[Thm. 3.11, Prop.3.13]{BB1} simply integrating in $t$.

\begin{proposition}
\label{Thm caratterizzaz increm finiti}For $m=1,2,...$there exists $c=c\left(
m,\mathbb{G}\right)  $ such that, for every $f\in L^{2}\left(  \mathbb{G}%
_{T}\right)  $ we have:

1. If $f\in L^{2}\left(  \left(  0,T\right)  ,W_{X}^{m,2}\left(
\mathbb{G}\right)  \right)  $ then%
\begin{equation}
\sum_{k=0}^{m}\left\vert f\right\vert _{k}\leqslant c\left\Vert f\right\Vert
_{L^{2}\left(  \left(  0,T\right)  ,W_{X}^{m,2}\left(  \mathbb{G}\right)
\right)  }\label{1}%
\end{equation}
Analogously,

2. If $f\in L^{2}\left(  \left(  0,T\right)  ,W_{X^{R}}^{m,2}\left(
\mathbb{G}\right)  \right)  $ then%
\begin{equation}
\sum_{k=0}^{m}\left\vert f\right\vert _{k}^{R}\leqslant c\left\Vert
f\right\Vert _{L^{2}\left(  \left(  0,T\right)  ,W_{X^{R}}^{m,2}\left(
\mathbb{G}\right)  \right)  }.\label{2}%
\end{equation}

\end{proposition}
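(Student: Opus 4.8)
The plan is to reduce both statements to their purely spatial counterparts, established in \cite[Thm. 3.11, Prop. 3.13]{BB1}, by a slicing-and-integration argument in the time variable. Recall that those results assert, for a function $g$ depending only on $x\in\mathbb{G}$, the bound
\[
\sum_{k=0}^{m}\sup\left\{  \frac{\left\Vert \Delta_{h}^{k}g\right\Vert
_{L^{2}\left(  \mathbb{G}\right)  }}{\left\Vert h\right\Vert ^{k}}:h=\operatorname{Exp}\left(  sX_{i}\right)  ,\ i=1,\dots,q,\ 0<\left\Vert h\right\Vert \leqslant1\right\}  \leqslant c\,\left\Vert g\right\Vert _{W_{X}^{m,2}\left(  \mathbb{G}\right)  },
\]
with $c=c\left(  m,\mathbb{G}\right)  $, and the analogue with $\widetilde{\Delta}_{h}^{k}$ and $\left\Vert g\right\Vert _{W_{X^{R}}^{m,2}\left(  \mathbb{G}\right)  }$. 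In particular, for each fixed admissible $h$ and every integer $0\leqslant k\leqslant m$ one has the uniform-in-$h$ estimate $\left\Vert \Delta_{h}^{k}g\right\Vert _{L^{2}\left(  \mathbb{G}\right)  }\leqslant c\,\left\Vert h\right\Vert ^{k}\left\Vert g\right\Vert _{W_{X}^{m,2}\left(  \mathbb{G}\right)  }$.

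First I would fix $f\in L^{2}\left(  \left(  0,T\right)  ,W_{X}^{m,2}\left(  \mathbb{G}\right)  \right)  $, so that $f\left(  t,\cdot\right)  \in W_{X}^{m,2}\left(  \mathbb{G}\right)  $ for a.e. $t\in\left(  0,T\right)  $, and fix an admissible increment $h=\operatorname{Exp}\left(  sX_{i}\right)  $ with $0<\left\Vert h\right\Vert \leqslant1$. Since $\Delta_{h}^{k}$ acts only on the space variable, for a.e. $t$ the spatial estimate applies to $g=f\left(  t,\cdot\right)  $ and gives $\left\Vert \Delta_{h}^{k}f\left(  t,\cdot\right)  \right\Vert _{L^{2}\left(  \mathbb{G}\right)  }^{2}\leqslant c^{2}\left\Vert h\right\Vert ^{2k}\left\Vert f\left(  t,\cdot\right)  \right\Vert _{W_{X}^{m,2}\left(  \mathbb{G}\right)  }^{2}$. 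Because $h$ is held fixed, no interchange of supremum and integral is needed: I integrate this over $t\in\left(  0,T\right)  $, recognize $\int_{0}^{T}\left\Vert \Delta_{h}^{k}f\left(  t,\cdot\right)  \right\Vert _{L^{2}\left(  \mathbb{G}\right)  }^{2}dt=\left\Vert \Delta_{h}^{k}f\right\Vert _{L^{2}\left(  \mathbb{G}_{T}\right)  }^{2}$ by Tonelli, and obtain
\[
\frac{\left\Vert \Delta_{h}^{k}f\right\Vert _{L^{2}\left(  \mathbb{G}_{T}\right)  }^{2}}{\left\Vert h\right\Vert ^{2k}}\leqslant c^{2}\int_{0}^{T}\left\Vert f\left(  t,\cdot\right)  \right\Vert _{W_{X}^{m,2}\left(  \mathbb{G}\right)  }^{2}dt.
\]
The right-hand side is independent of $h$, so taking the supremum over all admissible $h$ bounds $\left\vert f\right\vert _{k}$ by $c\big(\int_{0}^{T}\left\Vert f\left(  t,\cdot\right)  \right\Vert _{W_{X}^{m,2}\left(  \mathbb{G}\right)  }^{2}dt\big)^{1/2}$ for each $k$. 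Summing over $k=0,1,\dots,m$ (the case $k=0$ being trivial, since $\left\vert f\right\vert _{0}=\left\Vert f\right\Vert _{L^{2}\left(  \mathbb{G}_{T}\right)  }$), and using that this last quantity is equivalent, up to a constant depending only on $m$ and $q$, to the norm $\left\Vert f\right\Vert _{L^{2}\left(  \left(  0,T\right)  ,W_{X}^{m,2}\left(  \mathbb{G}\right)  \right)  }$ defined in the text, yields (\ref{1}). Statement (\ref{2}) follows verbatim, replacing $\Delta_{h}$ by $\widetilde{\Delta}_{h}$ and the left-invariant data by its right-invariant counterpart.

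I do not expect a genuine obstacle here: the whole content is in \cite{BB1}, and the time variable plays a passive role. The only two points requiring a little care are, first, the observation that the supremum defining each seminorm may legitimately be taken after integrating in $t$ (which works precisely because the spatial bound is uniform in $h$, the entire $h$-dependence being absorbed into the explicit factor $\left\Vert h\right\Vert ^{k}$), and second, the elementary equivalence between the two natural norms on $L^{2}\left(  \left(  0,T\right)  ,W_{X}^{m,2}\left(  \mathbb{G}\right)  \right)  $—the $L^{2}$-in-time of the spatial Sobolev norm versus the sum over multi-indices of the $L^{2}\left(  \mathbb{G}_{T}\right)  $-norms of the derivatives—which holds because only finitely many derivative terms are involved.
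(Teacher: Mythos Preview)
Your proof is correct and follows exactly the approach indicated in the paper, which simply states that the result ``can be derived by \cite[Thm. 3.11, Prop. 3.13]{BB1} simply integrating in $t$'' without giving further details. Your write-up fleshes out precisely this slicing-and-integration argument, including the observation that the supremum over $h$ commutes with the time integration because the spatial constant is uniform in $h$.
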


\begin{proposition}
\label{Lemma 2 caratt incr fin}There exists $C=C\left(  \mathbb{G}\right)  $
such that for every $f\in L^{2}\left(  \mathbb{G}_{T}\right)  $ we
have:\newline1. If $\left\vert f\right\vert _{1}<\infty$ then $f\in
L^{2}\left(  \left(  0,T\right)  ,W_{X}^{1,2}\left(  \mathbb{G}\right)
\right)  $, with%
\[
\left\Vert \nabla_{X}f\right\Vert _{L^{2}\left(  \mathbb{G}_{T}\right)
}\leqslant C\left\vert f\right\vert _{1}.
\]
2. If $\left\vert f\right\vert _{1}^{R}<\infty$ then $f\in L^{2}\left(
\left(  0,T\right)  ,W_{X^{R}}^{1,2}\left(  \mathbb{G}\right)  \right)  $,
with%
\[
\left\Vert \nabla_{X^{R}}f\right\Vert _{L^{2}\left(  \mathbb{G}_{T}\right)
}\leqslant C\left\vert f\right\vert _{1}^{R}.
\]

\end{proposition}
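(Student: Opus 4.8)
The plan is to prove part 1 by a difference-quotient (Nirenberg-type) argument carried out directly in the Hilbert space $L^{2}\left( \mathbb{G}_{T}\right) $; part 2 is then identical after replacing left translations, left invariant fields and $\Delta $ by their right counterparts. Fix $i\in\{1,\dots,q\}$ and, for $\tau\in\mathbb{R}$ with $0<|\tau|$ small, set $h_{\tau}=\operatorname{Exp}(\tau X_{i})$. Since $X_{i}$ is $1$-homogeneous, $D_{\lambda}$ is a group automorphism and $\operatorname{Exp}$ intertwines $D_{\lambda}$ with multiplication by $\lambda$ on $\mathfrak{g}$, one has $h_{\tau}=D_{|\tau|}(\operatorname{Exp}(\pm X_{i}))$ according to the sign of $\tau$, whence $\|h_{\tau}\|\asymp|\tau|$ with constants depending only on $\mathbb{G}$ and on the chosen homogeneous norm. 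From the very definition of $|f|_{1}=|f|_{1,1}$ we get, for all such $\tau$,
\[
\left\Vert \frac{\Delta_{h_{\tau}}f}{\tau}\right\Vert _{L^{2}(\mathbb{G}_{T})}=\frac{\|h_{\tau}\|}{|\tau|}\cdot\frac{\|\Delta_{h_{\tau}}f\|_{L^{2}(\mathbb{G}_{T})}}{\|h_{\tau}\|}\leqslant c(\mathbb{G})\,|f|_{1},
\]
so the family of difference quotients $\{\Delta_{h_{\tau}}f/\tau\}$ is bounded in $L^{2}(\mathbb{G}_{T})$ uniformly in $\tau$.

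Since $L^{2}(\mathbb{G}_{T})$ is a Hilbert space, bounded sets are weakly sequentially precompact: there exist a sequence $\tau_{n}\to0$ and $g_{i}\in L^{2}(\mathbb{G}_{T})$ with $\Delta_{h_{\tau_{n}}}f/\tau_{n}\rightharpoonup g_{i}$ weakly, and by weak lower semicontinuity of the norm $\|g_{i}\|_{L^{2}(\mathbb{G}_{T})}\leqslant c(\mathbb{G})\,|f|_{1}$. It is cleanest to invoke compactness once in $L^{2}(\mathbb{G}_{T})$ rather than slice by slice in $t$: the seminorm $|f|_{1}$ only controls $\int_{0}^{T}\|\Delta_{h}f(t,\cdot)\|_{L^{2}(\mathbb{G})}^{2}\,dt$ for each fixed $h$, and $\int_{0}^{T}$ and $\sup_{h}$ do not interchange, so one cannot first apply the spatial statement \cite[Prop.\,3.13]{BB1} at a.e.\ fixed $t$ and then integrate. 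This is the precise meaning of \emph{integrating in $t$}: the increment $\Delta_{h_{\tau}}$ acts only in the space variable, so the spatial change of variables and integration by parts below are performed for a.e.\ $t$ and integrated against $dt$, while the passage from bounded difference quotients to a weak derivative happens in the full space.

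It remains to identify $g_{i}$ with the weak derivative $X_{i}f$. Let $\varphi\in C_{0}^{\infty}(\mathbb{G}_{T})$. For a.e.\ fixed $t$ the change of variable $x\mapsto x\circ h_{\tau_{n}}$ preserves Lebesgue measure (a bi-invariant Haar measure on the nilpotent group $\mathbb{G}$), so
\[
\int\int_{\mathbb{G}_{T}}\frac{\Delta_{h_{\tau_{n}}}f}{\tau_{n}}\,\varphi\,dt\,dx=\int\int_{\mathbb{G}_{T}}f\,\frac{\Delta_{h_{\tau_{n}}^{-1}}\varphi}{\tau_{n}}\,dt\,dx,
\]
where $h_{\tau_{n}}^{-1}=\operatorname{Exp}(-\tau_{n}X_{i})$. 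Because $\varphi$ is smooth with compact support and $s\mapsto x\circ\operatorname{Exp}(sX_{i})$ is the flow of the left invariant field $X_{i}$, the quotient $\Delta_{h_{\tau_{n}}^{-1}}\varphi/\tau_{n}$ converges to $-X_{i}\varphi$ uniformly on $\mathbb{G}_{T}$. Passing to the limit we obtain
\[
\int\int_{\mathbb{G}_{T}}g_{i}\,\varphi\,dt\,dx=-\int\int_{\mathbb{G}_{T}}f\,X_{i}\varphi\,dt\,dx
\]
for every $\varphi\in C_{0}^{\infty}(\mathbb{G}_{T})$; since $X_{i}^{\ast}=-X_{i}$ (see \eqref{transposed}), this is exactly the statement that $g_{i}=X_{i}f$ in the weak sense. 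Hence $X_{i}f\in L^{2}(\mathbb{G}_{T})$ with $\|X_{i}f\|_{L^{2}(\mathbb{G}_{T})}\leqslant c(\mathbb{G})\,|f|_{1}$; summing over $i=1,\dots,q$ and adjusting the constant yields $f\in L^{2}((0,T),W_{X}^{1,2}(\mathbb{G}))$ and the estimate $\|\nabla_{X}f\|_{L^{2}(\mathbb{G}_{T})}\leqslant C|f|_{1}$.

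For part 2 the same argument applies verbatim to $\widetilde{\Delta}_{h_{\tau}}$: the one parameter subgroup $\operatorname{Exp}(\tau X_{i})$ is the integral curve through the origin of both $X_{i}$ and $X_{i}^{R}$, so $s\mapsto\operatorname{Exp}(sX_{i})\circ x$ is the flow of the right invariant field $X_{i}^{R}$; the change of variable $x\mapsto h_{\tau_{n}}\circ x$ again preserves Lebesgue measure, and the identity $(X_{i}^{R})^{\ast}=-X_{i}^{R}$ (valid since left translations are measure preserving, hence $X_{i}^{R}$ is divergence free) identifies the weak limit with $X_{i}^{R}f$. The only genuinely structural inputs, and therefore the main points to check with care, are the comparability $\|h_{\tau}\|\asymp|\tau|$ coming from $1$-homogeneity and the measure-preservation/divergence-free property underlying both the change of variables and the transpose rule; everything else is the standard passage from uniformly bounded difference quotients to a weak derivative.
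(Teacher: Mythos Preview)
Your proof is correct and is exactly the argument the paper has in mind: the paper does not give a proof but simply says the result ``can be derived by \cite[Thm.~3.11, Prop.~3.13]{BB1} simply integrating in $t$'', and what you have written is precisely the Nirenberg difference-quotient argument of \cite[Prop.~3.13]{BB1} rerun in the Hilbert space $L^{2}(\mathbb{G}_{T})$ instead of $L^{2}(\mathbb{G})$. Your remark that one cannot literally apply the spatial statement for a.e.\ $t$ and then integrate---because $|f|_{1}$ controls $\int_{0}^{T}\|\Delta_{h}f(t,\cdot)\|_{L^{2}}^{2}\,dt$ for each fixed $h$, and $\sup_{h}$ does not commute with $\int_{0}^{T}$---is a genuine and useful clarification of what the paper's phrase ``integrating in $t$'' actually means.
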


The following bound instead links the $L^{2}\left(  \left(  0,T\right)
,W_{X}^{1,2}\left(  \mathbb{G}\right)  \right)  $ norm with the operators
$\widetilde{\Delta}_{h}$:

\begin{proposition}
Let $\Omega$ be a bounded domain in $\mathbb{G}$. There exists $c=c\left(
\Omega,\mathbb{G}\right)  $ such that for every $u\in L^{2}\left(  \left(
0,T\right)  ,W_{X}^{1,2}\left(  \mathbb{G}\right)  \right)  $ with
$\operatorname{sprt}u\left(  t,\cdot\right)  \subset\Omega$ for every
$t\in\left(  0,T\right)  $ we have%
\[
\left\Vert \widetilde{\Delta}_{h}u\right\Vert _{L^{2}\left(  \mathbb{G}%
_{T}\right)  }\leqslant c\left\Vert h\right\Vert ^{1/s}\left\Vert \nabla
_{X}u\right\Vert _{L^{2}\left(  \mathbb{G}_{T}\right)  }.
\]
(Recall that $s$ is the step of the Lie algebra).
\end{proposition}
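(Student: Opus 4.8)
The plan is to connect the points $x$ and $h\circ x$ by a horizontal path built from integral curves of the generators $X_{1},\dots,X_{q}$, whose total length is controlled by $\left\Vert h\right\Vert ^{1/s}$, and to integrate $\nabla_{X}u$ along it. First I would dispose of the range $\left\Vert h\right\Vert \geqslant 1$: there $\left\Vert \widetilde{\Delta}_{h}u\right\Vert _{L^{2}\left( \mathbb{G}_{T}\right) }\leqslant 2\left\Vert u\right\Vert _{L^{2}\left( \mathbb{G}_{T}\right) }$, and since $u\left( t,\cdot\right) $ is supported in the bounded set $\Omega$, a subelliptic Poincar\'{e} inequality gives $\left\Vert u\left( t,\cdot\right) \right\Vert _{L^{2}}\leqslant c_{\Omega }\left\Vert \nabla _{X}u\left( t,\cdot\right) \right\Vert _{L^{2}}$; integrating in $t$ and using $\left\Vert h\right\Vert ^{1/s}\geqslant 1$ settles this case. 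So I may assume $\left\Vert h\right\Vert \leqslant 1$.

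The exponent $1/s$ comes from a conjugation estimate. Writing $h\circ x=x\circ h_{x}$ with $h_{x}=x^{-1}\circ h\circ x$, I would first show
\[
\left\Vert h_{x}\right\Vert \leqslant c_{\Omega }\left\Vert h\right\Vert ^{1/s}\qquad \left( x\in\Omega ,\ \left\Vert h\right\Vert \leqslant 1\right) .
\]
This rests on the nilpotency of $\mathfrak{g}$: in exponential coordinates $\operatorname{Ad}_{x^{-1}}$ is a polynomial in $\operatorname{ad}_{\log x}$ of degree $<s$, so for $x\in\Omega$ the layer-$m$ component of $\log h_{x}$ is $O_{\Omega }\left( \left\Vert h\right\Vert \right) $ (its leading part is linear in the first-layer component of $\log h$, of size $\left\Vert h\right\Vert $); since the $m$-th layer carries homogeneity weight $m$, this yields $\left\Vert h_{x}\right\Vert \lesssim _{\Omega }\max _{m}\left\Vert h\right\Vert ^{1/m}=\left\Vert h\right\Vert ^{1/s}$ for $\left\Vert h\right\Vert \leqslant 1$. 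By the standard connectivity of Carnot groups, $h_{x}$ then factors as a product of a fixed number $M=M\left( \mathbb{G}\right) $ of horizontal exponentials,
\[
h_{x}=\operatorname{Exp}\left( \tau _{1}X_{j_{1}}\right) \circ\cdots\circ\operatorname{Exp}\left( \tau _{M}X_{j_{M}}\right) ,\qquad \left\vert \tau _{k}\right\vert \leqslant c_{\Omega }\left\Vert h\right\Vert ^{1/s}.
\]

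With this factorization I would telescope by \emph{right} translations. Setting $q_{0}=e$ and $q_{k}=\operatorname{Exp}\left( \tau _{1}X_{j_{1}}\right) \circ\cdots\circ\operatorname{Exp}\left( \tau _{k}X_{j_{k}}\right) $, each increment is a genuine right-translation difference and is therefore governed by a single \emph{left}-invariant derivative:
\[
\widetilde{\Delta }_{h}u\left( x\right) =\sum _{k=1}^{M}\left[ u\left( x\circ q_{k}\right) -u\left( x\circ q_{k-1}\right) \right] =\sum _{k=1}^{M}\int _{0}^{\tau _{k}}\left( X_{j_{k}}u\right) \left( x\circ q_{k-1}\circ\operatorname{Exp}\left( \sigma X_{j_{k}}\right) \right) d\sigma .
\]
This is exactly why the left gradient, and not the right one, appears: the deep direction has been pushed into the \emph{increment} $h_{x}$ rather than into the vector fields. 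Cauchy--Schwarz in $\sigma $ and in $k$ gives, with $\Phi _{k,\sigma }\left( x\right) =x\circ q_{k-1}\circ\operatorname{Exp}\left( \sigma X_{j_{k}}\right) $,
\[
\left\vert \widetilde{\Delta }_{h}u\left( x\right) \right\vert ^{2}\leqslant M\sum _{k=1}^{M}\left\vert \tau _{k}\right\vert \int _{0}^{\tau _{k}}\left\vert \left( X_{j_{k}}u\right) \left( \Phi _{k,\sigma }\left( x\right) \right) \right\vert ^{2}d\sigma .
\]

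The integration in $x$ (and $t$) is the main obstacle. If the $q_{k-1}$ were independent of $x$, each $\Phi _{k,\sigma }$ would be a right translation, hence Lebesgue-measure preserving; integrating the last inequality over $\Omega $ and inserting $\sum _{k}\left\vert \tau _{k}\right\vert ^{2}\leqslant c_{\Omega }\left\Vert h\right\Vert ^{2/s}$ would immediately give $\left\Vert \widetilde{\Delta }_{h}u\right\Vert _{L^{2}\left( \mathbb{G}_{T}\right) }^{2}\leqslant c_{\Omega }\left\Vert h\right\Vert ^{2/s}\left\Vert \nabla _{X}u\right\Vert _{L^{2}\left( \mathbb{G}_{T}\right) }^{2}$, which is the claim. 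The difficulty is that $q_{k-1}=q_{k-1}\left( x\right) $, since the factorization of the conjugate depends on $x$ (and, for the deep factors, only through square roots of polynomials, so it need not be Lipschitz). I would therefore control the change of variables $y=\Phi _{k,\sigma }\left( x\right) $ directly: because all increments $q_{k-1}\circ\operatorname{Exp}\left( \sigma X_{j_{k}}\right) $ have norm $\lesssim \left\Vert h\right\Vert ^{1/s}\leqslant 1$, the maps $\Phi _{k,\sigma }$ stay inside a fixed bounded $\Omega ^{\prime }\Supset\Omega $, so that $\left( X_{j_{k}}u\right) \circ\Phi _{k,\sigma }$ only probes $\nabla _{X}u$ on $\Omega ^{\prime }$ (this is what forces the constant to depend on $\Omega $). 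What remains is a uniform change-of-variables bound: $\int _{\Omega }\left\vert \left( X_{j_{k}}u\right) \left( \Phi _{k,\sigma }\left( x\right) \right) \right\vert ^{2}dx\leqslant c_{\Omega }\left\Vert X_{j_{k}}u\right\Vert _{L^{2}\left( \Omega ^{\prime }\right) }^{2}$ for $\left\Vert h\right\Vert \leqslant 1$, $x\in\Omega $, $0\leqslant\sigma \leqslant\left\vert \tau _{k}\right\vert $. I expect establishing this bound, via the area formula (a lower bound for the Jacobian of $\Phi _{k,\sigma }$ together with bounded multiplicity off a null set, or alternatively by regularizing the factorization and passing to the limit), to be the technical heart of the argument; once it is in hand, the power count above closes the estimate and the remaining manipulations are bookkeeping.
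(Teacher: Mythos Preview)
Your proposal contains the two correct ingredients---the conjugation estimate $\|x^{-1}\circ h\circ x\|\leqslant C_\Omega\|h\|^{1/s}$ is exactly where the exponent $1/s$ comes from, and a horizontal telescoping is the natural way to bring in $\nabla_X u$---and you have honestly isolated the one real obstacle: once the factorization $h_x=\operatorname{Exp}(\tau_1 X_{j_1})\circ\cdots\circ\operatorname{Exp}(\tau_M X_{j_M})$ is allowed to depend on $x$, the intermediate maps $\Phi_{k,\sigma}$ are no longer group translations and a change-of-variables bound is needed. This gap is genuine, and the remedies you sketch do not obviously close it. If the higher-layer part of $h_x$ is unwound by the standard commutator trick (parameters $\sim\sqrt{c(x)}$ with $c$ polynomial), the $\tau_k$ are not Lipschitz in $x$; worse, individual intermediate maps can be \emph{degenerate}: already in the Heisenberg group a step such as $z\mapsto z\circ\operatorname{Exp}(-z_2 X_2)$ collapses the $z_2$-axis, so the Jacobian is identically zero and the area-formula route fails outright. ``Regularize and pass to the limit'' does not help either, since the defect is geometric (a bad decomposition of a good map) rather than a matter of smoothness of $u$. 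The gap \emph{can} be repaired, but only by choosing the factorization more carefully---e.g.\ in step~$2$ one can split a center increment $(0,0,\tau x_2)$ as the commutator of $\operatorname{Exp}(\sqrt{|\tau|}X_1)$ (constant in $x$) and $\operatorname{Exp}(\pm x_2\sqrt{|\tau|}X_2)$ (smooth and linear in $x$), which keeps all $\Phi_{k,\sigma}$ smooth with Jacobian close to~$1$; carrying this out for a general Carnot group of step $s$ is exactly the missing work.

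The paper does not supply an independent argument here: its proof is literally ``apply \cite[Prop.~3.7, Lemma~3.8]{BB1} to $u(t,\cdot)$ and integrate in $t$''. In \cite{BB1} the estimate is organized so as to avoid your obstacle: one first proves the $x$-\emph{independent} bound $\|\Delta_g u\|_{L^2}\leqslant C\|g\|\,\|\nabla_X u\|_{L^2}$ for an arbitrary but fixed $g$ (factor $g$, not $g_x$; then every intermediate step is a genuine right translation with Jacobian~$1$), and the conjugation estimate enters as a separate lemma. The moral is to push the $x$-dependence into a single norm inequality rather than into an $x$-dependent horizontal path.
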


\begin{proof}
It is enough to apply to $u\left(  t,\cdot\right)  $ the computations made in
\cite[Prop.3.7, Lemma 3.8]{BB1} for functions in $W_{X}^{1,2}\left(
\mathbb{G}\right)  $ and then integrate on $\left(  0,T\right)  $.
\end{proof}

If $u\in$ $\mathcal{H}$, $u\left(  t,\cdot\right)  $ is supported in some
bounded domain $\Omega$ for every $t\in\left[  0,T\right]  $ and $u\left(
0,\cdot\right)  =0$, then by the previous Proposition and (\ref{basic Sobolev}%
) we get%
\[
\left\Vert \widetilde{\Delta}_{h}u\right\Vert _{L^{2}\left(  \mathbb{G}%
_{T}\right)  }\leqslant c_{\nu}\left\Vert h\right\Vert ^{1/s}\left\{
\left\Vert \mathcal{L}u\right\Vert _{L^{2}\left(  \mathbb{G}_{T}\right)
}+\left\Vert u\right\Vert _{L^{2}\left(  \mathbb{G}_{T}\right)  }\right\}
\]
that is%
\begin{equation}
\left\vert u\right\vert _{1,1/s}^{R}\leqslant c_{\nu}\left\{  \left\Vert
\mathcal{L}u\right\Vert _{L^{2}\left(  \mathbb{G}_{T}\right)  }+\left\Vert
u\right\Vert _{L^{2}\left(  \mathbb{G}_{T}\right)  }\right\}
.\label{first 1/s bound}%
\end{equation}

\begin{notation}
Henceforth, we will write
\[
\zeta_{0}\prec\zeta
\]
if $\zeta_{0},\zeta\in C_{0}^{\infty}\left(  \mathbb{G}\right)  $ such that
$0\leqslant\zeta_{0}\leqslant\zeta\leqslant1$ and $\zeta=1$ on
$\operatorname*{sprt}\zeta_{0}.$
\end{notation}

We have the following analog of Theorem 3.15 in \cite{BB1}:

\begin{theorem}
\label{Thm regularity localized}Let $\zeta_{0},\zeta\in C_{0}^{\infty}\left(
\mathbb{G}\right)  $ with $\zeta_{0}\prec\zeta$. For every $m\in\mathbb{N}$
the exists $c=c\left(  \zeta_{0},\zeta,m,\mathbb{G},\nu\right)  >0$ such that
if $u\in\mathcal{H}_{0}\ $then%
\begin{equation}
\left\vert \zeta_{0}u\right\vert _{m,m/s}^{R}\leqslant c\left(  \sum
_{j=0}^{m-1}\left\vert \zeta\mathcal{L}u\right\vert _{j}^{R}+\left\Vert \zeta
u\right\Vert _{2}\right)  , \label{reg loc}%
\end{equation}
whenever the right hand side is finite.
\end{theorem}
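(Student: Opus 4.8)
The plan is to argue by induction on $m$, exploiting throughout that $\mathcal{L}$ in (\ref{heat}) is \emph{left} invariant in the space variables while the seminorms $\vert\cdot\vert_{m,\alpha}^{R}$ are built from the left-translation differences $\widetilde{\Delta}_{h}=L_{h}-I$; consequently $\mathcal{L}$ commutes with $\widetilde{\Delta}_{h}$ (the $X_{i}X_{j}$ commute with $L_{h}$, while $\partial_{t}$ and the multipliers $a_{ij}(t)$ act only in the other variable), and for the same reason each $X_{i}$ commutes with $\widetilde{\Delta}_{h}$. For $m=1$ I would apply (\ref{first 1/s bound}) to $v=\zeta_{0}u$, which lies in $\mathcal{H}$, vanishes at $t=0$, and has support in a fixed bounded set. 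Writing $\mathcal{L}(\zeta_{0}u)=\zeta_{0}\mathcal{L}u+[\mathcal{L},\zeta_{0}]u$ and using $\zeta_{0}=\zeta_{0}\zeta$, the first term contributes at most $\vert\zeta\mathcal{L}u\vert_{0}^{R}$, while $[\mathcal{L},\zeta_{0}]$ is a first order (in space) operator with smooth coefficients supported where $\zeta\equiv1$, so its contribution is $\leqslant c\Vert\zeta u\Vert_{2}$. This yields (\ref{reg loc}) for $m=1$.

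For the inductive step $m-1\Rightarrow m$, I would first reduce the full seminorm to a single space gradient. Applying the estimate $\Vert\widetilde{\Delta}_{h}w\Vert_{L^{2}(\mathbb{G}_{T})}\leqslant c\Vert h\Vert^{1/s}\Vert\nabla_{X}w\Vert_{L^{2}(\mathbb{G}_{T})}$ (the Proposition preceding (\ref{first 1/s bound})) to $w=\widetilde{\Delta}_{h}^{m-1}(\zeta_{0}u)$ and dividing by $\Vert h\Vert^{m/s}$, one gets $\vert\zeta_{0}u\vert_{m,m/s}^{R}\leqslant c\sup_{h}\Vert h\Vert^{-(m-1)/s}\Vert\nabla_{X}\widetilde{\Delta}_{h}^{m-1}(\zeta_{0}u)\Vert_{L^{2}(\mathbb{G}_{T})}$, the sup over $h=\operatorname{Exp}(tX_{i})$, $0<\Vert h\Vert\leqslant1$. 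For $\Vert h\Vert$ bounded below this is harmless (estimate the gradient crudely by $c\Vert\zeta u\Vert_{2}$), so only small $\Vert h\Vert$ matter. Setting $v_{0}=\widetilde{\Delta}_{h}^{m-1}(\zeta_{0}u)\in\mathcal{H}$ with $v_{0}(0,\cdot)=0$, the energy identity (\ref{basic}) gives the \emph{quadratic} bound $\Vert\nabla_{X}v_{0}\Vert^{2}\leqslant c_{\nu}\Vert\mathcal{L}v_{0}\Vert\,\Vert v_{0}\Vert$; it is the quadratic form, rather than a linear one, that will make the argument close.

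Next I would expand, using $\mathcal{L}\widetilde{\Delta}_{h}^{m-1}=\widetilde{\Delta}_{h}^{m-1}\mathcal{L}$, as $\mathcal{L}v_{0}=\widetilde{\Delta}_{h}^{m-1}(\zeta_{0}\zeta\mathcal{L}u)+\widetilde{\Delta}_{h}^{m-1}([\mathcal{L},\zeta_{0}]u)$. A discrete Leibniz rule (the differences falling on the smooth cutoff producing factors $\Vert h\Vert^{j}\leqslant1$) bounds the genuine source term by $c\Vert h\Vert^{(m-1)/s}\sum_{j=0}^{m-1}\vert\zeta\mathcal{L}u\vert_{j}^{R}$, finite by hypothesis; similarly $\Vert v_{0}\Vert\leqslant\Vert h\Vert^{(m-1)/s}\vert\zeta_{0}u\vert_{m-1,(m-1)/s}^{R}$, controlled by the inductive hypothesis, and the zeroth order part of $[\mathcal{L},\zeta_{0}]u$ is handled the same way.

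The main obstacle is the first order part of the commutator, of the form $(X_{i}\zeta_{0})\,X_{j}u$: carrying one space derivative of $u$, its $m-1$ differences are \emph{not} of lower order than $\nabla_{X}v_{0}$, so no crude estimate can supply the needed gain $\Vert h\Vert^{(m-1)/s}$. Here I would introduce an intermediate cutoff $\zeta_{0}\prec\zeta_{1}\prec\zeta$, use that $X_{i}\zeta_{0}$ is supported where $\zeta_{1}\equiv1$ to insert $\zeta_{1}$ past the finitely many translates in $\widetilde{\Delta}_{h}^{m-1}$ (legitimate once $\Vert h\Vert$ is below a threshold depending on the cutoffs), and commute $X_{j}$ through $\widetilde{\Delta}_{h}^{m-1}$, reducing this part to $\Vert\nabla_{X}\widetilde{\Delta}_{h}^{m-1}(\zeta_{1}u)\Vert$, the same gradient with $\zeta_{0}$ enlarged to $\zeta_{1}$. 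Writing $D_{k}=\Vert\nabla_{X}\widetilde{\Delta}_{h}^{m-1}(\zeta_{k}u)\Vert$ and $M$ for the right hand side of (\ref{reg loc}), the energy bound then reads $D_{0}^{2}\leqslant c(\Vert h\Vert^{(m-1)/s}M+D_{1})\Vert h\Vert^{(m-1)/s}M$, and identically $D_{k}^{2}\leqslant c(\Vert h\Vert^{(m-1)/s}M+D_{k+1})\Vert h\Vert^{(m-1)/s}M$ along an infinite nested family $\zeta_{0}\prec\zeta_{1}\prec\zeta_{2}\prec\cdots\prec\zeta$ with uniformly (and crudely, $D_{k}\leqslant c\Vert\zeta u\Vert_{2}$) controlled cutoffs. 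Because the regenerated gradient appears under a square root, each iteration halves the exponent of the remainder; the resulting geometric series of exponents sums to $(m-1)/s$, giving $D_{0}\leqslant c\Vert h\Vert^{(m-1)/s}M$ and hence (\ref{reg loc}). This iterative absorption, powered by the quadratic form of (\ref{basic}) together with the commutation of $\mathcal{L}$ and of the $X_{i}$ with the right differences $\widetilde{\Delta}_{h}$, is the technical heart of the proof.
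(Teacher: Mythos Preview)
Your proposal is correct and follows essentially the same route as the paper's proof, which simply invokes the argument of \cite[Thm.~3.15]{BB1} together with the commutator identity (\ref{L(fg)}) and the commutation of $\mathcal{L}$ with $\widetilde{\Delta}_{h}$. In particular, the key technical point you identify --- that the \emph{quadratic} form of the energy bound (\ref{basic}) places the regenerated gradient $D_{k+1}$ under a square root, so that iterating along a nested family of cutoffs yields a convergent geometric series of exponents --- is precisely the mechanism of the proof in \cite{BB1}.
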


\begin{proof}
We can repeat the proof of Theorem 3.15 in \cite{BB1} applying
(\ref{first 1/s bound}) to the function $\zeta_{0}u\in\mathcal{H}$, since
$u\in\mathcal{H}_{0}$, and exploiting the identity%
\begin{equation}
\mathcal{L}\left(  \zeta_{0}u\right)  =\left(  \mathcal{L}\zeta_{0}\right)
u+\zeta_{0}\left(  \mathcal{L}u\right)  +2\sum_{i,j=1}^{q}a_{ij}\left(
t\right)  X_{i}\zeta_{0}X_{j}u, \label{L(fg)}%
\end{equation}
and the fact that the operators $\partial_{t}$ and $\widetilde{\Delta}_{h}$
commute, so that $\mathcal{L}$ and $\widetilde{\Delta}_{h}$ still commute.
\end{proof}

Also Proposition 3.16 in \cite{BB1} (Marchaud inequality on Carnot groups)
still holds, with $L^{2}\left(  \mathbb{G}\right)  $ norms replaced with
$L^{2}\left(  \mathbb{G}_{T}\right)  $ norms, and this implies the following
analog of Corollary 3.17 in \cite{BB1}$.$

\begin{corollary}
\label{Coroll Marchaud}Let $u\in$ $\mathcal{H}$, $u\left(  0,\cdot\right)
=0$, and assume that for $\varepsilon\in\left(  0,1\right)  $ and some integer
$m>1$ the seminorm $\left\vert u\right\vert _{m,1+\varepsilon}^{R}$ is finite.
Then%
\[
\left\vert u\right\vert _{1}^{R}\leqslant c\left\{  \left\vert u\right\vert
_{m,1+\varepsilon}^{R}+\left\Vert u\right\Vert _{L^{2}\left(  \mathbb{G}%
_{T}\right)  }\right\}  ,
\]
with $c=c\left(  \mathbb{G}\right)  .$
\end{corollary}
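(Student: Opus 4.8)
This is a Marchaud-type interpolation inequality, and the plan is to prove it by a dyadic telescoping of finite differences in the direction of each generator $X_{i}$, carried out directly with the $L^{2}\left(\mathbb{G}_{T}\right)$ norm. Every operator $\widetilde{\Delta}_{h}$ acts only in the space variable, and left translation in $x$ preserves $\left\Vert \cdot\right\Vert _{L^{2}\left(\mathbb{G}_{T}\right)}$ (Lebesgue measure being bi-invariant), so the time variable is a mere spectator: this is precisely the meaning of the remark that Proposition 3.16 of \cite{BB1} "still holds after integrating in $t$." In particular the hypotheses $u\in\mathcal{H}$ and $u\left(0,\cdot\right)=0$ enter only to guarantee $u\in L^{2}\left(\mathbb{G}_{T}\right)$; the substantive inputs are the finiteness of $\left\vert u\right\vert _{m,1+\varepsilon}^{R}$ and the inequality $m\geqslant2>1+\varepsilon$ (the order of the difference must exceed the smoothness exponent).

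Fix a direction $h=\operatorname{Exp}\left(tX_{i}\right)$ with $0<\left\Vert h\right\Vert \leqslant1$ and write $\widetilde{T}_{h}f\left(x\right)=f\left(h\circ x\right)$, so $\widetilde{\Delta}_{h}=\widetilde{T}_{h}-I$. Along the one-parameter subgroup $s\mapsto\operatorname{Exp}\left(sX_{i}\right)$ one has $\widetilde{T}_{h}^{2}=\widetilde{T}_{2h}$ with $2h:=\operatorname{Exp}\left(2tX_{i}\right)$ and, by $1$-homogeneity of $X_{i}$, $\left\Vert 2h\right\Vert =2\left\Vert h\right\Vert $; all the $\widetilde{\Delta}_{2^{k}h}$ are polynomials in $\widetilde{\Delta}_{h}$ and hence commute. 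The basic relation is $\widetilde{\Delta}_{2h}=\widetilde{\Delta}_{h}\left(2I+\widetilde{\Delta}_{h}\right)$, i.e. $\widetilde{\Delta}_{h}=\tfrac{1}{2}\widetilde{\Delta}_{2h}-\tfrac{1}{2}\widetilde{\Delta}_{h}^{2}$; iterating the latter across the scales $h,2h,\dots,2^{n-1}h$ gives the exact identity
\[
\widetilde{\Delta}_{h}u=2^{-n}\widetilde{\Delta}_{2^{n}h}u-\sum_{k=0}^{n-1}2^{-\left(k+1\right)}\widetilde{\Delta}_{2^{k}h}^{2}u,
\]
which already displays the whole mechanism when $m=2$.

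Next I would take $n$ to be the largest integer with $2^{n}\left\Vert h\right\Vert \leqslant1$, so that $2^{-n}\simeq\left\Vert h\right\Vert $ and every intermediate scale remains admissible. Passing to $L^{2}\left(\mathbb{G}_{T}\right)$ norms, the top-scale remainder satisfies $2^{-n}\left\Vert \widetilde{\Delta}_{2^{n}h}u\right\Vert \leqslant2^{-n}\cdot2\left\Vert u\right\Vert _{L^{2}\left(\mathbb{G}_{T}\right)}\lesssim\left\Vert h\right\Vert \left\Vert u\right\Vert _{L^{2}\left(\mathbb{G}_{T}\right)}$, while each summand obeys $\left\Vert \widetilde{\Delta}_{2^{k}h}^{2}u\right\Vert \leqslant\left\vert u\right\vert _{2,1+\varepsilon}^{R}\left(2^{k}\left\Vert h\right\Vert \right)^{1+\varepsilon}$. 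The resulting sum is $\tfrac{1}{2}\left\Vert h\right\Vert ^{1+\varepsilon}\sum_{k=0}^{n-1}2^{k\varepsilon}$, and since $\varepsilon>0$ this geometric series collapses to $O\left(2^{n\varepsilon}\right)=O\left(\left\Vert h\right\Vert ^{-\varepsilon}\right)$, leaving a bound $\lesssim\left\Vert h\right\Vert \left\vert u\right\vert _{2,1+\varepsilon}^{R}$ with a constant comparable to $\left(2^{\varepsilon}-1\right)^{-1}$ (harmless for fixed $\varepsilon$). Dividing by $\left\Vert h\right\Vert $ and taking the supremum over admissible $h$ proves the statement for $m=2$.

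The genuine obstacle is the passage to general $m$. The clean telescoping above lands on second differences $\widetilde{\Delta}_{2^{k}h}^{2}u$, which for $m>2$ are not controlled by the hypothesis; instead one must organize the dyadic iteration, using $\widetilde{\Delta}_{2h}^{m}=\widetilde{\Delta}_{h}^{m}\left(2I+\widetilde{\Delta}_{h}\right)^{m}$, so that after regrouping only $m$-th order differences $\widetilde{\Delta}_{2^{k}h}^{m}u$ survive at each scale, weighted by summable dyadic coefficients. This combinatorial regrouping is the analytic core of the Marchaud inequality of \cite{BB1}. Once it is in place, the estimate for each scale is again $\left\Vert \widetilde{\Delta}_{2^{k}h}^{m}u\right\Vert \leqslant\left\vert u\right\vert _{m,1+\varepsilon}^{R}\left(2^{k}\left\Vert h\right\Vert \right)^{1+\varepsilon}$ and the summation is the same geometric series of ratio $2^{\varepsilon}$, so the argument closes exactly as above and gives $\left\vert u\right\vert _{1}^{R}\leqslant c\left\{ \left\vert u\right\vert _{m,1+\varepsilon}^{R}+\left\Vert u\right\Vert _{L^{2}\left(\mathbb{G}_{T}\right)}\right\}$ with $c=c\left(\mathbb{G}\right)$.
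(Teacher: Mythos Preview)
Your proposal is correct and follows the same route as the paper: the paper simply observes that Proposition~3.16 of \cite{BB1} (the Marchaud inequality) carries over verbatim once $L^{2}(\mathbb{G})$ norms are replaced by $L^{2}(\mathbb{G}_{T})$ norms, and then invokes Corollary~3.17 of \cite{BB1}. You identify this reduction precisely and, going somewhat further than the paper, sketch the dyadic telescoping argument behind the Marchaud inequality itself; your observation that the hypotheses $u\in\mathcal{H}$, $u(0,\cdot)=0$ play no role beyond ensuring $u\in L^{2}(\mathbb{G}_{T})$ is also correct.
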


We are now in position to state the first step of our regularity estimate:

\begin{proposition}
\label{Prop Reg step 1}Let $\zeta_{0},\zeta\in C_{0}^{\infty}\left(
\mathbb{G}\right)  $ with $\zeta_{0}\prec\zeta$. There exists $c=c\left(
\zeta_{0},\zeta,\mathbb{G},\nu\right)  >0$ such that
\[
\text{if }u\in\mathcal{H}_{0}\text{ and }\mathcal{L}u\in L^{2}\left(  \left(
0,T\right)  ,W_{X^{R},loc}^{s,2}\left(  \mathbb{G}\right)  \right)  \text{
then }u\in L^{2}\left(  \left(  0,T\right)  ,W_{X^{R},loc}^{1,2}\left(
\mathbb{G}\right)  \right)
\]
and%
\begin{equation}
\left\Vert \zeta_{0}u\right\Vert _{L^{2}\left(  \left(  0,T\right)  ,W_{X^{R}%
}^{1,2}\left(  \mathbb{G}\right)  \right)  }\leqslant c\left(  \left\Vert
\zeta\mathcal{L}u\right\Vert _{L^{2}\left(  \left(  0,T\right)  ,W_{X^{R}%
}^{s,2}\left(  \mathbb{G}\right)  \right)  }+\left\Vert \zeta u\right\Vert
_{L^{2}\left(  \mathbb{G}_{T}\right)  }\right)  .\label{regularity step 1}%
\end{equation}

\end{proposition}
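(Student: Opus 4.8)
The goal is to bootstrap the single fractional-regularity bound (\ref{first 1/s bound}), valid for functions in $\mathcal{H}$ vanishing at $t=0$, into a full integer-order right-invariant Sobolev estimate. The strategy is to feed the hypothesis $\mathcal{L}u\in L^{2}((0,T),W_{X^{R},loc}^{s,2}(\mathbb{G}))$ into the localized higher-order difference estimate of Theorem \ref{Thm regularity localized}, take $m=s$, and then convert the resulting difference seminorm bound back into a genuine Sobolev norm via Proposition \ref{Lemma 2 caratt incr fin}. The two conversions between finite-difference seminorms and vector-field Sobolev norms (Propositions \ref{Thm caratterizzaz increm finiti} and \ref{Lemma 2 caratt incr fin}) are the bridges that make the whole argument run.

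First I would fix cutoffs $\zeta_{0}\prec\eta\prec\zeta$ with an intermediate function $\eta$, so that derivatives falling on cutoffs stay controlled. Applying Theorem \ref{Thm regularity localized} with $m=s$ to the pair $\zeta_{0}\prec\eta$ gives
\[
\left\vert \zeta_{0}u\right\vert_{s,1}^{R}=\left\vert \zeta_{0}u\right\vert_{s,s/s}^{R}\leqslant c\left(\sum_{j=0}^{s-1}\left\vert \eta\mathcal{L}u\right\vert_{j}^{R}+\left\Vert \eta u\right\Vert_{2}\right).
\]
On the right-hand side, the term $\sum_{j=0}^{s-1}\left\vert \eta\mathcal{L}u\right\vert_{j}^{R}$ is bounded, via part 2 of Proposition \ref{Thm caratterizzaz increm finiti} (with $m=s-1$), by $c\left\Vert \eta\mathcal{L}u\right\Vert_{L^{2}((0,T),W_{X^{R}}^{s-1,2}(\mathbb{G}))}$, which is finite precisely because $\mathcal{L}u\in L^{2}((0,T),W_{X^{R},loc}^{s,2})$ and $\eta$ is compactly supported; indeed $s-1<s$, so we even have a little room. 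Thus the seminorm $\left\vert \zeta_{0}u\right\vert_{s,1}^{R}$ is finite and controlled by the data.

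Next I would promote this exponent-$1$ decay at difference order $s$ down to a clean first-order bound. Since $\left\vert \zeta_{0}u\right\vert_{s,1}^{R}<\infty$ with $s>1$, I can invoke the Marchaud-type Corollary \ref{Coroll Marchaud} applied to the function $\zeta_{0}u\in\mathcal{H}$ (which vanishes at $t=0$ because $u\in\mathcal{H}_{0}$), taking $m=s$ and $\varepsilon$ small — or more directly, use that $\left\vert \zeta_{0}u\right\vert_{s,1}^{R}<\infty$ gives control of $\left\vert \zeta_{0}u\right\vert_{1}^{R}=\left\vert \zeta_{0}u\right\vert_{1,1}^{R}$ by the iterated difference/Marchaud machinery. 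Once $\left\vert \zeta_{0}u\right\vert_{1}^{R}<\infty$ is in hand, part 2 of Proposition \ref{Lemma 2 caratt incr fin} yields $\zeta_{0}u\in L^{2}((0,T),W_{X^{R}}^{1,2}(\mathbb{G}))$ with $\left\Vert \nabla_{X^{R}}(\zeta_{0}u)\right\Vert_{L^{2}(\mathbb{G}_{T})}\leqslant C\left\vert \zeta_{0}u\right\vert_{1}^{R}$, and adding the trivial $L^{2}$ term produces (\ref{regularity step 1}), with the $\left\Vert \zeta u\right\Vert_{L^{2}(\mathbb{G}_{T})}$ term absorbing $\left\Vert \eta u\right\Vert_{2}$ after the cutoffs are reconciled. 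The qualitative membership $u\in L^{2}((0,T),W_{X^{R},loc}^{1,2}(\mathbb{G}))$ follows since $\zeta_{0}$ was an arbitrary compactly supported cutoff.

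The step I expect to be most delicate is the bookkeeping of the cutoff functions and the matching of seminorm indices. Theorem \ref{Thm regularity localized} outputs the index $m/s$ on the left, so choosing $m=s$ to land exactly at exponent $1$ is the point that makes the argument close; any other choice would leave a gap between the fractional decay obtained and the first-order decay needed for Proposition \ref{Lemma 2 caratt incr fin}. A secondary technical point is verifying that $\left\Vert \eta u\right\Vert_{2}$ and the intermediate norms are genuinely finite — this rests on $u\in\mathcal{H}_{0}$ guaranteeing $\eta u\in\mathcal{H}$, hence two space-derivatives in $L^{2}$ locally, so the right-hand sides are a priori finite and the estimates are not vacuous.
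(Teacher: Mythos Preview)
Your overall architecture is right---use Theorem \ref{Thm regularity localized} to get a high-order difference seminorm, then Corollary \ref{Coroll Marchaud} to reduce to $|\cdot|_{1}^{R}$, then Proposition \ref{Lemma 2 caratt incr fin} to pass to the Sobolev norm---but your choice of index $m=s$ breaks the chain at the Marchaud step.

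With $m=s$, Theorem \ref{Thm regularity localized} gives you control of $\left\vert \zeta_{0}u\right\vert_{s,\,s/s}^{R}=\left\vert \zeta_{0}u\right\vert_{s,1}^{R}$, i.e.\ exponent exactly $1$. But Corollary \ref{Coroll Marchaud} is stated for $\left\vert u\right\vert_{m,1+\varepsilon}^{R}$ with $\varepsilon\in(0,1)$: you need \emph{strictly more} than first-order decay of the $m$-th difference to recover a genuine first-order bound $\left\vert u\right\vert_{1}^{R}$. The borderline exponent $1$ is precisely where Marchaud-type inequalities fail to close (one typically picks up a logarithmic divergence), so your appeal to ``$\varepsilon$ small'' or ``the iterated difference/Marchaud machinery'' does not go through as written.

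The fix, and what the paper actually does, is to take $m=s+1$. Then Theorem \ref{Thm regularity localized} yields
\[
\left\vert \zeta_{0}u\right\vert_{s+1,\,(s+1)/s}^{R}=\left\vert \zeta_{0}u\right\vert_{s+1,\,1+1/s}^{R}\leqslant c\left(\sum_{j=0}^{s}\left\vert \zeta\mathcal{L}u\right\vert_{j}^{R}+\left\Vert \zeta u\right\Vert_{2}\right),
\]
and now Corollary \ref{Coroll Marchaud} applies with $\varepsilon=1/s\in(0,1)$ to give $\left\vert \zeta_{0}u\right\vert_{1}^{R}$. Note that the sum on the right now runs to $j=s$, which is exactly why the hypothesis needs $\mathcal{L}u\in L^{2}\bigl((0,T),W_{X^{R},loc}^{s,2}\bigr)$ and why the final estimate carries the $W_{X^{R}}^{s,2}$ norm of $\zeta\mathcal{L}u$ rather than $W_{X^{R}}^{s-1,2}$; your version with $m=s$ would not have used the full strength of the assumption.
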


\begin{proof}
Applying to $\zeta_{0}u$ \ Corollary \ref{Coroll Marchaud} and Theorem
\ref{Thm regularity localized} with $m=s+1$ and $\varepsilon=1/s$ we can
write:%
\[
\left\vert \zeta_{0}u\right\vert _{1}^{R}\leqslant c\left\{  \left\vert
\zeta_{0}u\right\vert _{s+1,1+1/s}^{R}+\left\Vert \zeta_{0}u\right\Vert
_{L^{2}\left(  \mathbb{G}_{T}\right)  }\right\}  \leqslant c\left(  \sum
_{j=0}^{s}\left\vert \zeta\mathcal{L}u\right\vert _{j}^{R}+\left\Vert \zeta
u\right\Vert _{L^{2}\left(  \mathbb{G}_{T}\right)  }\right)
\]
From this inequality, by Propositions \ref{Lemma 2 caratt incr fin}
and\ \ref{Thm caratterizzaz increm finiti} we conclude the desired result.
\end{proof}

To iterate this result to higher order derivatives, we first need a
regularization result allowing to apply (\ref{regularity step 1}) to functions
$u$ satisfying weaker assumptions.

\begin{proposition}
\label{Ch4-Prop Local L2}Let $u\in W^{1,2}\left(  \left(  0,T\right)
,L_{loc}^{2}\left(  \mathbb{G}\right)  \right)  ,u\left(  0,\cdot\right)  =0,$
be a weak solution to $\mathcal{L}u=F\in L^{2}\left(  \left(  0,T\right)
,L_{loc}^{2}\left(  \mathbb{G}\right)  \right)  $ in the following sense%
\begin{align}
&  \int_{\mathbb{G}}\left\{  -\partial_{t}u\left(  t,x\right)  \phi\left(
x\right)  +\sum_{i,j=1}^{q}a_{ij}\left(  t\right)  X_{i}X_{j}\phi\left(
x\right)  u\left(  t,x\right)  \right\}  dx\nonumber\\
&  =\int_{\mathbb{G}}F\left(  t,x\right)  \phi\left(  x\right)  dx\text{ for
every }\phi\in C_{0}^{\infty}\left(  \mathbb{G}\right)  \text{ and a.e. }%
t\in\left(  0,T\right)  .\label{def weak sol}%
\end{align}
If $F\in L^{2}\left(  \left(  0,T\right)  ,W_{X^{R},loc}^{s^{2},2}\left(
\mathbb{G}\right)  \right)  $ then $u\in L^{2}\left(  \left(  0,T\right)
,W_{X^{R},loc}^{1,2}\left(  \mathbb{G}\right)  \right)  $ and for every
$\zeta,\zeta_{1}\in C_{0}^{\infty}\left(  \mathbb{G}\right)  $ with
$\zeta\prec\zeta_{1}$ the following estimate holds:%
\begin{equation}
\left\Vert \zeta u\right\Vert _{L^{2}\left(  \left(  0,T\right)  ,W_{X^{R}%
}^{1,2}\left(  \mathbb{G}\right)  \right)  }\leqslant c\left\{  \left\Vert
\zeta_{1}F\right\Vert _{L^{2}\left(  \left(  0,T\right)  ,W_{X^{R}}%
^{s,2}\left(  \mathbb{G}\right)  \right)  }+\left\Vert \zeta_{1}u\right\Vert
_{L^{2}\left(  \mathbb{G}_{T}\right)  }\right\}  \label{Ch4-k=1}%
\end{equation}
with $c=c\left(  \zeta_{0},\zeta,\mathbb{G},\nu\right)  .$
\end{proposition}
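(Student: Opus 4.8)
The plan is to reduce the statement to Proposition \ref{Prop Reg step 1} by regularizing $u$ in the space variables. Since $u$ is only assumed in $W^{1,2}\left(\left(0,T\right),L_{loc}^{2}\left(\mathbb{G}\right)\right)$ it need not belong to $\mathcal{H}_{0}$, so (\ref{regularity step 1}) cannot be applied to $u$ itself. First I would pick an approximate identity $\psi_{\varepsilon}\in C_{0}^{\infty}\left(\mathbb{G}\right)$, $\psi_{\varepsilon}\geqslant0$, $\int_{\mathbb{G}}\psi_{\varepsilon}=1$, $\operatorname{sprt}\psi_{\varepsilon}\subset\left\{\left\Vert x\right\Vert \leqslant\varepsilon\right\}$, and mollify by the group convolution $u_{\varepsilon}=\psi_{\varepsilon}\ast u$ (in $x$, for a.e.\ $t$). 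Putting the smooth factor on the \emph{left} is essential: as $\mathcal{L}$ in (\ref{heat}) is left invariant and $\partial_{t}$ commutes with the spatial convolution, Proposition \ref{Prop conv distrib} (i) gives $\mathcal{L}u_{\varepsilon}=\psi_{\varepsilon}\ast\left(\mathcal{L}u\right)=\psi_{\varepsilon}\ast F$, where $\mathcal{L}u=F$ is read, for a.e.\ $t$, as the distributional identity (\ref{def weak sol}). Moreover $u_{\varepsilon}$ is smooth in $x$, $\partial_{t}u_{\varepsilon}=\psi_{\varepsilon}\ast\partial_{t}u\in L_{loc}^{2}$ and $u_{\varepsilon}\left(0,\cdot\right)=0$, so $u_{\varepsilon}\in\mathcal{H}_{0}$ for each fixed $\varepsilon$.

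Next I would apply Proposition \ref{Prop Reg step 1} to $u_{\varepsilon}$, inserting an intermediate cut-off $\zeta\prec\zeta^{\prime}\prec\zeta_{1}$, to get
\[
\left\Vert \zeta u_{\varepsilon}\right\Vert _{L^{2}\left(\left(0,T\right),W_{X^{R}}^{1,2}\left(\mathbb{G}\right)\right)}\leqslant c\left(\left\Vert \zeta^{\prime}\left(\psi_{\varepsilon}\ast F\right)\right\Vert _{L^{2}\left(\left(0,T\right),W_{X^{R}}^{s,2}\left(\mathbb{G}\right)\right)}+\left\Vert \zeta^{\prime}u_{\varepsilon}\right\Vert _{L^{2}\left(\mathbb{G}_{T}\right)}\right).
\]
The task is to bound the right-hand side uniformly in $\varepsilon$. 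The zero-order term is harmless, since $u_{\varepsilon}\to u$ in $L_{loc}^{2}$ and $\left\Vert \zeta^{\prime}u_{\varepsilon}\right\Vert _{L^{2}}\leqslant\left\Vert \zeta_{1}u\right\Vert _{L^{2}}$ for $\varepsilon$ small. Since $\operatorname{sprt}\psi_{\varepsilon}$ shrinks, for small $\varepsilon$ one has $\zeta\left(\psi_{\varepsilon}\ast F\right)=\zeta\left(\psi_{\varepsilon}\ast\left(\zeta_{1}F\right)\right)$ near $\operatorname{sprt}\zeta$, so the estimate localizes and $F$ may be replaced by $\zeta_{1}F$.

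The main obstacle is the uniform bound for $\left\Vert \psi_{\varepsilon}\ast F\right\Vert _{W_{X^{R}}^{s,2}}$, and here lies the real difficulty. A right-invariant derivative does not pass onto the smooth factor of the convolution as a left-invariant one does: the naive identity $X_{i}^{R}\left(\psi_{\varepsilon}\ast F\right)=\left(X_{i}^{R}\psi_{\varepsilon}\right)\ast F$ loads the derivative on $\psi_{\varepsilon}$ and diverges as $\varepsilon\to0$. Instead I would transfer the right-invariant derivatives onto $F$. Differentiating $\psi_{\varepsilon}\ast F$ along $X_{i}^{R}$ and conjugating by the group law, $X_{i}^{R}\left(\psi_{\varepsilon}\ast F\right)=\sum_{k}\left(c_{k}\psi_{\varepsilon}\right)\ast\left(X_{k}^{R}F\right)$ with polynomial weights $c_{k}$ coming from the adjoint action; iterating, a product of $s$ right-invariant fields lands on $F$ as $\left(X^{R}\right)^{(s)}F$ convolved against polynomial-weighted mollifiers $c\,\psi_{\varepsilon}$, whose $L^{1}$ norms stay bounded on the shrinking support. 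By Young's inequality this controls $\left\Vert \psi_{\varepsilon}\ast F\right\Vert _{W_{X^{R}}^{s,2}}$ by $\left\Vert \zeta_{1}F\right\Vert _{L^{2}\left(\left(0,T\right),W_{X^{R}}^{s,2}\right)}$, uniformly in $\varepsilon$, the commutators generated by the cut-offs being of lower order. The stronger hypothesis $F\in W_{X^{R}}^{s^{2},2}$ leaves ample room for a cruder variant of this transfer, in which one passes through the \emph{left}-invariant scale (where derivatives do commute through the convolution) and back; each passage between the $X$- and $X^{R}$-Sobolev scales costs a factor equal to the step $s$, in the spirit of Proposition \ref{Prop reg sobolev}, so that $s$ derivatives in one scale are guaranteed by $s^{2}$ in the other.

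Finally I would pass to the limit. The uniform bound yields a subsequence with $\zeta u_{\varepsilon}\rightharpoonup g$ weakly in $L^{2}\left(\left(0,T\right),W_{X^{R}}^{1,2}\left(\mathbb{G}\right)\right)$; since $u_{\varepsilon}\to u$ in $L_{loc}^{2}$ we identify $g=\zeta u$, so $u\in L^{2}\left(\left(0,T\right),W_{X^{R},loc}^{1,2}\left(\mathbb{G}\right)\right)$, and weak lower semicontinuity of the norm gives (\ref{Ch4-k=1}). I expect all the genuine work to sit in the derivative-transfer step above: making it quantitative and uniform in $\varepsilon$ is the crux, whereas the mollification bookkeeping and the compactness argument are routine.
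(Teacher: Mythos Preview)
Your proposal is correct and follows essentially the same route as the paper: mollify $u$ by left group convolution with an approximate identity so that $u_{\varepsilon}\in\mathcal{H}_{0}$ and $\mathcal{L}u_{\varepsilon}=F_{\varepsilon}$, apply Proposition~\ref{Prop Reg step 1} to $u_{\varepsilon}$, bound the right-hand side uniformly in $\varepsilon$, and conclude by weak compactness and lower semicontinuity. In particular, the ``cruder variant'' you describe for the derivative-transfer step is precisely what the paper does: it estimates $\left\Vert \zeta_{1}F_{\varepsilon}-\zeta_{1}F\right\Vert_{L^{2}\left(\left(0,T\right),W_{X^{R}}^{s,2}\right)}$ first by the Euclidean $W^{s,2}$ norm on a compact set, then by the $W_{X}^{s^{2},2}$ norm via H\"{o}rmander's condition (Proposition~\ref{Prop reg sobolev}), and finally uses $X_{i}\left(F_{\varepsilon}\right)=\left(X_{i}F\right)_{\varepsilon}$ together with dominated convergence to pass to the limit; this is exactly why the hypothesis requires $s^{2}$ rather than $s$ derivatives of $F$. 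Your first suggestion (direct transfer of $X^{R}$-derivatives via the adjoint action) would in principle give a sharper result but is not pursued in the paper. One point that deserves more care than your single sentence gives it is the identity $\mathcal{L}u_{\varepsilon}=F_{\varepsilon}$: since $\mathcal{L}u$ exists only in the weak sense (\ref{def weak sol}), the paper spells out this computation explicitly by testing against $\varphi\in C_{0}^{\infty}\left(\mathbb{G}\right)$ and using the left invariance of $\mathcal{A}$ under the change of variables $x\mapsto y\circ z$.
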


\begin{proof}
Let us define the $\varepsilon$-mollified $u_{\varepsilon}$ of $u$ as follows.
For $\phi\in C_{0}^{\infty}\left(  \mathbb{G}\right)  $ such that
\[
\phi\geqslant0,\text{ }\phi\left(  x\right)  =0\text{ for }\left\Vert
x\right\Vert \geqslant1\text{ and }\int_{\mathbb{G}}\phi\left(  x\right)
dx=1,
\]
define, for any $\varepsilon>0,$%
\[
\phi_{\varepsilon}\left(  x\right)  =\varepsilon^{-Q}\phi\left(
D_{\varepsilon^{-1}}x\right)
\]
and%
\[
u_{\varepsilon}\left(  t,x\right)  =\left(  \phi_{\varepsilon}\ast u\right)
\left(  t,x\right)  =\int_{\mathbb{G}}\phi_{\varepsilon}\left(  y\right)
u\left(  t,y^{-1}\circ x\right)  dy=\int_{\mathbb{G}}\phi_{\varepsilon}\left(
x\circ z^{-1}\right)  u\left(  t,z\right)  dz.
\]
Now the function $u_{\varepsilon}$ is smooth with respect to $x$ (as can be
seen computing $X_{I}^{R}u_{\varepsilon}$), while%
\[
\frac{\partial u_{\varepsilon}}{\partial t}=\phi_{\varepsilon}\ast
\frac{\partial u}{\partial t}%
\]
and, for any couple of domains $K\Subset K^{\prime}\Subset\mathbb{G}$ and
$\varepsilon$ small enough,%
\begin{align*}
\left\Vert \frac{\partial u_{\varepsilon}}{\partial t}\left(  t,\cdot\right)
\right\Vert _{L^{2}\left(  K\right)  }  &  \leqslant\left\Vert \frac{\partial
u}{\partial t}\left(  t,\cdot\right)  \right\Vert _{L^{2}\left(  K^{\prime
}\right)  }\\
\left\Vert \frac{\partial u_{\varepsilon}}{\partial t}\right\Vert
_{L^{2}\left(  \left(  0,T\right)  ,L^{2}\left(  K\right)  \right)  }  &
\leqslant\left\Vert \frac{\partial u}{\partial t}\right\Vert _{L^{2}\left(
\left(  0,T\right)  ,L^{2}\left(  K^{\prime}\right)  \right)  }.
\end{align*}
Here we have used Young's inequality in the form%
\begin{equation}
\left\Vert f\ast\phi_{\varepsilon}\right\Vert _{L^{2}\left(  K\right)
}\leqslant\left\Vert f\right\Vert _{L^{2}\left(  K^{\prime}\right)  }
\label{local Young}%
\end{equation}
for $K\Subset K^{\prime}$, and $\varepsilon$ small enough, since
$\phi_{\varepsilon}$ is compactly supported.

Also,%
\[
u_{\varepsilon}\left(  0,x\right)  =\int_{\mathbb{G}}\phi_{\varepsilon}\left(
y\right)  u\left(  0,y^{-1}\circ x\right)  dy=0,
\]
hence $u_{\varepsilon}\in\mathcal{H}_{0}$ and we can apply to $u_{\varepsilon
}$ the estimate proved in Proposition \ref{Prop Reg step 1}:%
\begin{equation}
\left\Vert \zeta u_{\varepsilon}\right\Vert _{L^{2}\left(  \left(  0,T\right)
,W_{X^{R}}^{1,2}\left(  \mathbb{G}\right)  \right)  }\leqslant c\left\{
\left\Vert \zeta_{1}\mathcal{L}\left(  u_{\varepsilon}\right)  \right\Vert
_{L^{2}\left(  \left(  0,T\right)  ,W_{X^{R}}^{s,2}\left(  \mathbb{G}\right)
\right)  }+\left\Vert \zeta_{1}u_{\varepsilon}\right\Vert _{L^{2}\left(
\mathbb{G}_{T}\right)  }\right\}  .\label{Ch4-epsilon local}%
\end{equation}

We claim that%
\begin{equation}
\mathcal{L}\left(  u_{\varepsilon}\right)  =F_{\varepsilon}
\label{Ch4-L f epsi}%
\end{equation}
for $a.e.$ $t$ and $a.e.$ $x$. This is not trivial since $\mathcal{L}u$ just
exists in the above weak sense, hence we cannot simply write $\mathcal{L}%
\left(  u_{\varepsilon}\right)  =\left(  \mathcal{L}u\right)  _{\varepsilon}$.
However, for every $\varphi\in C_{0}^{\infty}\left(  \mathbb{G}\right)  $,
letting%
\[
\mathcal{L}\mathcal{=-\partial}_{t}+\mathcal{A}%
\]
with%
\[
\mathcal{A}u\left(  t,x\right)  =\sum_{i,j=1}^{q}a_{ij}\left(  t\right)
X_{i}X_{j}u\left(  t,x\right)
\]
we can write:%
\[
\int_{\mathbb{G}}\mathcal{L}\left(  u_{\varepsilon}\right)  \left(
t,x\right)  \varphi\left(  x\right)  dx=\int_{\mathbb{G}}-\partial_{t}\left(
u_{\varepsilon}\right)  \left(  t,x\right)  \varphi\left(  x\right)
dx+\int_{\mathbb{G}}u_{\varepsilon}\left(  t,x\right)  \mathcal{A}%
\varphi\left(  x\right)  dx
\]
Next,%
\begin{align*}
&  \int_{\mathbb{G}}\mathcal{A}\varphi\left(  x\right)  \left(  \int%
\phi_{\varepsilon}\left(  y\right)  u\left(  t,y^{-1}\circ x\right)
dy\right)  dx\\
&  =\int_{\mathbb{G}}\phi_{\varepsilon}\left(  y\right)  \left(
\int\mathcal{A}\varphi\left(  x\right)  u\left(  t,y^{-1}\circ x\right)
dx\right)  dy\\
&  =\int_{\mathbb{G}}\phi_{\varepsilon}\left(  y\right)  \left(
\int\mathcal{A}\varphi\left(  y\circ z\right)  u\left(  t,z\right)  dz\right)
dy
\end{align*}
and%
\begin{align*}
\int_{\mathbb{G}}\partial_{t}\left(  u_{\varepsilon}\right)  \left(
t,x\right)  \varphi\left(  x\right)  dx  &  =\int_{\mathbb{G}}\left(  \int%
\phi_{\varepsilon}\left(  y\right)  \partial_{t}u\left(  t,y^{-1}\circ
x\right)  dy\right)  \varphi\left(  x\right)  dx\\
&  =\int_{\mathbb{G}}\phi_{\varepsilon}\left(  y\right)  \left(  \int%
\partial_{t}u\left(  t,y^{-1}\circ x\right)  \varphi\left(  x\right)
dx\right)  dy\\
&  =\int_{\mathbb{G}}\phi_{\varepsilon}\left(  y\right)  \left(  \int%
\partial_{t}u\left(  t,z\right)  \varphi\left(  y\circ z\right)  dz\right)  dy
\end{align*}
letting $\psi_{y}\left(  z\right)  =\varphi\left(  y\circ z\right)  $
\begin{align*}
&  \int_{\mathbb{G}}\mathcal{L}\left(  u_{\varepsilon}\right)  \left(
t,x\right)  \varphi\left(  x\right)  dx\\
&  =\int_{\mathbb{G}}\phi_{\varepsilon}\left(  y\right)  \left(
\int_{\mathbb{G}}-\partial_{t}u\left(  t,z\right)  \psi_{y}\left(  z\right)
+\mathcal{A}\psi_{y}\left(  z\right)  u\left(  z\right)  dz\right)  dy\\
&  =\int_{\mathbb{G}}\phi_{\varepsilon}\left(  y\right)  \left(
\int_{\mathbb{G}}\psi_{y}\left(  z\right)  F\left(  t,z\right)  dz\right)
dy\\
&  =\int_{\mathbb{G}}\phi_{\varepsilon}\left(  y\right)  \left(
\int_{\mathbb{G}}\varphi\left(  x\right)  F\left(  t,y^{-1}\circ x\right)
dx\right)  dy\\
&  =\int_{\mathbb{G}}\varphi\left(  x\right)  \left(  \int_{\mathbb{G}}%
\phi_{\varepsilon}\left(  y\right)  F\left(  t,y^{-1}\circ x\right)
dy\right)  dx=\int_{\mathbb{G}}\varphi\left(  x\right)  F_{\varepsilon}\left(
t,x\right)  dx
\end{align*}
and (\ref{Ch4-L f epsi}) follows. By known properties of the mollifiers, as
$\varepsilon\rightarrow0$ we have $\phi_{\varepsilon}\ast u\rightarrow u$ in
$L^{2}\left(  \mathbb{R}^{N}\right)  $ as soon as $u\in L^{2}\left(
\mathbb{R}^{N}\right)  $. Also, for every left invariant differential operator
$L$ we can write $L\left(  \phi_{\varepsilon}\ast u\right)  =\phi
_{\varepsilon}\ast Lu$ as soon as $Lu$ exists in $L^{2}\left(  \mathbb{R}%
^{N}\right)  .$ Therefore%
\begin{equation}
\zeta_{1}\mathcal{L}\left(  u_{\varepsilon}\right)  =\zeta_{1}F_{\varepsilon
}\rightarrow\zeta_{1}F\text{ in }W_{X}^{k,2}\left(  \mathbb{G}\right)  ,\text{
for }a.e.\text{ }t \label{pointwise}%
\end{equation}
as soon as $F\in L^{2}\left(  \left(  0,T\right)  ,W_{X,loc}^{k,2}\left(
\mathbb{G}\right)  \right)  $.

To prove convergence in $L^{2}\left(  \left(  0,T\right)  ,W_{X^{R},loc}%
^{s,2}\left(  \mathbb{G}\right)  \right)  $ we make the following rough
estimates:%
\begin{align}
\left\Vert \zeta_{1}\mathcal{L}\left(  u_{\varepsilon}\right)  -\zeta
_{1}F\right\Vert _{L^{2}\left(  \left(  0,T\right)  ,W_{X^{R}}^{s,2}\left(
\mathbb{G}\right)  \right)  }  &  \leqslant c\left\Vert \zeta_{1}\left(
\mathcal{L}u\right)  _{\varepsilon}-\zeta_{1}F\right\Vert _{L^{2}\left(
\left(  0,T\right)  ,W^{s,2}\left(  \mathbb{G}\right)  \right)  }\nonumber\\
&  \leqslant c\left\Vert \zeta_{1}F_{\varepsilon}-\zeta_{1}F\right\Vert
_{L^{2}\left(  \left(  0,T\right)  ,W_{X}^{s^{2},2}\left(  \mathbb{G}\right)
\right)  }. \label{normwise}%
\end{align}
In the first inequality we have bounded the Sobolev norm $W_{X^{R}}^{s,2}$ (on
a compact set containing the support of $\zeta_{1}$) with the Euclidean
Sobolev norm on the same domain; in the second one we have exploited
H\"{o}rmander's condition.

We want to show that, for $F\in L^{2}\left(  \left(  0,T\right)
,W_{X,loc}^{s^{2},2}\left(  \mathbb{G}\right)  \right)  $,
\begin{equation}
\left\Vert \zeta_{1}F_{\varepsilon}-\zeta_{1}F\right\Vert _{L^{2}\left(
\left(  0,T\right)  ,W_{X}^{s^{2},2}\left(  \mathbb{G}\right)  \right)
}\rightarrow0.\label{conclusion}%
\end{equation}
Now:%
\begin{align*}
&  \left\Vert \zeta_{1}F_{\varepsilon}-\zeta_{1}F\right\Vert _{L^{2}\left(
\left(  0,T\right)  ,W_{X}^{s^{2},2}\left(  \mathbb{G}\right)  \right)  }%
^{2}\\
&  =\int_{0}^{T}\left\Vert \zeta_{1}F_{\varepsilon}\left(  t,\cdot\right)
-\zeta_{1}F\left(  t,\cdot\right)  \right\Vert _{W_{X}^{s^{2},2}\left(
\mathbb{G}\right)  }^{2}dt\equiv\int_{0}^{T}g_{\varepsilon}\left(  t\right)
dt
\end{align*}
where by (\ref{pointwise}) we already know that%
\[
g_{\varepsilon}\left(  t\right)  \rightarrow0\text{ for }a.e.t\in\left[
0,T\right]  \text{, as }\varepsilon\rightarrow0.
\]
To apply Lebesgue theorem and conclude the desired result we need to bound
$g_{\varepsilon}$ with an integrable function independent of $\varepsilon$.
Now:%
\begin{align*}
\left\Vert \zeta_{1}F_{\varepsilon}\left(  t,\cdot\right)  -\zeta_{1}F\left(
t,\cdot\right)  \right\Vert _{W_{X}^{s^{2},2}\left(  \mathbb{G}\right)  } &
\leqslant\left\Vert \zeta_{1}F_{\varepsilon}\left(  t,\cdot\right)
\right\Vert _{W_{X}^{s^{2},2}\left(  \mathbb{G}\right)  }+\left\Vert \zeta
_{1}F\left(  t,\cdot\right)  \right\Vert _{W_{X}^{s^{2},2}\left(
\mathbb{G}\right)  }\\
\left\Vert \zeta_{1}F_{\varepsilon}\left(  t,\cdot\right)  \right\Vert
_{L^{2}\left(  \mathbb{G}\right)  }^{2} &  \leqslant\left\Vert F_{\varepsilon
}\left(  t,\cdot\right)  \right\Vert _{L^{2}\left(  K\right)  }^{2}%
\leqslant\left\Vert F\left(  t,\cdot\right)  \right\Vert _{L^{2}\left(
K^{\prime}\right)  }^{2}\in L^{1}\left(  0,T\right)
\end{align*}
where $K\Subset K^{\prime}\Subset\mathbb{G}$ and $\varepsilon$ small enough
(see (\ref{local Young})). By (\ref{f*Pg}), we have $X_{i}\left(
F_{\varepsilon}\right)  =\left(  X_{i}F\right)  _{\varepsilon}$, then%
\begin{align*}
X_{i}\left(  \zeta_{1}F_{\varepsilon}\right)   &  =\left(  X_{i}\zeta
_{1}\right)  F_{\varepsilon}+\zeta_{1}\left(  X_{i}F\right)  _{\varepsilon},\\
\left\Vert X_{i}\left(  \zeta_{1}F_{\varepsilon}\left(  t,\cdot\right)
\right)  \right\Vert _{L^{2}\left(  \mathbb{G}\right)  }^{2} &  \leqslant
c\left(  \left\Vert F_{\varepsilon}\left(  t,\cdot\right)  \right\Vert
_{L^{2}\left(  K\right)  }^{2}+\left\Vert \left(  X_{i}F\right)
_{\varepsilon}\left(  t,\cdot\right)  \right\Vert _{L^{2}\left(  K\right)
}^{2}\right)  \\
&  \leqslant c\left(  \left\Vert F\left(  t,\cdot\right)  \right\Vert
_{L^{2}\left(  K^{\prime}\right)  }^{2}+\left\Vert X_{i}F\left(
t,\cdot\right)  \right\Vert _{L^{2}\left(  K^{\prime}\right)  }^{2}\right)
\in L^{1}\left(  0,T\right)  ,
\end{align*}
and an interative reasoning allows to conclude (\ref{conclusion}) Recalling
(\ref{normwise}) and the fact that%
\[
\left\Vert \zeta_{1}u_{\varepsilon}\right\Vert _{L^{2}\left(  \mathbb{G}%
_{T}\right)  }\rightarrow\left\Vert \zeta_{1}u\right\Vert _{L^{2}\left(
\mathbb{G}_{T}\right)  },
\]
we conclude that the right hand side of (\ref{Ch4-epsilon local}) is bounded.
Hence the sequence $\zeta u_{\varepsilon}$ is bounded in $L^{2}\left(  \left(
0,T\right)  ,W_{X^{R}}^{1,2}\left(  \mathbb{G}\right)  \right)  $, and there
exists a subsequence of $\zeta u_{\varepsilon}$ weakly converging in
$L^{2}\left(  \left(  0,T\right)  ,W_{X^{R}}^{1,2}\left(  \mathbb{G}\right)
\right)  $ to some $g$ and in particular weakly converging in $L^{2}\left(
\mathbb{G}_{T}\right)  $ to $\zeta u$. This is enough to say that $\zeta u\in
L^{2}\left(  \left(  0,T\right)  ,W_{X^{R}}^{1,2}\left(  \mathbb{G}\right)
\right)  $. Moreover,%
\begin{align*}
\left\Vert \zeta u\right\Vert _{L^{2}\left(  \left(  0,T\right)  ,W_{X^{R}%
}^{1,2}\left(  \mathbb{G}\right)  \right)  } &  \leqslant\lim\inf\left\Vert
\zeta u_{\varepsilon}\right\Vert _{L^{2}\left(  \left(  0,T\right)  ,W_{X^{R}%
}^{1,2}\left(  \mathbb{G}\right)  \right)  }\\
&  \leqslant c\left\{  \left\Vert \zeta_{1}F\right\Vert _{L^{2}\left(  \left(
0,T\right)  ,W_{X^{R}}^{s,2}\left(  \mathbb{G}\right)  \right)  }+\left\Vert
\zeta_{1}u\right\Vert _{L^{2}\left(  \mathbb{G}_{T}\right)  }\right\}
\end{align*}
hence (\ref{Ch4-k=1}) holds.
\end{proof}

\begin{theorem}
[Regularity estimates in $x$]\label{Thm subsubelliptic}Let $u\in
W^{1,2}\left(  \left(  0,T\right)  ,L_{loc}^{2}\left(  \mathbb{G}\right)
\right)  $, $u\left(  0,\cdot\right)  =0$, be a weak solution to
$\mathcal{L}u=F\in L^{2}\left(  \left(  0,T\right)  ,L_{loc}^{2}\left(
\mathbb{G}\right)  \right)  $ in the sense of (\ref{def weak sol}) and let
$\zeta,\zeta_{1}\in C_{0}^{\infty}\left(  \mathbb{G}\right)  ,\zeta\prec
\zeta_{1}$. Then for any $k=1,2,3,...,$ there exists $c=c\left(  k,\zeta
,\zeta_{1},\mathbb{G},\nu\right)  >0$ such that whenever $\zeta_{1}F\in
L^{2}\left(  \left(  0,T\right)  ,W_{X^{R}}^{k+s^{2}-1,2}\left(
\mathbb{G}\right)  \right)  $ then $\zeta u\in L^{2}\left(  \left(
0,T\right)  ,W_{X^{R}}^{k,2}\left(  \mathbb{G}\right)  \right)  $ and%
\begin{equation}
\left\Vert \zeta u\right\Vert _{W_{X^{R}}^{k,2}\left(  \mathbb{G}_{T}\right)
}\leqslant c\left\{  \left\Vert \zeta_{1}F\right\Vert _{L^{2}\left(  \left(
0,T\right)  ,W_{X^{R}}^{k+s-1,2}\left(  \mathbb{G}\right)  \right)
}+\left\Vert \zeta_{1}u\right\Vert _{L^{2}\left(  \mathbb{G}_{T}\right)
}\right\}  .\label{regularity step k}%
\end{equation}

\end{theorem}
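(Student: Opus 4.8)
The plan is to argue by induction on $k$. The base case $k=1$ is exactly Proposition \ref{Ch4-Prop Local L2}: there the indices $k+s^2-1$ and $k+s-1$ collapse to $s^2$ and $s$, and the conclusion is (\ref{Ch4-k=1}). So I would assume Theorem \ref{Thm subsubelliptic} holds at all orders $1,\dots,k-1$, for every admissible pair of cutoffs, and prove it at order $k$. Given $\zeta_1 F\in L^2((0,T),W_{X^R}^{k+s^2-1,2}(\mathbb{G}))$, the inclusion $W_{X^R}^{k+s^2-1,2}\subset W_{X^R}^{(k-1)+s^2-1,2}$ lets me apply the inductive hypothesis to $u$ itself, which already yields $\zeta u\in L^2((0,T),W_{X^R}^{k-1,2})$ and controls all derivatives of $\zeta u$ of order $\leqslant k-1$ by the right-hand side of (\ref{regularity step k}) (note $(k-1)+s-1\leqslant k+s-1$). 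Thus the whole problem reduces to bounding the top-order derivatives $X_{i_1}^R\cdots X_{i_k}^R(\zeta u)$.

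For the top order I would use that a right invariant field commutes with the left invariant operator $\mathcal{L}$ (Proposition \ref{Prop left and right commute}), so that $X_j^R u$ solves $\mathcal{L}(X_j^R u)=X_j^R F$ with vanishing initial datum (since $X_j^R$ is purely spatial). Fixing cutoffs $\zeta\prec\tilde\zeta\prec\tilde\zeta_1\prec\zeta_1$ and writing $\tilde\zeta_1 X_j^R F=X_j^R(\tilde\zeta_1 F)-(X_j^R\tilde\zeta_1)F$, the hypothesis guarantees $\tilde\zeta_1 X_j^R F\in W_{X^R}^{(k-1)+s^2-1,2}$, so the inductive hypothesis at order $k-1$ applies to $X_j^R u$ and gives
\[
\|\tilde\zeta\,X_j^R u\|_{W_{X^R}^{k-1,2}(\mathbb{G}_T)}\leqslant c\Big\{\|\tilde\zeta_1 X_j^R F\|_{L^2((0,T),W_{X^R}^{(k-1)+s-1,2})}+\|\tilde\zeta_1 X_j^R u\|_{L^2(\mathbb{G}_T)}\Big\}.
\]
Here the first term is bounded by $\|\zeta_1 F\|_{L^2((0,T),W_{X^R}^{k+s-1,2})}$, which is precisely the index in (\ref{regularity step k}), while the second is at most $\|\tilde\zeta_1 u\|_{L^2((0,T),W_{X^R}^{1,2})}$ and is controlled by the base case applied to $u$. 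Summing over $j$ and over all $(k-1)$-tuples of indices, and expanding $X_{i_1}^R\cdots X_{i_k}^R(\zeta u)$ by the Leibniz rule — whose remainders are derivatives of $u$ of order $\leqslant k-1$ already handled in the first paragraph — recovers the top-order bound and hence (\ref{regularity step k}). The index accounting is consistent: each induction step trades one derivative of $u$ against one derivative of $F$, so the gap between the hypothesis index $s^2$ and the estimate index $s$ is preserved throughout.

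The main obstacle is that for a merely weak solution the identity $\mathcal{L}(X_j^R u)=X_j^R F$ is not immediately meaningful: $u$ lies only in $W^{1,2}((0,T),L^2_{loc})$, so $\partial_t(X_j^R u)=X_j^R\partial_t u$ need not be an $L^2_{loc}$ function and $X_j^R u$ need not fit the weak formulation (\ref{def weak sol}) required by the inductive hypothesis. I would handle this exactly as in the proof of Proposition \ref{Ch4-Prop Local L2}, by first running the whole argument on the spatial mollifications $u_\varepsilon=\phi_\varepsilon\ast u\in\mathcal{H}_0$, which are smooth in $x$, satisfy $\mathcal{L}u_\varepsilon=F_\varepsilon$ a.e. (cf. (\ref{Ch4-L f epsi})), and for which $X_j^R u_\varepsilon\in\mathcal{H}_0$ is a bona fide solution to which the estimates legitimately apply. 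This produces a bound on $\|\zeta u_\varepsilon\|_{W_{X^R}^{k,2}}$ that is uniform in $\varepsilon$; passing to the limit by the same device as in Proposition \ref{Ch4-Prop Local L2} (weak compactness together with $\zeta u_\varepsilon\to\zeta u$ in $L^2$ to identify the limit) yields $\zeta u\in L^2((0,T),W_{X^R}^{k,2})$ and (\ref{regularity step k}). The one point that needs care in the limit is the convergence of the right-hand side: since the left mollification is compatible with left invariant derivatives but not directly with the $X^R$ appearing in the norms, one bounds the $W_{X^R}$-errors first by Euclidean Sobolev norms and then, via H\"{o}rmander's condition, by $W_X$-norms of $F_\varepsilon-F$, which do converge to zero — this is precisely where the surplus index $s^2$ in the hypothesis is consumed.
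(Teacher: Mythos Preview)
Your proposal is correct and follows essentially the same strategy as the paper: induction on $k$ with base case Proposition~\ref{Ch4-Prop Local L2}, the key commutation of $X^{R}$ with $\mathcal{L}$, mollification $u_{\varepsilon}=\phi_{\varepsilon}\ast u$ to make that commutation rigorous, and weak compactness to pass to the limit, with the surplus index $s^{2}$ absorbed exactly as you describe via the chain $W_{X^{R}}^{s,2}\hookleftarrow W^{s,2}\hookleftarrow W_{X}^{s^{2},2}$. The only organizational difference is that the paper applies the base case directly to $X_{I}^{R}u_{\varepsilon}$ for each $|I|\leqslant k$ and then iterates downward to absorb the term $\Vert\zeta_{1}X_{I}^{R}u_{\varepsilon}\Vert_{L^{2}}$, whereas you peel off one $X_{j}^{R}$ and invoke the full inductive hypothesis at level $k-1$; these are equivalent unrollings of the same recursion.
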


\begin{proof}
We will prove (\ref{regularity step k}) by induction on $k$. For $k=1$ this is
exactly Proposition \ref{Ch4-Prop Local L2}. Assume that
(\ref{regularity step k}) holds up to an integer $k$ and let $u\in
\mathcal{H}_{0}$ such that $\mathcal{L}u\in L^{2}\left(  \left(  0,T\right)
,W_{X^{R}}^{k+s^{2},2}\left(  \mathbb{G}\right)  \right)  $. By the inductive
assumption, $\zeta u\in L^{2}\left(  \left(  0,T\right)  ,W_{X^{R}}%
^{k,2}\left(  \mathbb{G}\right)  \right)  $. Let $X_{I}^{R}$ be a right
invariant differential operator with $\left\vert I\right\vert \leqslant k$,
then $\zeta X_{I}^{R}u\in L^{2}\left(  \mathbb{G}_{T}\right)  $. We would like
to apply Proposition \ref{Ch4-Prop Local L2} to $X_{I}^{R}u$, but in order to
do that we would need to know that $X_{I}^{R}u\in W^{1,2}\left(  \left(
0,T\right)  ,L_{loc}^{2}\left(  \mathbb{G}\right)  \right)  $ with $X_{I}%
^{R}u\left(  0,\cdot\right)  =0$, which is unclear. Then, let $u_{\varepsilon
}$ be the mollified version of $u$ as in the proof of Proposition
\ref{Ch4-Prop Local L2}, so that:%
\[
X_{I}^{R}\left(  u_{\varepsilon}\right)  \left(  t,x\right)  =\int%
_{\mathbb{G}}\left(  X_{I}^{R}\phi_{\varepsilon}\right)  \left(  x\circ
z^{-1}\right)  u\left(  t,z\right)  dz
\]
which is a smooth function in $x$, and since $X_{I}^{R}\phi_{\varepsilon}$ is
integrable (although its $L^{1}\left(  \mathbb{G}\right)  $ norm is not
uniformly bounded with respect to $\varepsilon$) we have
\[
X_{I}^{R}\left(  u_{\varepsilon}\right)  \in L^{2}\left(  \left(  0,T\right)
,L_{loc}^{2}\left(  \mathbb{G}\right)  \right)
\]
(see (\ref{local Young})) and since $\partial_{t}u\in L^{2}\left(  \left(
0,T\right)  ,L_{loc}^{2}\left(  \mathbb{G}\right)  \right)  $, the same is
true for $\partial_{t}X_{I}^{R}\left(  u_{\varepsilon}\right)  $, which equals
$X_{I}^{R}\left(  \partial_{t}u\right)  _{\varepsilon}$. Then%
\[
X_{I}^{R}\left(  u_{\varepsilon}\right)  \in W^{1,2}\left(  \left(
0,T\right)  ,L_{loc}^{2}\left(  \mathbb{G}\right)  \right)
\]
which also implies%
\[
X_{I}^{R}\left(  u_{\varepsilon}\right)  \left(  0,x\right)  =\int%
_{\mathbb{G}}\left(  X_{I}^{R}\phi_{\varepsilon}\right)  \left(  x\circ
z^{-1}\right)  u\left(  0,z\right)  dz=0
\]
since $u\left(  0,\cdot\right)  =0$ in $L^{2}\left(  \mathbb{G}\right)  $. We
claim that%
\[
\mathcal{L}\left(  X_{I}^{R}\left(  u_{\varepsilon}\right)  \right)
=X_{I}^{R}\left(  \mathcal{L}\left(  u_{\varepsilon}\right)  \right)
=X_{I}^{R}\left(  F_{\varepsilon}\right)
\]
at least in weak sense.\ Actually, noting that $\mathcal{L}$ and $X_{I}^{R}$
commute,%
\begin{align*}
\int_{\mathbb{G}}\mathcal{L}\left(  X_{I}^{R}\left(  u_{\varepsilon}\right)
\right)  \left(  t,x\right)  \varphi\left(  x\right)  dx &  =\int_{\mathbb{G}%
}X_{I}^{R}\left(  \mathcal{L}\left(  u_{\varepsilon}\right)  \right)  \left(
t,x\right)  \varphi\left(  x\right)  dx\\
&  =-\int_{\mathbb{G}}\mathcal{L}\left(  u_{\varepsilon}\right)  \left(
t,x\right)  \left(  X_{I}^{R}\varphi\right)  \left(  x\right)  dx
\end{align*}
since $X_{I}^{R}\varphi\in C_{0}^{\infty}\left(  \mathbb{G}\right)  $ and
$\mathcal{L}\left(  u_{\varepsilon}\right)  =F_{\varepsilon}$ for a.e. $t$ and
$x$ (see (\ref{Ch4-L f epsi}))%
\[
=-\int_{\mathbb{G}}F_{\varepsilon}\left(  t,x\right)  \left(  X_{I}^{R}%
\varphi\right)  \left(  x\right)  dx=\int_{\mathbb{G}}X_{I}^{R}\left(
F_{\varepsilon}\right)  \left(  t,x\right)  \varphi\left(  x\right)  dx
\]
for a.e. $t$. Therefore we can apply Proposition \ref{Ch4-Prop Local L2} to
$X_{I}^{R}\left(  u_{\varepsilon}\right)  $ getting%
\begin{align*}
&  \left\Vert \zeta X_{I}^{R}\left(  u_{\varepsilon}\right)  \right\Vert
_{L^{2}\left(  \left(  0,T\right)  ,W_{X^{R}}^{1,2}\left(  \mathbb{G}\right)
\right)  }\\
&  \leqslant c\left\{  \left\Vert \zeta_{1}X_{I}^{R}\left(  F_{\varepsilon
}\right)  \right\Vert _{L^{2}\left(  \left(  0,T\right)  ,W_{X^{R}}%
^{s,2}\left(  \mathbb{G}\right)  \right)  }+\left\Vert \zeta_{1}X_{I}%
^{R}\left(  u_{\varepsilon}\right)  \right\Vert _{L^{2}\left(  \mathbb{G}%
_{T}\right)  }\right\}  .
\end{align*}
Noting that%
\[
\left\Vert \zeta_{1}X_{I}^{R}\left(  u_{\varepsilon}\right)  \right\Vert
_{L^{2}\left(  \mathbb{G}_{T}\right)  }\leqslant\left\Vert \zeta
_{1}X_{I^{\prime}}^{R}\left(  u_{\varepsilon}\right)  \right\Vert
_{L^{2}\left(  \left(  0,T\right)  ,W_{X^{R}}^{1,2}\left(  \mathbb{G}\right)
\right)  }%
\]
for some $I^{\prime}$ with $\left\vert I^{\prime}\right\vert =\left\vert
I\right\vert -1$, we can proceed iteratively getting, for some different
cutoff function $\zeta_{2}\succ\zeta_{1}$,%
\begin{equation}
\left\Vert \zeta X_{I}^{R}\left(  u_{\varepsilon}\right)  \right\Vert
_{L^{2}\left(  \left(  0,T\right)  ,W_{X^{R}}^{1,2}\left(  \mathbb{G}\right)
\right)  }\leqslant c\left\{  \left\Vert \zeta_{2}X_{I}^{R}\left(
F_{\varepsilon}\right)  \right\Vert _{L^{2}\left(  \left(  0,T\right)
,W_{X^{R}}^{s,2}\left(  \mathbb{G}\right)  \right)  }+\left\Vert \zeta
_{2}u_{\varepsilon}\right\Vert _{L^{2}\left(  \mathbb{G}_{T}\right)
}\right\}  .\label{k bound epsi}%
\end{equation}
From this bound, which is uniform with respect to $\varepsilon$, reasoning
like in the proof of Proposition \ref{Ch4-Prop Local L2} we read that, under
the assumption $X_{I}^{R}F\in L^{2}\left(  \left(  0,T\right)  ,W_{X^{R}%
}^{s^{2},2}\left(  \mathbb{G}\right)  \right)  $, which is true as soon as
$F\in L^{2}\left(  \left(  0,T\right)  ,W_{X^{R}}^{k+s^{2},2}\left(
\mathbb{G}\right)  \right)  $, we have the uniform boundedness of%
\[
\left\Vert \zeta X_{I}^{R}\left(  u_{\varepsilon}\right)  \right\Vert
_{L^{2}\left(  \left(  0,T\right)  ,W_{X^{R}}^{1,2}\left(  \mathbb{G}\right)
\right)  },
\]
which implies the weak convergence in $L^{2}\left(  \left(  0,T\right)
,W_{X^{R}}^{1,2}\left(  \mathbb{G}\right)  \right)  $ of (a subsequence of)
$\zeta X_{I}^{R}\left(  u_{\varepsilon}\right)  $ to some $g.$ In particular
the convergence is in $L^{2}\left(  \mathbb{G}_{T}\right)  ,$ which implies
that for every $\eta\in L^{2}\left(  0,T\right)  $ and $\phi\in C_{0}^{\infty
}\left(  \mathbb{G}\right)  $
\[
\int_{0}^{T}\eta\left(  t\right)  \int_{\mathbb{G}}\zeta\left(  x\right)
X_{I}^{R}\left(  u_{\varepsilon}\right)  \left(  t,x\right)  \phi\left(
x\right)  dxdt\rightarrow\int_{0}^{T}\eta\left(  t\right)  \int_{\mathbb{G}%
}g\left(  t,x\right)  \phi\left(  x\right)  dxdt.
\]
Pick the cutoff function $\zeta\left(  x\right)  =1$ on some bounded open set
$\Omega$, then for every $\phi\in C_{0}^{\infty}\left(  \Omega\right)  $ we
have%
\[
\int_{0}^{T}\eta\left(  t\right)  \int_{\mathbb{G}}X_{I}^{R}\left(
u_{\varepsilon}\right)  \left(  t,x\right)  \phi\left(  x\right)
dxdt\rightarrow\int_{0}^{T}\eta\left(  t\right)  \int_{\mathbb{G}}g\left(
t,x\right)  \phi\left(  x\right)  dxdt.
\]
On the other hand,%
\begin{align*}
&  \int_{0}^{T}\eta\left(  t\right)  \int_{\mathbb{G}}X_{I}^{R}\left(
u_{\varepsilon}\right)  \left(  t,x\right)  \phi\left(  x\right)  dxdt\\
&  =\left(  -1\right)  ^{\left\vert I\right\vert }\int_{0}^{T}\eta\left(
t\right)  \int_{\mathbb{G}}u_{\varepsilon}\left(  t,x\right)  X_{I}^{R}%
\phi\left(  x\right)  dxdt\\
&  \rightarrow\left(  -1\right)  ^{\left\vert I\right\vert }\int_{0}^{T}%
\eta\left(  t\right)  \int_{\mathbb{G}}u\left(  t,x\right)  X_{I}^{R}%
\phi\left(  x\right)  dxdt,
\end{align*}
hence%
\[
\int_{0}^{T}\eta\left(  t\right)  \int_{\mathbb{G}}g\left(  t,x\right)
\phi\left(  x\right)  dxdt=\left(  -1\right)  ^{\left\vert I\right\vert }%
\int_{0}^{T}\eta\left(  t\right)  \int_{\mathbb{G}}u\left(  t,x\right)
X_{I}^{R}\phi\left(  x\right)  dxdt
\]
which implies, for a.e. $t$ and a.e. $x\in\Omega$,
\[
g\left(  t,x\right)  =X_{I}^{R}u\left(  t,x\right)
\]
in the sense of weak derivatives. This means that $\zeta X_{I}^{R}u\in
L^{2}\left(  \left(  0,T\right)  ,W_{X^{R}}^{1,2}\left(  \mathbb{G}\right)
\right)  $ and $\zeta X_{I}^{R}\left(  u_{\varepsilon}\right)  \rightarrow
\zeta X_{I}^{R}u$ weakly in $L^{2}\left(  \left(  0,T\right)  ,W_{X^{R}}%
^{1,2}\left(  \mathbb{G}\right)  \right)  $, which also implies, by
(\ref{k bound epsi}),%
\[
\left\Vert \zeta X_{I}^{R}u\right\Vert _{L^{2}\left(  \left(  0,T\right)
,W_{X^{R}}^{1,2}\left(  \mathbb{G}\right)  \right)  }\leqslant c\left\{
\left\Vert \zeta_{2}X_{I}^{R}F\right\Vert _{L^{2}\left(  \left(  0,T\right)
,W_{X^{R}}^{s,2}\left(  \mathbb{G}\right)  \right)  }+\left\Vert \zeta
_{2}u\right\Vert _{L^{2}\left(  \mathbb{G}_{T}\right)  }\right\}  .
\]
So we are done.
\end{proof}

Next, we want to derive from the previous result the fact that, for $F$ smooth
enough, weak solutions to $\mathcal{L}u=F$ are actually strong solutions.
Also, we want to establish H\"{o}lder continuity with respect to time of
solutions (and their space derivatives):

\begin{theorem}
\label{Thm subell plus}Let $u\in W^{1,2}\left(  \left(  0,T\right)
,L_{loc}^{2}\left(  \mathbb{G}\right)  \right)  ,u\left(  0,\cdot\right)  =0$,
be a weak solution to $\mathcal{L}u=F\in L^{2}\left(  \left(  0,T\right)
,L_{loc}^{2}\left(  \mathbb{G}\right)  \right)  $ in the sense of
(\ref{def weak sol}).

(i) For any $k=0,1,2,3,...$
\[
\text{if }F\in L^{2}\left(  \left(  0,T\right)  ,W_{X^{R},loc}^{k+s^{2}%
+2s-1,2}\left(  \mathbb{G}\right)  \right)  \text{ then }u\in W^{1,2}\left(
\left(  0,T\right)  ,W_{X^{R},loc}^{k+2s,2}\left(  \mathbb{G}\right)  \right)
\]
and $u$ is also a strong solution to $\mathcal{L}u=F$. In particular, for
every multiindex $I$ with $\left\vert I\right\vert \leqslant k$ we have
\[
X_{I}^{R}u\in C^{0}\left(  \left[  0,T\right]  ,L_{loc}^{2}\left(
\mathbb{G}\right)  \right)  \text{ and }X_{I}^{R}u\left(  0,\cdot\right)  =0.
\]

(ii) For every (cartesian) derivative $\partial_{x}^{\alpha}$ and $\zeta
,\zeta_{1}\in C_{0}^{\infty}\left(  \mathbb{G}\right)  ,\zeta\prec\zeta_{1}$,
there exists $c=c\left(  \alpha,\zeta,\zeta_{1},\mathbb{G},\nu\right)  >0$ and
a positive integer $h$ such that whenever $F\in L^{2}\left(  \left(
0,T\right)  ,W_{X^{R},loc}^{h,2}\left(  \mathbb{G}\right)  \right)  $ then
\begin{align*}
& \sup_{0<t_{1}<t_{2}<T}\sup_{x\in\mathbb{G}}\frac{\left\vert \zeta\left(
x\right)  \left[  \partial_{x}^{\alpha}u\left(  t_{2},x\right)  -\partial
_{x}^{\alpha}u\left(  t_{1},x\right)  \right]  \right\vert }{\left\vert
t_{2}-t_{1}\right\vert ^{1/2}}\\
& \leqslant c\left\{  \left\Vert \zeta_{1}F\right\Vert _{L^{2}\left(  \left(
0,T\right)  ,W_{X^{R}}^{h,2}\left(  \mathbb{G}\right)  \right)  }+\left\Vert
\zeta_{1}u\right\Vert _{L^{2}\left(  \mathbb{G}_{T}\right)  }\right\}
\end{align*}
and%
\[
\sup_{x\in\mathbb{G}}\left\vert \zeta\left(  x\right)  \partial_{x}^{\alpha
}u\left(  t,x\right)  \right\vert \leqslant c\left\vert t\right\vert
^{1/2}\left\{  \left\Vert \zeta_{1}F\right\Vert _{L^{2}\left(  \left(
0,T\right)  ,W_{X^{R}}^{h,2}\left(  \mathbb{G}\right)  \right)  }+\left\Vert
\zeta_{1}u\right\Vert _{L^{2}\left(  \mathbb{G}_{T}\right)  }\right\}  \text{
}\forall t\in\left[  0,T\right]  .
\]

(iii) In particular, if
\[
\zeta_{1}F\in L^{2}\left(  \left(  0,T\right)  ,C^{\infty}\left(
\mathbb{G}\right)  \right)
\]
then%
\[
\zeta u\in C^{0}\left(  \left[  0,T\right]  ,C^{\infty}\left(  \mathbb{G}%
\right)  \right)  \text{ and }\zeta u_{t}\in L^{2}\left(  \left(  0,T\right)
,C^{\infty}\left(  \mathbb{G}\right)  \right)  .
\]

\end{theorem}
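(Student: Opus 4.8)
The plan is to bootstrap from the purely spatial regularity of Theorem~\ref{Thm subsubelliptic} to joint space--time information, by reading the time derivative directly off the equation and then integrating in $t$ by Cauchy--Schwarz.

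For (i) I would first apply Theorem~\ref{Thm subsubelliptic} with $k+2s$ in place of $k$: since $\left(k+2s\right)+s^{2}-1=k+s^{2}+2s-1$, the hypothesis $\zeta_{1}F\in L^{2}((0,T),W_{X^{R}}^{k+s^{2}+2s-1,2})$ yields $\zeta u\in L^{2}((0,T),W_{X^{R}}^{k+2s,2})$, i.e. full spatial regularity of order $k+2s$. The time derivative is then recovered from $\partial_{t}u=\mathcal{A}u-F$, where $\mathcal{A}=\sum_{i,j}a_{ij}(t)X_{i}X_{j}$ is, for each fixed $t$, \emph{left invariant} and hence commutes with every $X_{I}^{R}$ by Proposition~\ref{Prop left and right commute}; thus $X_{I}^{R}\mathcal{A}u=\mathcal{A}(X_{I}^{R}u)$. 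Since a second order left invariant operator costs $2s$ right invariant orders locally (one cartesian derivative being controlled by $s$ right invariant ones through the $X^{R}$-analogue of Proposition~\ref{Prop reg sobolev}), one gets $\mathcal{A}u$, and with it $\partial_{t}u$, in $L^{2}((0,T),W_{X^{R},loc}^{k,2})$. Consequently, for $\left\vert I\right\vert \leqslant k$ both $X_{I}^{R}u$ and $\partial_{t}X_{I}^{R}u=X_{I}^{R}\partial_{t}u$ are in $L^{2}$ in time, so $X_{I}^{R}u\in C^{0}([0,T],L_{loc}^{2})$; the vanishing trace $X_{I}^{R}u(0,\cdot)=0$ follows by passing to the limit in $X_{I}^{R}u_{\varepsilon}(0,\cdot)=0$ for the mollifications $u_{\varepsilon}$ of the proof of Theorem~\ref{Thm subsubelliptic}, since the initial condition cannot be differentiated directly. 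The spatial $W_{X}^{2,2}$-regularity (again via the cartesian scale) together with $\partial_{t}u\in L_{loc}^{2}$ finally makes $\mathcal{L}u=F$ hold a.e., i.e. $u$ is a strong solution.

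Part (ii) is the heart of the matter and rests on a Cauchy--Schwarz integration in time. For a fixed cartesian multiindex $\alpha$ and $0<t_{1}<t_{2}<T$ I would write
\[
\zeta(x)\left[\partial_{x}^{\alpha}u(t_{2},x)-\partial_{x}^{\alpha}u(t_{1},x)\right]=\int_{t_{1}}^{t_{2}}\zeta(x)\,\partial_{x}^{\alpha}\partial_{t}u(\tau,x)\,d\tau ,
\]
take the supremum in $x$ under the integral sign and apply Cauchy--Schwarz in $\tau$, which produces the factor $\left\vert t_{2}-t_{1}\right\vert ^{1/2}$ and leaves $(\int_{0}^{T}\Vert\zeta\,\partial_{x}^{\alpha}\partial_{t}u(\tau,\cdot)\Vert_{L^{\infty}}^{2}\,d\tau)^{1/2}$. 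The sup-norm is controlled by the Euclidean embedding $W^{m,2}\hookrightarrow L^{\infty}$ with $m>N/2$ and then by the $X^{R}$-analogue of Proposition~\ref{Prop reg sobolev}, giving $\Vert\zeta\,\partial_{x}^{\alpha}\partial_{t}u(\tau,\cdot)\Vert_{L^{\infty}}\lesssim\Vert\zeta'\,\partial_{t}u(\tau,\cdot)\Vert_{W_{X^{R}}^{(\left\vert\alpha\right\vert+m)s,2}}$; writing once more $\partial_{t}u=\mathcal{A}u-F$ and invoking the estimate of part (i) together with Theorem~\ref{Thm subsubelliptic}, the $\tau$-integral is bounded by the data for a suitable integer $h=(\left\vert\alpha\right\vert+m)s+2s+s^{2}-1$. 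The second inequality of (ii) is the same computation with $t_{1}=0$, using that $\partial_{x}^{\alpha}u(0,\cdot)=0$, which follows from $u(0,\cdot)=0$ and the convergence $u(t,\cdot)\to0$ in high $W_{X^{R}}$-norm as $t\to0$ supplied by (i).

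Part (iii) is then immediate: if $\zeta_{1}F\in L^{2}((0,T),C^{\infty}(\mathbb{G}))$ then $\zeta_{1}F\in L^{2}((0,T),W_{X^{R},loc}^{h,2})$ for every $h$, so (ii) applies to every $\alpha$ and gives a uniform-in-$x$ modulus of continuity in $t$ for each $\zeta\,\partial_{x}^{\alpha}u$; combined with the spatial smoothness $\bigcap_{m}W_{X^{R}}^{m,2}\subset C^{\infty}$ from Proposition~\ref{Prop reg sobolev} this yields $\zeta u\in C^{0}([0,T],C^{\infty}(\mathbb{G}))$. For $\zeta u_{t}$ I would again use $u_{t}=\mathcal{A}u-F$: since $u\in L^{2}((0,T),W_{X^{R},loc}^{m,2})$ for all $m$ and $F\in L^{2}((0,T),C^{\infty})$, the same commutation estimate places $u_{t}$ in $L^{2}((0,T),W_{X^{R},loc}^{m,2})$ for all $m$, whence $\zeta u_{t}\in L^{2}((0,T),C^{\infty}(\mathbb{G}))$. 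The step I expect to be most delicate is the bookkeeping in (i): tracking that $\mathcal{A}$ costs exactly $2s$ right invariant orders (so that $\partial_{t}u$ sits two subelliptic orders below $u$), and rigorously securing both the $W^{1,2}$-in-time structure and the vanishing initial trace of $X_{I}^{R}u$, which — as already in Theorem~\ref{Thm subsubelliptic} — must be routed through the mollified solutions $u_{\varepsilon}$ rather than obtained by differentiating $u(0,\cdot)=0$.
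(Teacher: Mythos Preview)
Your proposal is correct and follows essentially the same strategy as the paper: apply Theorem~\ref{Thm subsubelliptic} at order $k+2s$, read $\partial_{t}u=\mathcal{A}u-F$ off the equation (using the embedding $W_{X^{R},loc}^{2s,2}\subset W_{X,loc}^{2,2}$ to make $u$ a strong solution first), then integrate in $t$ with Cauchy--Schwarz and finish by Sobolev embedding.

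One point is worth flagging. You single out the vanishing trace $X_{I}^{R}u(0,\cdot)=0$ as the delicate step and say it ``must be routed through the mollified solutions $u_{\varepsilon}$''. The paper avoids mollification entirely here: once $u\in W^{1,2}((0,T),W_{X^{R},loc}^{k,2})$ is established, one simply writes
\[
u(t_{2},x)=\int_{0}^{t_{2}}\partial_{t}u(t,x)\,dt
\]
(from $u(0,\cdot)=0$) and applies $X_{I}^{R}$ under the integral, which is legitimate because $X_{I}^{R}\partial_{t}u\in L^{2}(\mathbb{G}_{T})$ for $|I|\leqslant k$. This yields $X_{I}^{R}u(t_{2},x)=\int_{0}^{t_{2}}X_{I}^{R}\partial_{t}u(t,x)\,dt$ and hence $X_{I}^{R}u(0,\cdot)=0$ directly. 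Your mollification route would also work, but the integral representation is cleaner and sidesteps the convergence-of-traces issue entirely. For part~(ii) the paper first takes $L^{2}(\mathbb{G})$-norms of $X_{I}^{R}u(t_{2},\cdot)-X_{I}^{R}u(t_{1},\cdot)$ and only afterwards passes to Cartesian derivatives and the sup-norm, while you go straight to the pointwise sup; both orderings are fine.
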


\begin{proof}
Let $\zeta\in C_{0}^{\infty}\left(  \mathbb{G}\right)  $ and $u\in
W_{X^{R},loc}^{2s,2}\left(  \mathbb{G}\right)  $. Inequalities%
\[
\left\Vert \zeta u\right\Vert _{W_{X}^{2,2}\left(  \mathbb{G}\right)
}\leqslant c\left\Vert \zeta u\right\Vert _{W^{2,2}\left(  \mathbb{G}\right)
}\leqslant c\left\Vert \zeta u\right\Vert _{W_{X^{R}}^{2s,2}\left(
\mathbb{G}\right)  }%
\]
show that%
\[
W_{X^{R},loc}^{2s,2}\left(  \mathbb{G}\right)  \subset W_{X,loc}^{2,2}\left(
\mathbb{G}\right)  .
\]

Let $u\in W^{1,2}\left(  \left(  0,T\right)  ,L_{loc}^{2}\left(
\mathbb{G}\right)  \right)  ,u\left(  0,\cdot\right)  =0$, be a weak solution
to $\mathcal{L}u=F\in L^{2}\left(  \left(  0,T\right)  ,W_{X^{R},loc}%
^{h,2}\left(  \mathbb{G}\right)  \right)  $. By Theorem
\ref{Thm subsubelliptic}, if $\zeta_{1}F\in L^{2}\left(  \left(  0,T\right)
,W_{X^{R}}^{k+s^{2}-1,2}\left(  \mathbb{G}\right)  \right)  $, then $\zeta
u\in L^{2}\left(  \left(  0,T\right)  ,W_{X^{R}}^{k,2}\left(  \mathbb{G}%
\right)  \right)  $. In particular, if $h\geq2s+s^{2}-1$ then $u\in
L^{2}\left(  \left(  0,T\right)  ,W_{X,loc}^{2,2}\left(  \mathbb{G}\right)
\right)  $ and this implies that $u$ is actually a strong solution to the
equation $\mathcal{L}u=F$, so that for a.e. $t$ and a.e. $x$ we have%
\begin{equation}
-u_{t}\left(  t,x\right)  +\sum_{i,j=1}^{q}a_{ij}\left(  t\right)  X_{i}%
X_{j}u\left(  t,x\right)  =F\left(  t,x\right)  .\label{identity strong sol}%
\end{equation}
This identity allows to transfer further $x$-regularity of both $F$ and $u$ to
$u_{t}$: if, for some $k=1,2,3,...$, we know that $h\geq k+2s+s^{2}-1$, then
by Theorem \ref{Thm subsubelliptic} $u\in L^{2}\left(  \left(  0,T\right)
,W_{X^{R},loc}^{k+2s,2}\left(  \mathbb{G}\right)  \right)  $, so that
$X_{i}X_{j}u\in L^{2}\left(  \left(  0,T\right)  ,W_{X^{R},loc}^{k,2}\left(
\mathbb{G}\right)  \right)  $, hence by (\ref{identity strong sol}) $u_{t}\in
L^{2}\left(  \left(  0,T\right)  ,W_{X^{R},loc}^{k,2}\left(  \mathbb{G}%
\right)  \right)  $ and $u\in W^{1,2}\left(  \left(  0,T\right)
,W_{X^{R},loc}^{k,2}\left(  \mathbb{G}\right)  \right)  $.

This implies that for $\left\vert I\right\vert \leqslant k$, $X_{I}^{R}u\in
C^{0}\left(  \left[  0,T\right]  ,L_{loc}^{2}\left(  \mathbb{G}\right)
\right)  $. Moreover we can write, for every $t_{1},t_{2}\in\left[
0,T\right]  $ and a.e. $x\in\mathbb{G}$,%
\begin{align}
u\left(  t_{2},x\right)  -u\left(  t_{1},x\right)   &  =\int_{t_{1}}^{t_{2}%
}\partial_{t}u\left(  t,x\right)  dt\label{Holder}\\
X_{I}^{R}u\left(  t_{2},x\right)  -X_{I}^{R}u\left(  t_{1},x\right)   &
=\int_{t_{1}}^{t_{2}}\partial_{t}X_{I}^{R}u\left(  t,x\right)
dt.\label{Holder 2}%
\end{align}
Letting $t_{1}=0$ in (\ref{Holder}) we get%
\[
u\left(  t_{2},x\right)  =\int_{0}^{t_{2}}\partial_{t}u\left(  t,x\right)  dt,
\]
an identity which can also be differentiated with respect to $X_{I}^{R}$,
giving%
\[
X_{I}^{R}u\left(  t_{2},x\right)  =\int_{0}^{t_{2}}X_{I}^{R}\partial
_{t}u\left(  t,x\right)  dt,
\]
which implies%
\[
X_{I}^{R}u\left(  0,\cdot\right)  =0.
\]
This completes the proof of (i). Next, multiplying both sides of
(\ref{Holder 2}) for $\zeta\in C_{0}^{\infty}\left(  \mathbb{G}\right)  $ and
taking $L^{2}\left(  \mathbb{G}\right)  $-norms we get, recalling that
$X_{I}^{R}$ commutes with $\mathcal{L}$:%
\begin{align*}
&  \int_{\mathbb{G}}\zeta\left(  x\right)  ^{2}\left\vert X_{I}^{R}u\left(
t_{2},x\right)  -X_{I}^{R}u\left(  t_{2},x\right)  \right\vert ^{2}dx\\
&  \leqslant\int_{\mathbb{G}}\zeta\left(  x\right)  ^{2}\left\vert \int%
_{t_{1}}^{t_{2}}\left\{  -X_{I}^{R}\mathcal{L}u\left(  t,x\right)
+\sum_{i,j=1}^{q}a_{ij}\left(  t\right)  X_{i}X_{j}X_{I}^{R}u\left(
t,x\right)  \right\}  dt\right\vert ^{2}dx\\
&  \leqslant\int_{\mathbb{G}}\zeta\left(  x\right)  ^{2}\left(  \int_{t_{1}%
}^{t_{2}}\left\{  \left\vert X_{I}^{R}F\left(  t,x\right)  \right\vert
+c_{\nu}\sum_{i,j=1}^{q}\left\vert X_{i}X_{j}X_{I}^{R}u\left(  t,x\right)
\right\vert \right\}  dt\right)  ^{2}dx\\
&  \leqslant\int_{\mathbb{G}}\zeta\left(  x\right)  ^{2}\left\vert t_{2}%
-t_{1}\right\vert \left\{  \int_{0}^{T}\left\vert X_{I}^{R}F\left(
t,x\right)  \right\vert ^{2}dt+c_{\nu}\sum_{i,j=1}^{q}\int_{0}^{T}\left\vert
X_{i}X_{j}X_{I}^{R}u\left(  t,x\right)  \right\vert ^{2}dt\right\}  dx\\
&  =\left\vert t_{2}-t_{1}\right\vert \left\{  \left\Vert \zeta X_{I}%
^{R}F\right\Vert _{L^{2}\left(  \mathbb{G}_{T}\right)  }^{2}+c_{\nu}%
\sum_{i,j=1}^{q}\left\Vert \zeta X_{i}X_{j}X_{I}^{R}u\right\Vert
_{L^{2}\left(  \mathbb{G}_{T}\right)  }^{2}\right\}  .
\end{align*}
By Theorem \ref{Thm subsubelliptic} this implies that%
\begin{align*}
&  \sup_{0<t_{1}<t_{2}<T}\frac{\int_{\mathbb{G}}\zeta\left(  x\right)
^{2}\left\vert X_{I}^{R}u\left(  t_{2},x\right)  -X_{I}^{R}u\left(
t_{2},x\right)  \right\vert ^{2}dx}{\left\vert t_{2}-t_{1}\right\vert }\\
&  \leqslant c\left\{  \left\Vert \zeta_{1}F\right\Vert _{L^{2}\left(  \left(
0,T\right)  ,W_{X^{R}}^{h,2}\left(  \mathbb{G}\right)  \right)  }+\left\Vert
\zeta_{1}u\right\Vert _{L^{2}\left(  \mathbb{G}_{T}\right)  }\right\}  ^{2}%
\end{align*}
for some $h\mathcal{\ }$large enough and any cutoff function $\zeta_{1}$ such
that $\zeta\prec\zeta_{1}$. On the other hand, letting
\[
v\left(  x\right)  =u\left(  t_{2},x\right)  -u\left(  t_{2},x\right)
\]
and noting that every cartesian derivative $\partial_{x}^{\alpha}v\left(
x\right)  $ can be bounded, uniformly on a compact set of $\mathbb{G}$ by a
suitable linear combination of $X_{I}^{R}v$, we arrive to a bound%
\begin{align*}
& \sup_{0<t_{1}<t_{2}<T}\frac{\left\Vert \zeta\left[  \partial_{x}^{\alpha
}u\left(  t_{2},\cdot\right)  -\partial_{x}^{\alpha}u\left(  t_{1}%
,\cdot\right)  \right]  \right\Vert _{L^{2}\left(  \mathbb{G}\right)  }%
}{\left\vert t_{2}-t_{1}\right\vert ^{1/2}}\\
& \leqslant c\left\{  \left\Vert \zeta_{1}F\right\Vert _{L^{2}\left(  \left(
0,T\right)  ,W_{X^{R}}^{h_{1},2}\left(  \mathbb{G}\right)  \right)
}+\left\Vert \zeta_{1}u\right\Vert _{L^{2}\left(  \mathbb{G}_{T}\right)
}\right\}
\end{align*}
for some integer $h_{1}>h$. And since also the sup of $\left\vert \partial
_{x}^{\alpha}u\left(  t_{2},\cdot\right)  -\partial_{x}^{\alpha}u\left(
t_{1},\cdot\right)  \right\vert $ can be bounded, by Sobolev embeddings, by
suitable $L^{2}$ norms of higher order derivatives, we also have a control%
\begin{align*}
& \sup_{0<t_{1}<t_{2}<T}\sup_{x\in\mathbb{G}}\frac{\left\vert \zeta\left(
x\right)  \left[  \partial_{x}^{\alpha}u\left(  t_{2},x\right)  -\partial
_{x}^{\alpha}u\left(  t_{1},x\right)  \right]  \right\vert }{\left\vert
t_{2}-t_{1}\right\vert ^{1/2}}\\
& \leqslant c\left\{  \left\Vert \zeta_{1}F\right\Vert _{L^{2}\left(  \left(
0,T\right)  ,W_{X^{R}}^{h_{2},2}\left(  \mathbb{G}\right)  \right)
}+\left\Vert \zeta_{1}u\right\Vert _{L^{2}\left(  \mathbb{G}_{T}\right)
}\right\}
\end{align*}
for some integer $h_{2}>h_{1}$. Also, since $\partial_{x}^{\alpha}u\left(
0,x\right)  =0,$ this implies%
\[
\sup_{x\in\mathbb{G}}\left\vert \zeta\left(  x\right)  \partial_{x}^{\alpha
}u\left(  t,x\right)  \right\vert \leqslant c\left\vert t\right\vert
^{1/2}\left\{  \left\Vert \zeta_{1}F\right\Vert _{L^{2}\left(  \left(
0,T\right)  ,W_{X^{R}}^{h_{2},2}\left(  \mathbb{G}\right)  \right)
}+\left\Vert \zeta_{1}u\right\Vert _{L^{2}\left(  \mathbb{G}_{T}\right)
}\right\}  ,
\]
This ends the proof of (ii). The previous result also shows that%
\[
\zeta_{1}F\in L^{2}\left(  0,T\right)  ,C^{\infty}\left(  \mathbb{G}\right)
\Longrightarrow\zeta u\in C^{0}\left(  \left[  0,T\right]  ,C^{\infty}\left(
\mathbb{G}\right)  \right)  .
\]
Then the equality%
\[
u_{t}=\sum_{i,j=1}^{q}a_{ij}\left(  t\right)  X_{i}X_{j}u-F
\]
also implies that%
\[
\zeta u_{t}\in L^{2}\left(  \left(  0,T\right)  ,C^{\infty}\left(
\mathbb{G}\right)  \right)  .
\]

\end{proof}

We end this section with an easy example showing that the regularity
properties of the solution cannot be improved for bounded measurable
coefficients $a_{ij}\left(  t\right)  $.

\begin{example}
\label{contres parabolico}Let us consider the uniformly parabolic operator%
\[
\mathcal{L}u=-u_{t}+a\left(  t\right)  u_{xx}%
\]
with $a\in L^{\infty}\left(  \mathbb{R}\right)  $, $a\left(  t\right)  \geq
\nu>0$. The function%
\[
u\left(  t,x\right)  =\exp\left(  -\int_{0}^{t}a\left(  \tau\right)
d\tau\right)  \sin x
\]
satisfies $\mathcal{L}u=0$; $u$ is smooth w.r.t. $x$ and only Lipschitz
continuous w.r.t. $t$. Let%
\[
U\left(  t,x\right)  =t^{\alpha}u\left(  t,x\right)  \text{ for some }%
\alpha\in\left(  \frac{1}{2},1\right)  .
\]
Then $U$ solves the problem%
\[
\left\{
\begin{array}
[c]{l}%
\mathcal{L}U=F\text{ for }x\in\mathbb{R},t>0\\
U\left(  0,x\right)  =0
\end{array}
\right.
\]
with $F\left(  t,x\right)  =-\alpha t^{\alpha-1}u\left(  t,x\right)  $, so
that, as soon as $\alpha>\frac{1}{2},$%
\[
F\in L^{2}\left(  \left(  0,1\right)  \times\mathbb{R}\right)  \text{.}%
\]
Moreover,%
\[
U_{t}\left(  t,x\right)  =\alpha t^{\alpha-1}u\left(  t,x\right)  -t^{\alpha
}a\left(  t\right)  u\left(  t,x\right)  \in L^{2}\left(  \left(  0,T\right)
,C^{\infty}\left(  \mathbb{R}\right)  \right)
\]
Hence%
\[
U\in W^{1,2}\left(  \left(  0,T\right)  ,C^{\infty}\left(  \mathbb{R}\right)
\right)  \cap C^{0,\alpha}\left(  \left[  0,T\right]  ,C^{\infty}\left(
\mathbb{R}\right)  \right)  .
\]
Since $\alpha>\frac{1}{2}$ can be chosen as close to $1/2$ as we want, this
shows that the regularity with respect to $t$ expressed by Theorem
\ref{Thm subell plus} cannot be improved. Also, note that the H\"{o}lder
continuity w.r.t. $t$ cannot be improved to Lipschitz continuity just remaing
far off $t=0$: if we multiply the above $U\left(  t,x\right)  $ for
$\left\vert t-t_{0}\right\vert ^{\alpha}$ we get a similar example exhibiting
a $\alpha$-H\"{o}lder continuity w.r.t. $t$ near $t=t_{0}$.
\end{example}

\section{Regularization of distributional solutions\label{Sec Hypoellipticity}%
}

In this section we want to extend our smoothness result, established in
Theorem \ref{Thm subell plus} (iii) for functions in $W^{1,2}\left(  \left(
0,T\right)  ,L_{loc}^{2}\left(  \mathbb{G}\right)  \right)  $, to more general
distributions. First of all, we need to make precise the distributional
notions that we will use.

\begin{definition}
Let $\Omega\subseteq\mathbb{G}$ be an open set. We will say that $u\in
L^{2}\left(  \left(  0,T\right)  ,\mathcal{D}^{\prime}\left(  \Omega\right)
\right)  $ if $u\in\mathcal{D}^{\prime}\left(  \Omega_{T}\right)  $ and for
every $\phi\in\mathcal{D}\left(  \Omega\right)  $ there exists a function
$h_{\phi}\in L^{2}\left(  0,T\right)  $ such that for every $\psi
\in\mathcal{D}\left(  0,T\right)  ,$%
\[
\left\langle u,\phi\otimes\psi\right\rangle =\int_{0}^{T}h_{\phi}\left(
t\right)  \psi\left(  t\right)  dt.
\]
In this case we will write, more transparently, $h_{\phi}\left(  t\right)
=\left\langle u\left(  t,\cdot\right)  ,\phi\right\rangle $ and%
\[
\left\langle u,\phi\left(  x\right)  \psi\left(  t\right)  \right\rangle
=\int_{0}^{T}\left\langle u\left(  t,\cdot\right)  ,\phi\right\rangle
\psi\left(  t\right)  dt
\]
for every $\phi\in\mathcal{D}\left(  \Omega\right)  \ $and $\psi\in
\mathcal{D}\left(  0,T\right)  $ (and therefore also for every $\psi\in
L^{2}\left(  0,T\right)  $)$.$

Analogously, we will say that $u\in W^{1,2}\left(  \left(  0,T\right)
,\mathcal{D}^{\prime}\left(  \Omega\right)  \right)  $ if $u\in\mathcal{D}%
^{\prime}\left(  \Omega_{T}\right)  $ with both $u$ and its distributional
derivative $\partial_{t}u$ belonging to\ $L^{2}\left(  \left(  0,T\right)
,\mathcal{D}^{\prime}\left(  \Omega\right)  \right)  $.

We will say that $u$ is a distributional solution to $\mathcal{L}u=F$ in
$\Omega_{T}$, with $F\in L^{2}\left(  \left(  0,T\right)  ,\mathcal{D}%
^{\prime}\left(  \Omega\right)  \right)  $ if $u\in W^{1,2}\left(  \left(
0,T\right)  ,\mathcal{D}^{\prime}\left(  \Omega\right)  \right)  $ and:%
\[
\left\langle -\partial_{t}u\left(  t,\cdot\right)  ,\phi\right\rangle
+\sum_{i,j=1}^{q}a_{ij}\left(  t\right)  \left\langle X_{i}X_{j}u\left(
t,\cdot\right)  ,\phi\right\rangle =\left\langle F\left(  t,\cdot\right)
,\phi\right\rangle
\]
for every $\phi\in\mathcal{D}\left(  \Omega\right)  $ and a.e. $t\in\left(
0,T\right)  $, or equivalently:%
\begin{align*}
&  \int_{0}^{T}\left\{  \left\langle -\partial_{t}u\left(  t,\cdot\right)
,\phi\right\rangle +\sum_{i,j=1}^{q}a_{ij}\left(  t\right)  \left\langle
u\left(  t,\cdot\right)  ,X_{i}X_{j}\phi\right\rangle \right\}  \psi\left(
t\right)  dt\\
&  =\int_{0}^{T}\left\langle F\left(  t,\cdot\right)  ,\phi\right\rangle
\psi\left(  t\right)  dt
\end{align*}
$\forall\phi\in\mathcal{D}\left(  \Omega\right)  ,\psi\in L^{2}\left(
0,T\right)  $.
\end{definition}

The proof of a regularity result for distributional solutions usually begins
identifying the given distribution, locally, with some derivative of a
continuous function, in view of the classical result about the local structure
of distributions. For distributions in the class $L^{2}\left(  \left(
0,T\right)  ,\mathcal{D}^{\prime}\left(  \Omega\right)  \right)  $ we could
not find in the literature any reference for a similar result. So we will
explicitly assume that our distribution could be seen, on a fixed domain
compactly contained in $\Omega$, as a space derivative of a suitable function:

\begin{definition}
\label{Def x-finite}Let $u\in L^{2}\left(  \left(  0,T\right)  ,\mathcal{D}%
^{\prime}\left(  \Omega\right)  \right)  $ for some open set $\Omega
\subseteq\mathbb{G}$. We will say that $u$ satisfies the $x$-finite order
assumption on $\Omega$ if:

there exists a function $h\in L^{2}\left(  \left(  0,T\right)  ,L_{loc}%
^{1}\left(  \Omega\right)  \right)  $ and a multiindex $\alpha$ such that%
\begin{equation}
u=\frac{\partial^{\alpha}h}{\partial x^{\alpha}}\text{ in }\mathcal{D}%
^{\prime}\left(  \Omega_{T}\right)  \label{x-finite}%
\end{equation}
that is%
\[
\left\langle u,\phi\left(  x\right)  \psi\left(  t\right)  \right\rangle
=\int_{0}^{T}\left(  \left(  -1\right)  ^{\left\vert \alpha\right\vert }%
\int_{\Omega^{\prime}}h\left(  t,x\right)  \frac{\partial^{\alpha}\phi
}{\partial x^{\alpha}}\left(  x\right)  dx\right)  \psi\left(  t\right)  dt
\]
$\forall\phi\in\mathcal{D}\left(  \Omega\right)  ,\psi\in L^{2}\left(
0,T\right)  $.

If $u\in W^{1,2}\left(  \left(  0,T\right)  ,\mathcal{D}^{\prime}\left(
\Omega\right)  \right)  $, we will say that $u$ satisfies the $x$-finite order
assumption on $\Omega$ if (\ref{x-finite}) holds with $h\in W^{1,2}\left(
\left(  0,T\right)  ,L_{loc}^{1}\left(  \Omega\right)  \right)  $.
\end{definition}

Note that if $u\in W^{1,2}\left(  \left(  0,T\right)  ,\mathcal{D}^{\prime
}\left(  \Omega\right)  \right)  $ satisfies the $x$-finite order assumption
on $\Omega^{\prime}$, then $h\in C^{0}\left(  \left[  0,T\right]  ,L_{loc}%
^{1}\left(  \Omega\right)  \right)  $. In particular, saying that $u\left(
0,\cdot\right)  =0$ means that $h\left(  0,\cdot\right)  =0$ a.e. in $\Omega$.

The aim of this section is to prove that:

\begin{theorem}
\label{Thm hypo}For some bounded domain $\Omega\subset\mathbb{G}$, let $u$ be
a distributional solution to $\mathcal{L}u=F$ in $\Omega_{T}$ with $F\in
L^{2}\left(  \left(  0,T\right)  ,\mathcal{D}^{\prime}\left(  \Omega\right)
\right)  $. Assume that $u$ satisfies the $x$-finite order assumption (see
Definition \ref{Def x-finite}) and $u\left(  0,\cdot\right)  =0$ in $\Omega$.
Then, for every domain $\Omega^{\prime}\Subset\Omega$, if%
\[
F\in L^{2}\left(  \left(  0,T\right)  ,C^{\infty}\left(  \overline{\Omega
}\right)  \right)
\]
then%
\[
u\in C^{0}\left(  \left[  0,T\right]  ,C^{\infty}\left(  \overline
{\Omega^{\prime}}\right)  \right)  \text{ and }u_{t}\in L^{2}\left(  \left(
0,T\right)  ,C^{\infty}\left(  \overline{\Omega^{\prime}}\right)  \right)  .
\]

\end{theorem}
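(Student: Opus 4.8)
The plan is to reduce Theorem~\ref{Thm hypo} to the already established function case, Theorem~\ref{Thm subell plus}(iii), by showing that on $\Omega'_T$ the distribution $u$ agrees with a function lying in $L^2((0,T),W_{X^R,loc}^{k,2}(\mathbb{G}))$ for every $k$. The natural device is the left mollification $u_\varepsilon=\phi_\varepsilon\ast u$ already used in Proposition~\ref{Ch4-Prop Local L2}. For a distribution satisfying the $x$-finite order assumption, $u=\partial_x^\alpha h$ with $h\in W^{1,2}((0,T),L^1_{loc}(\Omega))$, the convolution $u_\varepsilon(t,x)=(-1)^{|\alpha|}\int_{\mathbb{G}} h(t,z)\,\partial_z^\alpha[\phi_\varepsilon(x\circ z^{-1})]\,dz$ is smooth in $x$, belongs to $W^{1,2}((0,T),L^2_{loc})$ on any $\Omega''\Subset\Omega$ once $\varepsilon$ is small, and satisfies $u_\varepsilon(0,\cdot)=0$ because $h(0,\cdot)=0$.

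First I would verify that $\mathcal{L}u_\varepsilon=F_\varepsilon:=\phi_\varepsilon\ast F$ a.e., repeating verbatim the computation in Proposition~\ref{Ch4-Prop Local L2} but reading every spatial integral as a distributional pairing: testing $\mathcal{L}u_\varepsilon$ against $\varphi\in C_0^\infty$ and transferring $\mathcal{A}=\sum a_{ij}X_iX_j$ and $\partial_t$ onto the translated test function $\psi_y(z)=\varphi(y\circ z)$, the distributional definition of solution with test function $\psi_y$ (which is supported in $\Omega$ for $\|y\|\leqslant\varepsilon$ small) gives the required identity, and collapsing the $y$-integral produces $F_\varepsilon$. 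This step uses only that $\mathcal{L}$ is built from the \emph{left} invariant fields $X_i$ and $\partial_t$, so that it commutes with left convolution by Proposition~\ref{Prop conv distrib}(i). Since $F\in L^2((0,T),C^\infty(\overline\Omega))$, the data $\zeta_1 F_\varepsilon$ stay bounded, uniformly in $\varepsilon$, in every $L^2((0,T),W_{X^R}^{m,2})$, by the reasoning that leads to \eqref{conclusion}.

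Next I would apply the a priori estimate \eqref{regularity step k} of Theorem~\ref{Thm subsubelliptic} to each $u_\varepsilon$. The difficulty, and the heart of the proof, is that the lower-order term $\|\zeta_1 u_\varepsilon\|_{L^2(\mathbb{G}_T)}$ is \emph{not} uniformly bounded, since $u$ is genuinely a distribution and $\|u_\varepsilon\|_{L^2}$ can blow up like $\varepsilon^{-|\alpha|}$. This is exactly where the $x$-finite order assumption must be used: from $u=\partial_x^\alpha h$ and $\langle u_\varepsilon,\varphi\rangle=\pm\int h\,\partial^\alpha(\widetilde\phi_\varepsilon\ast\varphi)$ one obtains $|\langle\zeta_1 u_\varepsilon(t,\cdot),\varphi\rangle|\leqslant C\|h(t,\cdot)\|_{L^1}\|\varphi\|_{C^{|\alpha|}}$ uniformly in $\varepsilon$, so that $\zeta_1 u_\varepsilon$ is bounded, uniformly in $\varepsilon$, in a fixed negative-order space $W^{-M,2}_{loc}$ with $M>|\alpha|+N/2$. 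I would then upgrade \eqref{regularity step k} to a version in which the lower-order term is measured in this negative norm, by inserting the interpolation inequality $\|\zeta_1 v\|_{L^2}\leqslant\delta\|\zeta_1 v\|_{W_{X^R}^{k,2}}+C_\delta\|\zeta_1 v\|_{W^{-M,2}}$ (valid on a fixed compact set, transferring through the Euclidean scale via Proposition~\ref{Prop reg sobolev}) and absorbing the $\delta$-term along a finite chain of nested cutoffs $\zeta\prec\zeta_1\prec\cdots$. The outcome is a bound on $\|\zeta u_\varepsilon\|_{L^2((0,T),W_{X^R}^{k,2})}$ uniform in $\varepsilon$.

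Finally I would pass to the limit by weak compactness: a subsequence of $\zeta u_\varepsilon$ converges weakly in $L^2((0,T),W_{X^R}^{k,2})$ to a limit which, since $u_\varepsilon\to u$ in $\mathcal{D}'(\Omega_T)$, must coincide with $\zeta u$; hence $\zeta u\in L^2((0,T),W_{X^R}^{k,2})$ for every $k$. Feeding this into the equation $u_t=\sum a_{ij}(t)X_iX_j u-F$ transfers the spatial regularity to $u_t$, giving $u\in W^{1,2}((0,T),W_{X^R,loc}^{k,2})$; then the argument of Theorem~\ref{Thm subell plus}(iii), combined with Proposition~\ref{Prop reg sobolev} to convert $W_{X^R}$-regularity into ordinary smoothness, yields $u\in C^0([0,T],C^\infty(\overline{\Omega'}))$ and $u_t\in L^2((0,T),C^\infty(\overline{\Omega'}))$. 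I expect the uniform control of the lower-order term to be the only real obstacle; everything else is a transcription, into the distributional and mollified setting, of the estimates already proved for functions.
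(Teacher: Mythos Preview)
Your mollification strategy runs into a genuine obstacle at the absorption step. Write the chain you propose: from \eqref{regularity step k},
\[
\|\zeta_{j-1} u_\varepsilon\|_{L^2((0,T),W_{X^R}^{k,2})} \leqslant c_j\bigl(\|\zeta_j F_\varepsilon\|_{L^2((0,T),W_{X^R}^{k+s-1,2})} + \|\zeta_j u_\varepsilon\|_{L^2(\mathbb{G}_T)}\bigr),
\]
and interpolate $\|\zeta_j u_\varepsilon\|_{L^2} \leqslant \delta\|\zeta_j u_\varepsilon\|_{W_{X^R}^{k,2}} + C_\delta\|\zeta_j u_\varepsilon\|_{W^{-M,2}}$. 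Unrolling over $j=1,\dots,N$ leaves a tail term proportional to $(C\delta)^N\|\zeta_N u_\varepsilon\|_{W_{X^R}^{k,2}}$, and this last norm is \emph{not} uniformly bounded as $\varepsilon\to 0$: it blows up polynomially in $\varepsilon^{-1}$, being governed by derivatives of $\phi_\varepsilon$ acting on $h$. Choosing $\delta$ small for fixed $N$ does not kill a quantity that diverges with $\varepsilon$; choosing $\delta=\delta(\varepsilon)$ makes $C_\delta$ explode; and increasing $N$ forces the cutoffs closer together so that the constants $c_j$ from \eqref{regularity step k} blow up. The loop does not close with an Ehrling-type inequality, because the term to be absorbed always sits on a strictly larger cutoff than the one on the left. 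This is precisely the difficulty that separates the distributional case from the function case, and your sketch does not overcome it.

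The paper takes a different route that sidesteps the absorption problem entirely. Instead of mollifying, it convolves $u$ on the left with a compactly supported parametrix $\widetilde\gamma$ of the \emph{elliptic} right-invariant operator $\mathcal{L}^R=\sum_{j=1}^N(X_j^R)^2$, setting $T_Vu=\widetilde\gamma\ast u$. This $T_V$ still commutes with $\mathcal{L}$ (Proposition~\ref{ScambioLT}), so $\mathcal{L}(T_V^Ku)=T_V^KF$; and by Corollary~\ref{Corollary TKf} a finite power $T_V^K$ carries the finite-order distribution $u$ into $W^{1,2}((0,T),W_X^{1,2})$, a genuine function to which Theorem~\ref{Thm subell plus} applies directly, yielding $T_V^Ku$ smooth in $x$. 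The ingredient your scheme lacks is the inversion step, Lemma~\ref{tfCinf fCinf}: the parametrix identity $\mathcal{L}^R T_Vu=-u+\omega\ast u$ lets one read regularity of $u$ off regularity of $T_Vu$, so the $T_V$'s are peeled away one at a time until $u$ itself lies in $W^{1,2}((0,T),W_X^{1,2})$, after which Theorem~\ref{Thm subell plus} finishes the job. It is this invertibility of $T_V$ modulo smoothing, not any interpolation, that replaces the failed absorption.
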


In order to prove Theorem \ref{Thm hypo} we will adapt the technique used in
\cite[\S 4]{BB1} for sublaplacians.

Let us consider the second order differential operator
\[
\mathcal{L}^{R}=\sum_{j=1}^{N}\left(  X_{j}^{R}\right)  ^{2}%
\]
built using the whole canonical base of right invariant vector fields. This is
a right-invariant (but no longer homogeneous) uniformly elliptic operator in
$\mathbb{G}$, which at the origin coincides with the standard Laplacian. The
fundamental solution of the Laplacian can be proved to be a parametrix for
$\mathcal{L}^{R}$:

\begin{proposition}
[{see \cite[Prop. 4.2.]{BB1}}]\label{Lemma-Parametrix}Let $V\subset\mathbb{G}$
be a neighborhood of the origin. There exist $\widetilde{\gamma}\in C^{\infty
}\left(  \mathbb{G}\setminus\left\{  0\right\}  \right)  $ and $\omega\in
C^{\infty}\left(  \mathbb{G}\setminus\left\{  0\right\}  \right)  $, both
supported in $V$, satisfying%
\begin{align}
\left\vert \widetilde{\gamma}\left(  x\right)  \right\vert  &  \leqslant
\frac{c}{\left\vert x\right\vert ^{N-2}}\label{Prop gamma tilde}\\
\left\vert \partial_{x_{i}}\widetilde{\gamma}\left(  x\right)  \right\vert  &
\leqslant\frac{c}{\left\vert x\right\vert ^{N-1}}\text{ \ \ }%
i=1,2,...,N\label{Prop gamma tilde der}\\
\left\vert \omega\left(  x\right)  \right\vert  &  \leqslant\frac
{c}{\left\vert x\right\vert ^{N-2}}\nonumber
\end{align}
and such that in the sense of distributions%
\[
\mathcal{L}^{R}\widetilde{\gamma}=-\delta+\omega.
\]
(Here $\delta$ is the Dirac mass as a distribution in $\mathbb{R}^{N}$).
\end{proposition}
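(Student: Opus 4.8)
The plan is to take for $\widetilde{\gamma}$ a cut-off of the fundamental solution of the ordinary Laplacian and to identify $\omega$ with the (less singular) error committed in replacing $\Delta$ by $\mathcal{L}^{R}$. Write each right invariant field as $X_{j}^{R}=\partial_{x_{j}}+\sum_{k}\beta_{jk}(x)\partial_{x_{k}}$, where each $\beta_{jk}$ is a polynomial vanishing at the origin and, by the Baker--Campbell--Hausdorff formula, its linear part is $\tfrac{1}{2}\sum_{i}c_{ji}^{k}x_{i}$, the $c_{ji}^{k}$ being the structure constants defined by $[X_{j},X_{i}]=\sum_{k}c_{ji}^{k}X_{k}$. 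Since the $X_{j}^{R}$ agree with $\partial_{x_{j}}$ at the origin, $\mathcal{L}^{R}=\sum_{j=1}^{N}(X_{j}^{R})^{2}$ is uniformly elliptic with smooth (polynomial) coefficients and its second order part at the origin is exactly $\Delta=\sum_{j=1}^{N}\partial_{x_{j}}^{2}$. Let $\Gamma$ be the fundamental solution of $\Delta$ in $\mathbb{R}^{N}$ (recall $N\geq 3$), so that $\Delta\Gamma=\delta$, $|\Gamma(x)|\leq c|x|^{2-N}$ and $|\partial_{x_{i}}\Gamma(x)|\leq c|x|^{1-N}$; fix $\psi\in C_{0}^{\infty}(V)$ with $\psi\equiv 1$ near the origin and set $\widetilde{\gamma}=-\psi\Gamma$. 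Then $\widetilde{\gamma}\in C^{\infty}(\mathbb{G}\setminus\{0\})$ is supported in $V$ and (\ref{Prop gamma tilde}), (\ref{Prop gamma tilde der}) are immediate from the corresponding bounds on $\Gamma$.

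First I would compute $\mathcal{L}^{R}\widetilde{\gamma}$ in the sense of distributions, testing against $\varphi\in C_{0}^{\infty}(\mathbb{G})$ and integrating by parts on $\{|x|>\varepsilon\}$ before letting $\varepsilon\to0$. Splitting $\mathcal{L}^{R}=\Delta+\mathcal{R}$ with $\mathcal{R}=\mathcal{L}^{R}-\Delta$, the Laplacian part yields the usual $-\delta$ (because $\widetilde{\gamma}=-\Gamma$ near the origin) together with a smooth term supported where $\nabla\psi\neq0$. In the remainder, each boundary integral over $\{|x|=\varepsilon\}$ carries a coefficient of $\mathcal{R}$ evaluated near the origin, hence a factor $O(\varepsilon)$; combined with $|\widetilde{\gamma}|\sim\varepsilon^{2-N}$, $|\nabla\widetilde{\gamma}|\sim\varepsilon^{1-N}$ and the surface measure $\sim\varepsilon^{N-1}$, these terms are $O(\varepsilon)\to0$. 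Thus $\mathcal{R}\widetilde{\gamma}$ carries no Dirac mass, and setting $\omega:=\mathcal{L}^{R}\widetilde{\gamma}+\delta$ we obtain $\mathcal{L}^{R}\widetilde{\gamma}=-\delta+\omega$ with $\omega$ an honest function, smooth on $\mathbb{G}\setminus\{0\}$ and supported in $V$; only the size of its singularity at the origin remains to be controlled.

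This estimate is the heart of the matter, and the step I expect to be the main obstacle. Naively, the second order part of $\mathcal{R}$ carries coefficients vanishing only to first order, which paired with $\partial^{2}\Gamma=O(|x|^{-N})$ would produce $O(|x|^{1-N})$, too singular for (\ref{Prop gamma tilde}). The point is that the first order contribution annihilates the radial function $\Gamma$. Collecting the linear parts of the coefficients, the order one principal part of $\mathcal{R}$ is
\[
P_{1}=\sum_{i,j,k}c_{ji}^{k}\,x_{i}\,\partial_{x_{j}}\partial_{x_{k}}.
\]
Writing $\partial_{x_{j}}\partial_{x_{k}}\Gamma=A(|x|)x_{j}x_{k}+B(|x|)\delta_{jk}$ for suitable $A,B$, one finds $P_{1}\Gamma=A\sum_{i,j,k}c_{ji}^{k}x_{i}x_{j}x_{k}+B\sum_{i,j}c_{ji}^{j}x_{i}$. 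Both sums vanish: for each fixed $k$ the antisymmetry $c_{ji}^{k}=-c_{ij}^{k}$ kills the symmetric pairing $\sum_{i,j}c_{ji}^{k}x_{i}x_{j}$, while $c_{ji}^{j}=0$ because $[X_{j},X_{i}]$ lands in a strictly higher stratum and hence has no $X_{j}$-component. Therefore $P_{1}\Gamma=0$.

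Once $P_{1}$ is removed, every surviving coefficient of $\mathcal{R}$ vanishes at the origin: the remaining second order coefficients are $O(|x|^{2})$ and all first order coefficients are $O(|x|)$ (the potentially constant terms $\partial_{x_{j}}\beta_{jk}$ reduce to $\tfrac{1}{2}c_{jj}^{k}=0$). Consequently $\mathcal{R}\widetilde{\gamma}=\mathcal{R}(-\psi\Gamma)$ is $O(|x|^{2})\cdot O(|x|^{-N})+O(|x|)\cdot O(|x|^{1-N})=O(|x|^{2-N})$ near the origin, while away from the origin $\omega$ is smooth and compactly supported, hence bounded. This gives $|\omega(x)|\leq c|x|^{2-N}$ and completes the argument; all the remaining verifications are routine bookkeeping around the localization $\psi$.
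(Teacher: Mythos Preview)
The paper does not give its own proof of this proposition; it is quoted verbatim from \cite[Prop.~4.2]{BB1}, and the only indication of method is the sentence preceding the statement: ``The fundamental solution of the Laplacian can be proved to be a parametrix for $\mathcal{L}^{R}$.'' Your construction---taking $\widetilde{\gamma}$ to be a cut-off of the Newtonian potential and identifying $\omega$ with the pointwise value of $\mathcal{L}^{R}\widetilde{\gamma}$ away from the origin---is precisely this approach, and your argument is correct.

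The part you flag as the main obstacle is indeed the crux, and your resolution is sound. The linear-in-$x$ second-order part of $\mathcal{R}=\mathcal{L}^{R}-\Delta$ is exactly $P_{1}=\sum_{i,j,k}c_{ji}^{k}x_{i}\partial_{x_{j}}\partial_{x_{k}}$, and your two vanishing arguments are valid: $\sum_{i,j}c_{ji}^{k}x_{i}x_{j}=0$ by antisymmetry of the bracket, and $c_{ji}^{j}=0$ because in a stratified algebra $[X_{j},X_{i}]$ has homogeneity $\alpha_{j}+\alpha_{i}>\alpha_{j}$ and hence no $X_{j}$-component. Likewise, the constant part of the first-order coefficients of $\mathcal{R}$ is $\sum_{j}\tfrac{1}{2}c_{jj}^{k}=0$ since $[X_{j},X_{j}]=0$. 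With $P_{1}\Gamma=0$ and all remaining coefficients of $\mathcal{R}$ of order $O(|x|^{2})$ (second order) or $O(|x|)$ (first order), the bound $|\omega(x)|\leqslant c|x|^{2-N}$ follows. Your treatment of the boundary terms in the distributional computation is also correct: every surface integral on $\{|x|=\varepsilon\}$ coming from $\mathcal{R}$ carries an extra factor $O(\varepsilon)$ from the vanishing coefficients, which is enough to kill it. Nothing is missing.
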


Let us now consider three open sets in $\mathbb{G}$, $\Omega^{\prime}%
\Subset\Omega^{\prime\prime}\Subset\Omega$ and let $V$ be a neighborhood of
the origin such that $V^{-1}\circ\Omega^{\prime}\subset\Omega^{\prime\prime}$.
Define $\widetilde{\gamma}$ as in Proposition \ref{Lemma-Parametrix}, with
$\widetilde{\gamma}$ supported in $V$. The convolution with $\widetilde{\gamma
}$ defines a regularizing operator that acts on functions $u\in L_{loc}%
^{1}\left(  \mathbb{G}_{T}\right)  $ as follows. For every $x\in\Omega
^{\prime}$ and $t\in\left[  0,T\right]  $ we set%
\begin{equation}
T_{V}u\left(  t,x\right)  =\left(  \widetilde{\gamma}\ast u\left(
t,\cdot\right)  \right)  \left(  x\right)  =\int_{\mathbb{G}}\widetilde{\gamma
}\left(  x\circ y^{-1}\right)  u\left(  t,y\right)  dy.\label{T-funct}%
\end{equation}
The subscript $V$ in $T_{V}$ recalls that the definition of the operator
depends on the choice of the neighborhood $V$ used to define
$\widetilde{\gamma}$.

Note that%
\[
T_{V}:L^{2}\left(  \left(  0,T\right)  ,L^{1}\left(  \Omega^{\prime\prime
}\right)  \right)  \longrightarrow L^{2}\left(  \left(  0,T\right)
,L^{1}\left(  \Omega^{\prime}\right)  \right)  .
\]

Namely, for $x\in\Omega^{\prime}$ and $x\circ y^{-1}\in\operatorname{sprt}%
\widetilde{\gamma}$, the point $y=\left(  x\circ y^{-1}\right)  ^{-1}\circ x$
ranges in $V^{-1}\circ\Omega^{\prime}\subset\Omega^{\prime\prime}$, hence
introducing characteristic functions,%
\[
\chi_{\Omega^{\prime}}\left(  x\right)  T_{V}u\left(  t,x\right)
=\int_{\mathbb{G}}\left(  \widetilde{\gamma}\chi_{V}\right)  \left(  x\circ
y^{-1}\right)  u\left(  t,y\right)  \chi_{\Omega^{\prime\prime}}\left(
y\right)  dy,
\]
or%
\begin{equation}
\chi_{\Omega^{\prime}}T_{V}u\left(  t,\cdot\right)  =\widetilde{\gamma}%
\chi_{V}\ast u\left(  t,\cdot\right)  \chi_{\Omega^{\prime\prime}}
\label{sprt T}%
\end{equation}
which by Young's inequality gives, at least for a.e. $t$,%
\[
\left\Vert T_{V}u\left(  t,\cdot\right)  \right\Vert _{L^{1}\left(
\Omega^{\prime}\right)  }\leqslant\left\Vert \widetilde{\gamma}\right\Vert
_{L^{1}\left(  V\right)  }\left\Vert u\left(  t,\cdot\right)  \right\Vert
_{L^{1}\left(  \Omega^{\prime\prime}\right)  }%
\]
and hence%
\[
\left\Vert T_{V}u\right\Vert _{L^{2}\left(  \left(  0,T\right)  ,L^{1}\left(
\Omega^{\prime}\right)  \right)  }\leqslant\left\Vert \widetilde{\gamma
}\right\Vert _{L^{1}\left(  V\right)  }\left\Vert u\right\Vert _{L^{2}\left(
\left(  0,T\right)  ,L^{1}\left(  \Omega^{\prime\prime}\right)  \right)  }.
\]

Also, $T_{V}$ acts on distributions $u\in L^{2}\left(  \left(  0,T\right)
,\mathcal{D}^{\prime}\left(  \Omega\right)  \right)  $ as follows. For every
$\varphi\in\mathcal{D}\left(  \Omega^{\prime}\right)  $ we set%
\begin{equation}
\left\langle T_{V}u\left(  t,\cdot\right)  ,\varphi\right\rangle =\left\langle
u\left(  t,\cdot\right)  ,T_{V}^{\ast}\varphi\right\rangle \label{T-distr}%
\end{equation}
where%
\[
T_{V}^{\ast}\varphi\left(  y\right)  =\int_{\mathbb{G}}\widetilde{\gamma
}\left(  x\circ y^{-1}\right)  \varphi\left(  x\right)  dx.
\]

Observe that the assumption on $V$ implies that $T_{V}^{\ast}\varphi$ is a
test function in $\Omega^{\prime}$. Namely, for $x\in\operatorname{sprt}%
\varphi$ and $x\circ y^{-1}\in\operatorname{sprt}\widetilde{\gamma}$ the point
$y$ ranges in $\Omega^{\prime\prime}\Subset\Omega$. The function $T_{V}^{\ast
}\varphi$ is smooth, as one can see writing
\[
T_{V}^{\ast}\varphi\left(  y\right)  =\int_{\mathbb{G}}\widetilde{\gamma
}\left(  z\right)  \varphi\left(  z\circ y\right)  dx
\]
and computing left invariant derivatives
\[
X_{I}\left(  T_{V}^{\ast}\varphi\right)  \left(  y\right)  =\int_{\mathbb{G}%
}\widetilde{\gamma}\left(  z\right)  \left(  X_{I}\varphi\right)  \left(
z\circ y\right)  dx.
\]
Therefore the pairing (\ref{T-distr}) is well defined. Also, from the previous
identity we easily read that if $\varphi_{k}\rightarrow0$ in $\mathcal{D}%
\left(  \Omega\right)  $ then $T_{V}^{\ast}\varphi_{k}\rightarrow0$ in
$\mathcal{D}\left(  \Omega^{\prime}\right)  $. Hence $T_{V}u\left(
t,\cdot\right)  \in$ $\mathcal{D}^{\prime}\left(  \Omega^{\prime}\right)  $.
Moreover%
\[
\int_{0}^{T}\left\vert \left\langle T_{V}u\left(  t,\cdot\right)
,\varphi\right\rangle \right\vert ^{2}dt=\int_{0}^{T}\left\vert \left\langle
u\left(  t,\cdot\right)  ,T_{V}^{\ast}\varphi\right\rangle \right\vert
^{2}dt<\infty
\]
(just by definition of $L^{2}\left(  \left(  0,T\right)  ,\mathcal{D}^{\prime
}\left(  \Omega\right)  \right)  $), so that
\[
T_{V}:L^{2}\left(  \left(  0,T\right)  ,\mathcal{D}^{\prime}\left(
\Omega\right)  \right)  \longrightarrow L^{2}\left(  \left(  0,T\right)
,\mathcal{D}^{\prime}\left(  \Omega^{\prime}\right)  \right)  .
\]

Next, we need to prove the regularizing properties of $T_{V}$. The following
result is an adaptation of \cite[Prop. 4.4.]{BB1}.

\begin{proposition}
[Regularizing properties of $T_{V}$]\label{Prop_regularizing T}Let
$\Omega^{\prime}\Subset\Omega^{\prime\prime}\Subset\Omega$. There exists a
neighborhood $V$ of the origin such that the operator $T_{V}$ defined in
(\ref{T-distr}) has the following properties.

(1) Let $u\in\mathcal{D}^{\prime}\left(  \left(  0,T\right)  \times
\Omega\right)  $ such that $u=\frac{\partial^{\alpha}}{\partial x^{\alpha}}g$,
for some $g\in L^{2}\left(  \left(  0,T\right)  ,L_{loc}^{1}\left(
\Omega\right)  \right)  $ and multiindex $\alpha.$ Then $T_{V}u\in
L^{2}\left(  \left(  0,T\right)  ,\mathcal{D}^{\prime}\left(  \Omega^{\prime
}\right)  \right)  $ and%
\[
T_{V}u=\sum_{\left\vert \beta\right\vert \leqslant\left\vert \alpha\right\vert
-1}\partial_{x}^{\beta}A_{\beta}\text{ in }\left(  0,T\right)  \times
\Omega^{\prime}%
\]
for suitable $A_{\beta}\in L^{2}\left(  \left(  0,T\right)  ,L_{loc}%
^{1}\left(  \Omega^{\prime}\right)  \right)  $.

(2) Let $u\in L^{2}\left(  \left(  0,T\right)  ,L_{loc}^{p}\left(
\Omega\right)  \right)  $ for some $1\leqslant p<\frac{N}{2}$, then
\[
T_{V}u\in L^{2}\left(  \left(  0,T\right)  ,L^{p^{\prime}}\left(
\Omega^{\prime}\right)  \right)  \text{ for }\frac{1}{p^{\prime}}>\frac{1}%
{p}-\frac{2}{N}%
\]
and%
\[
\left\Vert T_{V}u\right\Vert _{L^{2}\left(  \left(  0,T\right)  ,L^{p^{\prime
}}\left(  \Omega^{\prime}\right)  \right)  }\leqslant c\left\Vert u\right\Vert
_{L^{2}\left(  \left(  0,T\right)  ,L_{loc}^{p}\left(  \Omega\right)  \right)
}.
\]

(3) Let $u\in L^{2}\left(  \left(  0,T\right)  ,L_{loc}^{2}\left(
\Omega\right)  \right)  $ then $T_{V}u\in L^{2}\left(  \left(  0,T\right)
,W_{X}^{1,2}\left(  \Omega^{\prime}\right)  \right)  $.

(4) Let $u\in L^{2}\left(  \left(  0,T\right)  ,C^{\infty}\left(
\overline{\Omega}\right)  \right)  $, then $T_{V}u\in L^{2}\left(  \left(
0,T\right)  ,C^{\infty}\left(  \overline{\Omega^{\prime}}\right)  \right)  .$
\end{proposition}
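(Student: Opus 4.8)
The plan is to treat the four statements separately, each resting on the pointwise bounds of Proposition \ref{Lemma-Parametrix} for $\widetilde{\gamma},\partial_{x_i}\widetilde{\gamma},\omega$, on the parametrix identity $\mathcal{L}^{R}\widetilde{\gamma}=-\delta+\omega$, on the commutation rules of Proposition \ref{Prop conv distrib}, and on the fact that $\mathbb{G}$ is unimodular, so that the change of variables $y\mapsto x\circ y^{-1}$ has unit Jacobian and every group convolution estimate reduces to an ordinary Young / Hardy--Littlewood--Sobolev estimate on $\mathbb{R}^{N}$.

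I would dispatch (2) and (3) first, since they are pure size estimates. As $\widetilde{\gamma}$ is supported in $V$ and $\left\vert \widetilde{\gamma}\left(  x\right)  \right\vert \leqslant c\left\vert x\right\vert ^{2-N}$, the kernel $\widetilde{\gamma}$ belongs to $L^{r}\left(  \mathbb{G}\right)  $ for every $r<N/\left(  N-2\right)  $; using the localization (\ref{sprt T}) and Young's inequality in the form $\left\Vert T_{V}u\left(  t,\cdot\right)  \right\Vert _{L^{p^{\prime}}\left(  \Omega^{\prime}\right)  }\leqslant\left\Vert \widetilde{\gamma}\right\Vert _{L^{r}\left(  V\right)  }\left\Vert u\left(  t,\cdot\right)  \right\Vert _{L^{p}\left(  \Omega^{\prime\prime}\right)  }$ with $\tfrac{1}{p^{\prime}}=\tfrac{1}{p}+\tfrac{1}{r}-1$, and letting $r\uparrow N/\left(  N-2\right)  $, one covers the whole range $\tfrac{1}{p^{\prime}}>\tfrac{1}{p}-\tfrac{2}{N}$; taking the $L^{2}$-norm in $t$ yields (2). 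For (3) I would differentiate under the integral sign, writing $X_{i}=\sum_{j}b_{ij}\left(  x\right)  \partial_{x_{j}}$ with polynomial $b_{ij}$; each $\partial_{x_{j}}$ falls on $\widetilde{\gamma}\left(  x\circ y^{-1}\right)  $ and, by (\ref{Prop gamma tilde der}), produces a kernel dominated by $c\left\vert x\circ y^{-1}\right\vert ^{1-N}$, which is locally integrable. Young's inequality with an $L^{1}$ kernel against an $L^{2}$ function then gives $X_{i}T_{V}u\left(  t,\cdot\right)  \in L^{2}\left(  \Omega^{\prime}\right)  $, i.e. $T_{V}u\left(  t,\cdot\right)  \in W_{X}^{1,2}\left(  \Omega^{\prime}\right)  $, and integration in $t$ finishes.

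For (4) I would invoke the parametrix identity rather than a direct estimate. If $u\in L^{2}\left(  \left(  0,T\right)  ,C^{\infty}\left(  \overline{\Omega}\right)  \right)  $, then $\omega\ast u$ is smooth in $x$ (differentiate under the integral, $\omega\in L_{loc}^{1}$), so by Proposition \ref{Prop conv distrib}(ii),
\[
\mathcal{L}^{R}\left(  T_{V}u\right)  =\left(  \mathcal{L}^{R}\widetilde{\gamma}\right)  \ast u=-u+\omega\ast u
\]
is smooth in $x$ on $\Omega^{\prime}$. Since $\mathcal{L}^{R}=\sum_{j=1}^{N}\left(  X_{j}^{R}\right)  ^{2}$ is uniformly elliptic, interior elliptic regularity upgrades $T_{V}u\left(  t,\cdot\right)  $ to $C^{\infty}\left(  \overline{\Omega^{\prime}}\right)  $; feeding the elliptic estimate $\left\Vert T_{V}u\left(  t,\cdot\right)  \right\Vert _{C^{k}\left(  \overline{\Omega^{\prime}}\right)  }\leqslant c\left(  \left\Vert \mathcal{L}^{R}T_{V}u\left(  t,\cdot\right)  \right\Vert _{C^{k^{\prime}}\left(  \overline{\Omega^{\prime\prime}}\right)  }+\left\Vert T_{V}u\left(  t,\cdot\right)  \right\Vert _{L^{2}\left(  \Omega^{\prime\prime}\right)  }\right)  $ together with the bounds from (2)--(3) gives the $L^{2}$-in-$t$ control.

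The delicate statement is (1), the order-reduction property, and this is where I expect the main difficulty. The model is the Euclidean computation $\widetilde{\gamma}\ast\partial_{x_{i}}\partial_{x}^{\alpha^{\prime}}g=\left(  \partial_{x_{i}}\widetilde{\gamma}\right)  \ast\partial_{x}^{\alpha^{\prime}}g=\partial_{x}^{\alpha^{\prime}}\left(  \left(  \partial_{x_{i}}\widetilde{\gamma}\right)  \ast g\right)  $: exactly one derivative is absorbed into the kernel, where (\ref{Prop gamma tilde der}) keeps it integrable, while the remaining $\left\vert \alpha\right\vert -1$ derivatives come out as outer derivatives acting on a convolution that is $L_{loc}^{1}$ by Young. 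On $\mathbb{G}$ the same transfer is carried out through Proposition \ref{Prop conv distrib}, converting cartesian derivatives into right invariant ones via a relation $\partial_{x_{i}}=\sum_{j}d_{ij}\left(  x\right)  X_{j}^{R}$, where the change-of-frame matrix between $\left\{  \partial_{x_{i}}\right\}  $ and $\left\{  X_{j}^{R}\right\}  $ has polynomial entries and constant determinant $1$, hence a polynomial inverse. Each such conversion, and each commutation of an outer derivative past the convolution, produces polynomial factors and terms of strictly lower order, so that by Leibniz the number of derivatives landing outside never exceeds $\left\vert \alpha\right\vert -1$; collecting them yields $T_{V}u=\sum_{\left\vert \beta\right\vert \leqslant\left\vert \alpha\right\vert -1}\partial_{x}^{\beta}A_{\beta}$. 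The hard part is purely the bookkeeping: one must check that at most one derivative ever reaches $\widetilde{\gamma}$ (a second one would give the non-integrable bound $\left\vert x\right\vert ^{-N}$), that every spurious coefficient is a polynomial bounded on the compact sets in play, and that after using unimodularity each $A_{\beta}$ genuinely belongs to $L^{2}\left(  \left(  0,T\right)  ,L_{loc}^{1}\left(  \Omega^{\prime}\right)  \right)  $, so that the distributional order does drop by one. This is the step to be carried out following \cite[Prop. 4.4]{BB1} with care.
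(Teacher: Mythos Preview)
Your arguments for (2) and (3) are correct and coincide with the paper's.

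For (4) your route works but is circuitous. The paper simply writes $T_{V}u(t,x)=\int_{\mathbb{G}}\widetilde{\gamma}(w)\,u(t,w^{-1}\circ x)\,dw$ and notes that any left invariant $\mathcal{P}$ passes onto $u$, giving $\mathcal{P}T_{V}u(t,x)=\int\widetilde{\gamma}(w)\,(\mathcal{P}u)(t,w^{-1}\circ x)\,dw$ and hence $\|T_{V}u(t,\cdot)\|_{C^{k}(\overline{\Omega'})}\leqslant c\|u(t,\cdot)\|_{C^{k}(\overline{\Omega})}$. This is precisely the computation you invoke to show $\omega\ast u$ is smooth; applying it directly to $\widetilde{\gamma}\ast u$ makes the parametrix identity and elliptic regularity superfluous.

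For (1) the mechanism you sketch does not work as stated. On $\mathbb{G}$ the Euclidean model $\widetilde{\gamma}\ast\partial_{x_{i}}h=(\partial_{x_{i}}\widetilde{\gamma})\ast h$ fails because $\partial_{x_{i}}$ is neither left nor right invariant; writing $\partial_{x_{i}}=\sum_{j}d_{ij}(x)X_{j}^{R}$ does not help, since the polynomial coefficients do not commute with convolution, and Proposition~\ref{Prop conv distrib} offers no rule for transferring a derivative from the \emph{right} factor of $\widetilde{\gamma}\ast g$ onto the kernel. The paper avoids this by working in the adjoint pairing. From
\[
\langle T_{V}u(t,\cdot),\varphi\rangle=\Big\langle\partial_{y_{i}}\partial_{y}^{\alpha'}g(t,y),\int_{\mathbb{G}}\widetilde{\gamma}(x\circ y^{-1})\varphi(x)\,dx\Big\rangle,
\]
one integration by parts in $y$ puts $\partial_{y_{i}}$ on the kernel, yielding $-\partial_{y_{i}}[\widetilde{\gamma}(x\circ y^{-1})]=\sum_{k}h_{k}(x\circ y^{-1})Z_{k}(x,y)$ with $h_{k}$ locally integrable (by (\ref{Prop gamma tilde der})) and $Z_{k}$ polynomial. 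After the substitution $x=w\circ y$ the remaining $\partial_{y}^{\alpha'}$ acts, via Leibniz, on $Z_{k}(w\circ y,y)\varphi(w\circ y)$, producing $\sum_{|\beta|\leqslant|\alpha'|}(D^{\beta}\varphi)(w\circ y)\,a_{\beta,k}(w,y)$ with polynomial $a_{\beta,k}$. Undoing the substitution one reads off $T_{V}u=\sum_{|\beta|\leqslant|\alpha|-1}\partial_{x}^{\beta}A_{\beta}$ with $A_{\beta}\in L^{2}((0,T),L^{1}_{loc}(\Omega'))$. Your conclusion---one derivative absorbed, at most $|\alpha|-1$ emerging---is right, but it is reached by integration by parts in the $y$-variable of the pairing, not through the commutation rules of Proposition~\ref{Prop conv distrib}.
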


\begin{remark}
Throughout the next proof, and also in other deductions in the following, all
the stated equalities of the kind $A\left(  t\right)  =B\left(  t\right)  $
hold for a.e. $t$. Rigorously speaking, we should write chains of equalities
of the kind
\[
\int_{0}^{T}A\left(  t\right)  \psi\left(  t\right)  dt=\int_{0}^{T}B\left(
t\right)  \psi\left(  t\right)  dt\text{ for every }\psi\in\mathcal{D}\left(
0,T\right)
\]
and then deduce that $A\left(  t\right)  =B\left(  t\right)  $ a.e.
\end{remark}

\begin{proof}
This proof is an adaptation of the proof of \cite[Prop. 4.4]{BB1}.

(1) Let $u=\ \partial_{x_{i}}\partial_{x}^{\alpha^{\prime}}g$ for some
$\alpha^{\prime}$ with $\left\vert \alpha^{\prime}\right\vert =\left\vert
\alpha\right\vert -1.$ Then, for $\varphi\in C_{0}^{\infty}\left(
\Omega^{\prime}\right)  $%
\begin{align*}
\left\langle T_{V}u\left(  t,\cdot\right)  ,\varphi\right\rangle  &
=\left\langle \partial_{y_{i}}\partial_{y}^{\alpha^{\prime}}g\left(
t,y\right)  ,\int_{\mathbb{G}}\widetilde{\gamma}\left(  x\circ y^{-1}\right)
\varphi\left(  x\right)  dx\right\rangle \\
&  =\left\langle \partial_{y}^{\alpha^{\prime}}g\left(  t,y\right)
,\int_{\mathbb{G}}-\partial_{y_{i}}\left[  \widetilde{\gamma}\left(  x\circ
y^{-1}\right)  \right]  \varphi\left(  x\right)  dx\right\rangle .
\end{align*}
We can write%
\begin{align*}
-\partial_{y_{i}}\left[  \widetilde{\gamma}\left(  x\circ y^{-1}\right)
\right]   &  =-\sum_{k=1}^{N}\left(  \partial_{k}\widetilde{\gamma}\right)
\left(  x\circ y^{-1}\right)  \partial_{y_{i}}\left[  x\circ y^{-1}\right]
_{k}\\
&  =\sum_{k=1}^{N}h_{k}\left(  x\circ y^{-1}\right)  Z_{k}\left(  x,y\right)
\end{align*}
where by (\ref{Prop gamma tilde der}) $h_{k}\left(  z\right)  $ are locally
integrable functions, smooth outside the pole, and $Z_{k}$ are polynomials
(these polynomials also depend on the index $i$ corresponding to
$\partial_{y_{i}}$, but for simplicity we suppress this unimportant index).
Hence%
\begin{align*}
\left\langle T_{V}u\left(  t,\cdot\right)  ,\varphi\right\rangle  &
=\left\langle \partial_{y}^{\alpha^{\prime}}g\left(  t,y\right)
,\int_{\mathbb{G}}\sum_{k=1}^{N}h_{k}\left(  x\circ y^{-1}\right)
Z_{k}\left(  x,y\right)  \varphi\left(  x\right)  dx\right\rangle \\
&  =\left\langle \partial_{y}^{\alpha^{\prime}}g\left(  t,y\right)
,\int_{\mathbb{G}}\sum_{k=1}^{N}h_{k}\left(  w\right)  Z_{k}\left(  w\circ
y,y\right)  \varphi\left(  w\circ y\right)  dw\right\rangle
\end{align*}
since the function $y\mapsto\int_{\mathbb{G}}\sum_{k=1}^{N}h_{k}\left(
w\right)  Z_{k}\left(  w\circ y,y\right)  \varphi\left(  w\circ y\right)  dw$
belongs to $\mathcal{D}\left(  \Omega\right)  $
\[
=\left\langle g\left(  t,y\right)  ,\int_{\mathbb{G}}\sum_{k=1}^{N}%
h_{k}\left(  w\right)  \left(  -1\right)  ^{\left\vert \alpha^{\prime
}\right\vert }\partial_{y}^{\alpha^{\prime}}\left[  Z_{k}\left(  w\circ
y,y\right)  \varphi\left(  w\circ y\right)  \right]  dw\right\rangle
\]
Now,
\[
\left(  -1\right)  ^{\left\vert \alpha^{\prime}\right\vert }\partial
_{y}^{\alpha^{\prime}}\left[  Z_{k}\left(  w\circ y,y\right)  \varphi\left(
w\circ y\right)  \right]  =\sum_{\left\vert \beta\right\vert \leqslant
\left\vert \alpha^{\prime}\right\vert }\left(  D^{\beta}\varphi\right)
\left(  w\circ y\right)  a_{\beta,k}\left(  w,y\right)
\]
for suitable polynomials $a_{\beta,k}$, hence%
\begin{align*}
&  \left\langle T_{V}u\left(  t,\cdot\right)  ,\varphi\right\rangle
=\int_{\mathbb{G}}g\left(  t,y\right)  \left(  \int_{\mathbb{G}}\sum_{k=1}%
^{N}h_{k}\left(  w\right)  \sum_{\left\vert \beta\right\vert \leqslant
\left\vert \alpha^{\prime}\right\vert }\left(  D^{\beta}\varphi\right)
\left(  w\circ y\right)  a_{\beta,k}\left(  w,y\right)  dw\right)  dy\\
&  =\int_{\mathbb{G}}g\left(  t,y\right)  \left(  \int_{\mathbb{G}}\sum
_{k=1}^{N}h_{k}\left(  x\circ y^{-1}\right)  \sum_{\left\vert \beta\right\vert
\leqslant\left\vert \alpha^{\prime}\right\vert }\left(  D^{\beta}%
\varphi\right)  \left(  x\right)  a_{\beta,k}\left(  x\circ y^{-1},y\right)
dx\right)  dy\\
&  =\int_{\mathbb{G}}\sum_{\left\vert \beta\right\vert \leqslant\left\vert
\alpha^{\prime}\right\vert }\left(  D^{\beta}\varphi\right)  \left(  x\right)
\sum_{k=1}^{N}\left(  \int_{\mathbb{G}}g\left(  t,y\right)  h_{k}\left(
x\circ y^{-1}\right)  a_{\beta,k}\left(  x\circ y^{-1},y\right)  dy\right)  dx
\end{align*}
Next, observe that
\[
b_{\beta}\left(  t,x\right)  =\sum_{k=1}^{N}\int_{\mathbb{G}}g\left(
t,y\right)  h_{k}\left(  x\circ y^{-1}\right)  a_{\beta,k}\left(  x\circ
y^{-1},y\right)  dy
\]
belongs to $L^{2}\left(  \left(  0,T\right)  ,L_{loc}^{1}\left(
\Omega^{\prime}\right)  \right)  $, since $g\in L^{2}\left(  \left(
0,T\right)  ,L_{loc}^{1}\left(  \Omega\right)  \right)  $ , $h_{k}\in L^{1}$
and is compactly supported in $V$, $a_{\beta,k}$ are polynomials: for every
$K\Subset\Omega^{\prime}$ there exist $V$ and $K^{\prime}$ such that $K\Subset
K^{\prime}\Subset\Omega^{\prime}$ such that%
\begin{align*}
\int_{K}\left\vert b_{\beta}\left(  t,x\right)  \right\vert dx  &
\leqslant\sum_{k=1}^{N}\int_{K}\int_{\mathbb{G}}\left\vert g\left(
t,y\right)  h_{k}\left(  x\circ y^{-1}\right)  a_{\beta,k}\left(  x\circ
y^{-1},y\right)  \right\vert dydx\\
&  \leqslant c\int_{K^{\prime}}\left\vert g\left(  t,y\right)  \right\vert dy
\end{align*}
so that%
\[
\int_{0}^{T}\left\Vert b_{\beta}\left(  t,\cdot\right)  \right\Vert
_{L^{1}\left(  K\right)  }^{2}dt\leqslant c\int_{0}^{T}\left\Vert g\left(
t,\cdot\right)  \right\Vert _{L^{1}\left(  K^{\prime}\right)  }^{2}dt
\]

Hence, letting%
\[
A_{\beta}\left(  t,x\right)  =\left(  -1\right)  ^{\left\vert \beta\right\vert
}b_{\beta}\left(  t,x\right)
\]
we can write%
\begin{align*}
\left\langle T_{V}u\left(  t,\cdot\right)  ,\varphi\right\rangle  &
=\int_{\mathbb{G}}\sum_{\left\vert \beta\right\vert \leqslant\left\vert
\alpha^{\prime}\right\vert }\left(  -1\right)  ^{\left\vert \beta\right\vert
}A_{\beta}\left(  t,x\right)  \left(  \partial_{x}^{\beta}\varphi\right)
\left(  x\right)  dx\\
&  =\left\langle \sum_{\left\vert \beta\right\vert \leqslant\left\vert
\alpha\right\vert -1}\partial_{x}^{\beta}A_{\beta}\left(  t,\cdot\right)
,\varphi\right\rangle
\end{align*}
with $A_{\beta}\in L^{2}\left(  \left(  0,T\right)  ,L_{loc}^{1}\left(
\Omega^{\prime}\right)  \right)  $, hence $T_{V}u$ has the desired structure.

(2) Inequality%
\[
\left\Vert T_{V}u\left(  t,\cdot\right)  \right\Vert _{L^{p^{\prime}}\left(
\Omega^{\prime}\right)  }\leqslant c\left\Vert u\left(  t,\cdot\right)
\right\Vert _{L^{p}\left(  \Omega\right)  }\text{ for a.e. }t\in\left[
0,T\right]
\]
follows from (\ref{sprt T}) and Young's inequality since, by
(\ref{Prop gamma tilde}), $\widetilde{\gamma}\in L^{r}\left(  \mathbb{G}%
\right)  $ \ for $1\leqslant r<\frac{N}{N-2}$. Taking $L^{2}\left(
0,T\right)  $ norms we get (2).

(3) We know that any derivative $\partial_{x_{i}}\widetilde{\gamma}$
($i=1,2,...,N$) is integrable and supported in $V$, hence each function
$X_{i}^{R}\widetilde{\gamma}$ ($i=1,2,...,N$) is a linear combination with
polynomial coefficients of integrable functions, compactly supported in $V$,
so that $X_{i}^{R}\widetilde{\gamma}\in L^{1}\left(  \mathbb{G}\right)  $.
Also, for a.e. $t\in\left[  0,T\right]  $, $u\chi_{\Omega^{\prime\prime}%
}\left(  t,\cdot\right)  \in L^{2}\left(  \mathbb{G}\right)  $ hence by
Young's inequality%
\[
\chi_{\Omega^{\prime}}T_{V}u\left(  t,\cdot\right)  =\widetilde{\gamma}%
\ast\left(  u\left(  t,\cdot\right)  \chi_{\Omega^{\prime\prime}}\right)  \in
L^{2}\left(  \mathbb{G}\right)  \text{,}%
\]
that is $T_{V}u\left(  t,\cdot\right)  \in L^{2}\left(  \Omega^{\prime
}\right)  $, with%
\[
\left\Vert T_{V}u\left(  t,\cdot\right)  \right\Vert _{L^{2}\left(
\Omega^{\prime}\right)  }\leqslant\left\Vert \widetilde{\gamma}\right\Vert
_{L^{1}}\left\Vert u\left(  t,\cdot\right)  \right\Vert _{L^{2}\left(
\Omega^{\prime\prime}\right)  }%
\]
and
\[
\chi_{\Omega^{\prime}}X_{i}^{R}T_{V}u\left(  t,\cdot\right)  =\left(
X_{i}^{R}\widetilde{\gamma}\right)  \ast\left(  u\chi_{\Omega^{\prime\prime}%
}\right)  \in L^{2}\left(  \mathbb{G}\right)  ,
\]
that is $X_{i}^{R}T_{V}u\in L^{2}\left(  \Omega^{\prime}\right)  $. This holds
for $i=1,2,...,N$ (not just for the first $q$ derivatives). Now, let us recall
that the left invariant vector fields $X_{i}$ ($i=1,2,...,N$) can be written
as linear combinations with polynomial coefficients of the right invariant
vector fields $X_{i}^{R}$. Hence by the boundedness of $\Omega^{\prime}$ we
also have%
\[
X_{i}T_{V}u\left(  t,\cdot\right)  \in L^{2}\left(  \Omega^{\prime}\right)
\text{ for }i=1,2,...,N
\]
with
\[
\left\Vert X_{i}T_{V}u\left(  t,\cdot\right)  \right\Vert _{L^{2}\left(
\Omega^{\prime}\right)  }\leqslant c\sum_{j=1}^{N}\left\Vert X_{j}%
^{R}\widetilde{\gamma}\right\Vert _{L^{1}}\left\Vert u\left(  t,\cdot\right)
\right\Vert _{L^{2}\left(  \Omega^{\prime\prime}\right)  }%
\]
in particular $T_{V}u\in L^{2}\left(  \left(  0,T\right)  ,W_{X}^{1,2}\left(
\Omega^{\prime}\right)  \right)  $ with%
\begin{align*}
\left\Vert T_{V}u\right\Vert _{L^{2}\left(  \left(  0,T\right)  ,W_{X}%
^{1,2}\left(  \Omega^{\prime}\right)  \right)  }  &  \leqslant\left\Vert
\widetilde{\gamma}\right\Vert _{L^{1}}\left\Vert u\right\Vert _{L^{2}\left(
\left(  0,T\right)  \times\Omega^{\prime\prime}\right)  }\\
&  +c\sum_{j=1}^{N}\left\Vert X_{j}^{R}\widetilde{\gamma}\right\Vert _{L^{1}%
}\left\Vert u\right\Vert _{L^{2}\left(  \left(  0,T\right)  \times
\Omega^{\prime\prime}\right)  }.
\end{align*}

(4). Let $u\in C^{\infty}\left(  \overline{\Omega}\right)  $. From
\[
T_{V}u\left(  t,x\right)  =\int_{\mathbb{G}}\widetilde{\gamma}\left(  x\circ
y^{-1}\right)  u\left(  t,y\right)  dy=\int_{\mathbb{G}}\widetilde{\gamma
}\left(  w\right)  u\left(  t,w^{-1}\circ x\right)  dw
\]
we read that for $x\in\Omega^{\prime}$ and any left invariant differential
operator $\mathcal{P}$ we can write%
\[
\mathcal{P}T_{V}u\left(  t,x\right)  =\int_{V}\widetilde{\gamma}\left(
w\right)  \mathcal{P}u\left(  t,w^{-1}\circ x\right)  dw,
\]
showing that $\mathcal{P}T_{V}u\left(  t,\cdot\right)  \in C^{\infty}\left(
\Omega^{\prime}\right)  $. Moreover,%
\[
\max_{x\in\Omega^{\prime}}\left\vert \mathcal{P}T_{V}u\left(  t,x\right)
\right\vert \leqslant c\max_{x\in\Omega}\left\vert \mathcal{P}u\left(
t,x\right)  \right\vert
\]
so that, for every $k=0,1,2,...$%
\[
\left\Vert T_{V}u\left(  t,\cdot\right)  \right\Vert _{C^{k}\left(
\Omega^{\prime}\right)  }\leqslant\left\Vert u\left(  t,\cdot\right)
\right\Vert _{C^{k}\left(  \Omega\right)  }%
\]
and also%
\[
\int_{0}^{T}\left\Vert T_{V}u\left(  t,\cdot\right)  \right\Vert
_{C^{k}\left(  \Omega^{\prime}\right)  }^{2}dt\leqslant\int_{0}^{T}\left\Vert
u\left(  t,\cdot\right)  \right\Vert _{C^{k}\left(  \Omega\right)  }^{2}dt.
\]
Hence $T_{V}u\in L^{2}\left(  \left(  0,T\right)  ,C^{\infty}\left(
\overline{\Omega^{\prime}}\right)  \right)  .$
\end{proof}

\begin{corollary}
\label{Corollary TKf}Let $\Omega^{\prime}\Subset\Omega\Subset\mathbb{G}$. For
every distribution $u\in\mathcal{D}^{\prime}\left(  \left(  0,T\right)
\times\Omega\right)  $ such that $u=\partial_{x}^{\alpha}g$ for some multindex
$\alpha$ and $g\in L^{2}\left(  \left(  0,T\right)  ,L_{loc}^{1}\left(
\Omega\right)  \right)  $ there exist a neighborhood of the origin $V$ and an
integer $K$ such that $\left(  T_{V}\right)  ^{K}u\in L^{2}\left(  \left(
0,T\right)  ,W_{X}^{1,2}\left(  \Omega^{\prime}\right)  \right)  $.
\end{corollary}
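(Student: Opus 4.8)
The plan is to iterate $T_V$ a fixed finite number $K$ of times, applying in succession the three regularizing properties of Proposition~\ref{Prop_regularizing T}: property (1) to lower the order of differentiation down to zero, property (2) to raise the integrability exponent up to $2$, and property (3) to gain the final $X$-derivative. Since each application of $T_V$ shrinks the domain, the preliminary step is to fix $K$ first, then to interpolate a chain of nested domains $\Omega'=\Omega_0\Subset\Omega_1\Subset\cdots\Subset\Omega_K=\Omega$, and finally to choose a single neighborhood $V$ of the origin so small that $V^{-1}\circ\Omega_j\subset\Omega_{j+1}$ for every $j=0,1,\dots,K-1$ (and small enough that $\widetilde{\gamma}$ is defined as in Proposition~\ref{Lemma-Parametrix}). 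As there are only finitely many such inclusions and each holds for all sufficiently small $V$, a single $V$ (and a single $\widetilde{\gamma}$) serves every step, so that $(T_V)^K$ genuinely means iterating one fixed operator; at its $j$-th application $T_V$ carries an object living on $\Omega_{K-j+1}$ to one living on $\Omega_{K-j}$.

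First I would apply property (1) exactly $|\alpha|$ times. Writing $u=\partial_x^\alpha g$ with $g\in L^2((0,T),L^1_{loc}(\Omega))$, property (1) gives $T_Vu=\sum_{|\beta|\le|\alpha|-1}\partial_x^\beta A_\beta$ with each $A_\beta\in L^2((0,T),L^1_{loc})$; applying $T_V$ again to each summand, by linearity, lowers the maximal order by at least one more. Hence after $|\alpha|$ applications the maximal surviving order is $0$, and $(T_V)^{|\alpha|}u$ is a genuine function in $L^2((0,T),L^1_{loc}(\Omega_{K-|\alpha|}))$.

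Next I would bootstrap integrability via property (2). Starting from exponent $p_0=1$, each application yields any exponent with $\tfrac{1}{p'}>\tfrac{1}{p}-\tfrac{2}{N}$, so by decreasing $1/p$ by a fixed positive amount at each stage one reaches an exponent $\ge 2$ after a finite number $m=m(N)$ of steps; the hypothesis $p<N/2$ needed to invoke property (2) is never violated, since on the relevant dimensions $N\ge3$ the target $p'=2$ is attained in a single step when $N=3$, while for $N\ge4$ the current exponent stays $<2\le N/2$ until the process stops. As the domains are bounded, $L^{p'}$ with $p'\ge 2$ embeds in $L^2_{loc}$, so $(T_V)^{|\alpha|+m}u\in L^2((0,T),L^2_{loc}(\Omega_{K-|\alpha|-m}))$, and one last application of property (3) gives $(T_V)^{K}u\in L^2((0,T),W_X^{1,2}(\Omega'))$ with $K=|\alpha|+m+1$, which is the assertion.

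The order-lowering count and the Young-type integrability count are routine; the one point deserving care is the preliminary paragraph, namely arranging the nested domains so that a single $V$ is compatible with all $K$ iterations, which is what makes the composition $(T_V)^K$ meaningful as a power of one fixed operator rather than a product of operators attached to different domain pairs.
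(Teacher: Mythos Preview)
Your proposal is correct and follows essentially the same approach as the paper, which simply refers to \cite[Corollary 4.5]{BB1}: iterate property~(1) of Proposition~\ref{Prop_regularizing T} $|\alpha|$ times to reduce to a function in $L^2((0,T),L^1_{loc})$, then property~(2) finitely many times to reach $L^2((0,T),L^2_{loc})$, and finally property~(3) once to land in $L^2((0,T),W_X^{1,2}(\Omega'))$, having first fixed $K$ and a nested chain of domains so that a single sufficiently small $V$ serves all iterations. Your discussion of the integrability bootstrap (including the check that the constraint $p<N/2$ is never an obstruction in dimensions $N\ge 3$) and of the domain-shrinking bookkeeping is more explicit than what the paper records, but the argument is the intended one.
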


The proof follows exactly that of \cite[Corollary 4.5]{BB1}.

\begin{proposition}
\label{ScambioLT}Let $\Omega^{\prime}\Subset\Omega$ and $V$ small enough so
that $V\circ\Omega^{\prime}\Subset\Omega$. Then for any distribution $u\in
L^{2}\left(  \left(  0,T\right)  ,\mathcal{D}^{\prime}\left(  \Omega\right)
\right)  $ and every left invariant operator $\mathcal{P}$ on $\mathbb{G}$ we
have%
\begin{equation}
\mathcal{P}T_{V}u=T_{V}\mathcal{P}u\text{ in }L^{2}\left(  \left(  0,T\right)
,\mathcal{D}^{\prime}\left(  \Omega^{\prime}\right)  \right)  \label{PT=TP}%
\end{equation}
Also, if $u\in W^{1,2}\left(  \left(  0,T\right)  ,\mathcal{D}^{\prime}\left(
\Omega\right)  \right)  $ then%
\begin{equation}
\mathcal{L}T_{V}u=T_{V}\mathcal{L}u\text{ in }L^{2}\left(  \left(  0,T\right)
,\mathcal{D}^{\prime}\left(  \Omega^{\prime}\right)  \right)  \label{LT=TL}%
\end{equation}

\end{proposition}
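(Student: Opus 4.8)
The plan is to exploit the fact that $T_{V}$ is nothing but \emph{left} convolution with $\widetilde{\gamma}$, a structure which commutes with left invariant operators, and to make this rigorous at the distributional level by transferring every operator onto the test function through the transpose $T_{V}^{\ast}$. First I would record the explicit form of that transpose: starting from $T_{V}^{\ast}\varphi\left(  z\right)  =\int_{\mathbb{G}}\widetilde{\gamma}\left(  x\circ z^{-1}\right)  \varphi\left(  x\right)  dx$ and substituting $x=y\circ z$ (so that $x\circ z^{-1}=y$ and $dx=dy$ by bi-invariance of Haar measure) one gets
\[
T_{V}^{\ast}\varphi\left(  z\right)  =\int_{\mathbb{G}}\widetilde{\gamma}\left(  y\right)  \varphi\left(  y\circ z\right)  dy=\int_{\mathbb{G}}\widetilde{\gamma}\left(  y\right)  \left(  L_{y}\varphi\right)  \left(  z\right)  dy,
\]
i.e. $T_{V}^{\ast}\varphi$ is an average of left translates of $\varphi$ weighted by $\widetilde{\gamma}$.

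The heart of the argument is then a single commutation. Since $\widetilde{\gamma}$ is smooth away from the origin, compactly supported in $V$, and $T_{V}^{\ast}$ was already shown to send test functions to test functions, I may differentiate under the integral sign; using that for any left invariant operator $\mathcal{Q}$ one has $\mathcal{Q}\left(  L_{y}\varphi\right)  =L_{y}\left(  \mathcal{Q}\varphi\right)  $ by the very definition of left invariance,
\[
\mathcal{Q}\,T_{V}^{\ast}\varphi=\int_{\mathbb{G}}\widetilde{\gamma}\left(  y\right)  \mathcal{Q}\left(  L_{y}\varphi\right)  dy=\int_{\mathbb{G}}\widetilde{\gamma}\left(  y\right)  L_{y}\left(  \mathcal{Q}\varphi\right)  dy=T_{V}^{\ast}\left(  \mathcal{Q}\varphi\right)  ,
\]
so that $T_{V}^{\ast}$ commutes with every left invariant operator.

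I would apply this with $\mathcal{Q}=\mathcal{P}^{\ast}$, noting that the transpose of a left invariant operator is again left invariant (for a single generator this is the relation $X_{i}^{\ast}=-X_{i}$ of (\ref{transposed}), and in general it follows from unimodularity of $\mathbb{G}$). Hence $T_{V}^{\ast}\mathcal{P}^{\ast}\varphi=\mathcal{P}^{\ast}T_{V}^{\ast}\varphi$ for every $\varphi\in\mathcal{D}\left(  \Omega^{\prime}\right)  $, and using the defining relation (\ref{T-distr}) for $T_{V}$ on distributions, for a.e. $t$ and every such $\varphi$,
\[
\langle\mathcal{P}T_{V}u,\varphi\rangle=\langle T_{V}u,\mathcal{P}^{\ast}\varphi\rangle=\langle u,T_{V}^{\ast}\mathcal{P}^{\ast}\varphi\rangle=\langle u,\mathcal{P}^{\ast}T_{V}^{\ast}\varphi\rangle=\langle T_{V}\mathcal{P}u,\varphi\rangle,
\]
which is exactly (\ref{PT=TP}). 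Along the way one checks that $\mathcal{P}u\in L^{2}\left(  \left(  0,T\right)  ,\mathcal{D}^{\prime}\left(  \Omega\right)  \right)  $ and that all the test functions $\mathcal{P}^{\ast}\varphi$ and $T_{V}^{\ast}\varphi$ remain supported inside $\Omega$, so that every pairing is legitimate.

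Finally, to obtain (\ref{LT=TL}) I would split $\mathcal{L}=\mathcal{A}-\partial_{t}$ with $\mathcal{A}=\sum_{i,j}a_{ij}\left(  t\right)  X_{i}X_{j}$. Applying (\ref{PT=TP}) with $\mathcal{P}=X_{i}X_{j}$, multiplying by $a_{ij}\left(  t\right)  $ and summing gives $\mathcal{A}T_{V}u=T_{V}\mathcal{A}u$, the scalar factors $a_{ij}\left(  t\right)  $ passing freely through $T_{V}$ since it acts in $x$ only. For the time derivative, exactly because $T_{V}$ is convolution in the space variable alone, it commutes with $\partial_{t}$: testing against $\varphi\otimes\psi$ and applying the definition of the distributional $t$-derivative of $u\in W^{1,2}\left(  \left(  0,T\right)  ,\mathcal{D}^{\prime}\left(  \Omega\right)  \right)  $ to the fixed test function $T_{V}^{\ast}\varphi$ yields $\partial_{t}T_{V}u=T_{V}\partial_{t}u$; subtracting gives (\ref{LT=TL}). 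I expect the genuine mathematical content to be confined to the one-line commutation of $T_{V}^{\ast}$ with left invariant operators; the main obstacle is rather the bookkeeping that keeps the distributional identities meaningful — verifying admissibility and support of the transformed test functions, and treating the ``for a.e. $t$'' equalities as explained in the Remark preceding this proof.
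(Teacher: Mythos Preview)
Your proof is correct and follows essentially the same route as the paper: both arguments reduce to the single observation that $T_{V}^{\ast}$ commutes with any left invariant operator (the paper performs this commutation inline within the chain of pairings, whereas you isolate it as a preliminary lemma), and both handle $\partial_{t}$ by testing against tensor products $\varphi\otimes\psi$ and transferring the time derivative to $\psi$. The only cosmetic difference is that you explicitly decompose $\mathcal{L}=\mathcal{A}-\partial_{t}$ and invoke (\ref{PT=TP}) for each $X_{i}X_{j}$, while the paper leaves this step implicit.
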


\begin{remark}
\label{Remark scambio LT}The previous proposition can be obviously iterated
writing%
\begin{align*}
\mathcal{P}T_{V}^{K}u  &  =T_{V}^{K}\mathcal{P}u\text{ in }L^{2}\left(
\left(  0,T\right)  ,\mathcal{D}^{\prime}\left(  \Omega^{\prime}\right)
\right) \\
\mathcal{L}T_{V}^{K}u  &  =T_{V}^{K}\mathcal{L}u\text{ in }L^{2}\left(
\left(  0,T\right)  ,\mathcal{D}^{\prime}\left(  \Omega^{\prime}\right)
\right)
\end{align*}
for any fixed positive integer $K$, provided $V$ is chosen small enough to
have%
\[
\underset{K\text{ times}}{\underbrace{V\circ V\circ...\circ V}}\circ
\Omega^{\prime}\Subset\Omega.
\]

\end{remark}

\begin{proof}
Let $u\in L^{2}\left(  \left(  0,T\right)  ,\mathcal{D}^{\prime}\left(
\Omega\right)  \right)  $, then $T_{V}u\in L^{2}\left(  \left(  0,T\right)
,\mathcal{D}^{\prime}\left(  \Omega^{\prime}\right)  \right)  $ and for every
$\varphi\in\mathcal{D}\left(  \Omega^{\prime}\right)  $ we can write, denoting
by $\mathcal{P}^{\mathcal{\ast}}$ the transpose operator of $\mathcal{P}$ and
recalling that $\mathcal{P}^{\ast}$ is still left invariant,%
\begin{align*}
\left\langle \mathcal{P}T_{V}u\left(  t,\cdot\right)  ,\varphi\right\rangle
&  =\left\langle T_{V}u\left(  t,\cdot\right)  ,\mathcal{P}^{\ast}%
\varphi\right\rangle =\left\langle u\left(  t,y\right)  ,\int_{\mathbb{G}%
}\widetilde{\gamma}\left(  x\circ y^{-1}\right)  \mathcal{P}^{\ast}%
\varphi\left(  x\right)  dx\right\rangle \\
&  =\left\langle u\left(  t,y\right)  ,\int_{\mathbb{G}}\widetilde{\gamma
}\left(  w\right)  \mathcal{P}^{\ast}\varphi\left(  w\circ y\right)
dw\right\rangle \\
&  =\left\langle u\left(  t,y\right)  ,\mathcal{P}^{\ast}\int_{\mathbb{G}%
}\widetilde{\gamma}\left(  w\right)  \varphi\left(  w\circ y\right)
dw\right\rangle \\
&  =\left\langle \mathcal{P}u\left(  t,y\right)  ,\int_{\mathbb{G}%
}\widetilde{\gamma}\left(  w\right)  \varphi\left(  w\circ y\right)
dw\right\rangle \\
&  =\left\langle \mathcal{P}u\left(  t,y\right)  ,\int_{\mathbb{G}%
}\widetilde{\gamma}\left(  x\circ y^{-1}\right)  \varphi\left(  x\right)
dx\right\rangle \\
&  =\left\langle T_{V}\mathcal{P}u\left(  t,\cdot\right)  ,\varphi
\right\rangle .
\end{align*}
where the above equalities holds for a.e. $t$, as usual. This implies
(\ref{PT=TP}).

To prove (\ref{LT=TL}) it is enough to show that
\[
\mathcal{\partial}_{t}T_{V}u=T_{V}\mathcal{\partial}_{t}u\text{ for }u\in
W^{1,2}\left(  \left(  0,T\right)  ,\mathcal{D}^{\prime}\left(  \Omega\right)
\right)  .
\]
Actually, for every $\psi\in\mathcal{D}\left(  0,T\right)  $ and $\varphi
\in\mathcal{D}\left(  \Omega^{\prime}\right)  $ we have%
\begin{align*}
&  \int_{0}^{T}\psi\left(  t\right)  \left\langle \mathcal{\partial}_{t}%
T_{V}u\left(  t,\cdot\right)  ,\varphi\right\rangle dt=\left\langle
\mathcal{\partial}_{t}T_{V}u,\varphi\otimes\psi\right\rangle \\
&  =-\left\langle T_{V}u,\varphi\otimes\mathcal{\partial}_{t}\psi\right\rangle
=-\int_{0}^{T}\mathcal{\partial}_{t}\psi\left(  t\right)  \left\langle
T_{V}u\left(  t,\cdot\right)  ,\varphi\right\rangle dt\\
&  =-\int_{0}^{T}\mathcal{\partial}_{t}\psi\left(  t\right)  \left\langle
u\left(  t,y\right)  ,\int_{\mathbb{G}}\widetilde{\gamma}\left(  x\circ
y^{-1}\right)  \varphi\left(  x\right)  dx\right\rangle dt
\end{align*}%
\begin{align*}
&  =-\left\langle u,T_{V}^{\ast}\varphi\otimes\mathcal{\partial}_{t}%
\psi\right\rangle =\left\langle \mathcal{\partial}_{t}u,T_{V}^{\ast}%
\varphi\otimes\psi\right\rangle \\
&  =\int_{0}^{T}\psi\left(  t\right)  \left\langle \mathcal{\partial}%
_{t}u\left(  t,y\right)  ,\int_{\mathbb{G}}\widetilde{\gamma}\left(  x\circ
y^{-1}\right)  \varphi\left(  x\right)  dx\right\rangle dt\\
&  =\int_{0}^{T}\psi\left(  t\right)  \left\langle T_{V}\mathcal{\partial}%
_{t}u\left(  t,\cdot\right)  ,\varphi\right\rangle dt.
\end{align*}
Hence $\mathcal{\partial}_{t}T_{V}u=T_{V}\mathcal{\partial}_{t}u.$
\end{proof}

\begin{lemma}
\label{tfCinf fCinf}Let $\Omega^{\prime}\Subset\Omega^{\prime\prime}%
\Subset\Omega$ and $u\in L^{2}\left(  \left(  0,T\right)  ,\mathcal{D}%
^{\prime}\left(  \Omega\right)  \right)  $ satisfying the $x$-finite order
assumption in $\Omega$. There exists $V$ small enough so that if%
\[
T_{V}u\in L^{2}\left(  \left(  0,T\right)  ,C^{\infty}\left(  \overline
{\Omega^{\prime\prime}}\right)  \right)
\]
then $u\in L^{2}\left(  \left(  0,T\right)  ,W_{X}^{1,2}\left(  \Omega
^{\prime}\right)  \right)  $.
\end{lemma}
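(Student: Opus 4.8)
The plan is to run the classical parametrix argument, now in the scale $L^{2}\left(\left(0,T\right),\cdot\right)$, adapting \cite[\S 4]{BB1}. The starting point is the identity $\mathcal{L}^{R}\widetilde{\gamma}=-\delta+\omega$ of Proposition \ref{Lemma-Parametrix}. Writing $T_{V}u\left(t,\cdot\right)=\widetilde{\gamma}\ast u\left(t,\cdot\right)$ and using the distributional version of Proposition \ref{Prop conv distrib} (ii) (the operator $\mathcal{L}^{R}=\sum_{j=1}^{N}\left(X_{j}^{R}\right)^{2}$ being right invariant), I would first establish, for $V$ small enough and in $L^{2}\left(\left(0,T\right),\mathcal{D}^{\prime}\left(\Omega^{\prime}\right)\right)$,
\[
\mathcal{L}^{R}T_{V}u=\left(\mathcal{L}^{R}\widetilde{\gamma}\right)\ast u=-u+\omega\ast u,
\]
that is
\[
u=\omega\ast u-\mathcal{L}^{R}T_{V}u.
\]
By hypothesis $T_{V}u\in L^{2}\left(\left(0,T\right),C^{\infty}\left(\overline{\Omega^{\prime\prime}}\right)\right)$, and since $\mathcal{L}^{R}$ is a second order operator in $x$ with smooth coefficients, the term $\mathcal{L}^{R}T_{V}u$ lies in $L^{2}\left(\left(0,T\right),C^{\infty}\left(\overline{\Omega^{\prime}}\right)\right)$, hence a fortiori in $L^{2}\left(\left(0,T\right),W_{X}^{1,2}\left(\Omega^{\prime}\right)\right)$. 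So the whole problem is reduced to showing that the single term $\omega\ast u$ enjoys the same regularity.

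Next I would iterate the identity. Substituting the expression for $u$ into the term $\omega\ast u$ repeatedly, after $K$ steps one obtains
\[
u=\omega^{\ast K}\ast u-\sum_{j=0}^{K-1}\omega^{\ast j}\ast\left(\mathcal{L}^{R}T_{V}u\right),
\]
where $\omega^{\ast K}$ denotes the $K$-fold convolution of $\omega$ with itself and $\omega^{\ast 0}\ast v=v$. Here $V$ must be chosen small enough that the $K$ nested convolutions keep supports inside $\Omega$, exactly as in Remark \ref{Remark scambio LT}; since $u$ satisfies the $x$-finite order assumption (Definition \ref{Def x-finite}) with a \emph{fixed} order, the number $K$ of iterations needed is known in advance, so one first fixes $K$ and then shrinks $V$. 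Each summand $\omega^{\ast j}\ast\left(\mathcal{L}^{R}T_{V}u\right)$ is the convolution of a compactly supported, locally integrable kernel with a function that is smooth in $x$ and $L^{2}$ in $t$ (one may differentiate under the integral sign, the left invariant derivatives falling on the smooth factor), hence is again in $L^{2}\left(\left(0,T\right),C^{\infty}\left(\overline{\Omega^{\prime}}\right)\right)$. Thus the finite sum is harmless and everything is reduced to the term $\omega^{\ast K}\ast u$.

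The heart of the matter, and the step I expect to be the main obstacle, is therefore to prove that $\omega^{\ast K}\ast u\in L^{2}\left(\left(0,T\right),W_{X}^{1,2}\left(\Omega^{\prime}\right)\right)$ once $K$ is large. Here I would use $u=\partial_{x}^{\alpha}h$ with $h\in L^{2}\left(\left(0,T\right),L_{loc}^{1}\left(\Omega\right)\right)$ together with the fact that convolution with $\omega$ has the same regularizing features as $T_{V}$: by the construction of Proposition \ref{Lemma-Parametrix} the remainder $\omega$, not only $\widetilde{\gamma}$, satisfies size and (away from the pole) derivative bounds making it integrable and compactly supported, so the operator $v\mapsto\omega\ast v$ verifies the analogues of parts (1)--(3) of Proposition \ref{Prop_regularizing T}. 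Consequently the convolution powers $\omega^{\ast K}$ become progressively more regular, and repeating for this kernel the order-reduction and integrability-gain argument that yields Corollary \ref{Corollary TKf} shows that, for $K$ exceeding a quantity controlled by $\left\vert\alpha\right\vert$, the distribution $\omega^{\ast K}\ast u$ is an honest element of $L^{2}\left(\left(0,T\right),W_{X}^{1,2}\left(\Omega^{\prime}\right)\right)$. Combining the three contributions gives $u\in L^{2}\left(\left(0,T\right),W_{X}^{1,2}\left(\Omega^{\prime}\right)\right)$, as claimed. The delicate points are exactly the support bookkeeping and the verification that $\omega$ (and not merely $\widetilde{\gamma}$) regularizes finite order distributions; both are carried out by adapting the computations of \cite[\S 4]{BB1}.
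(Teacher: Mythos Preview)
Your proposal is correct and follows essentially the same route as the paper: both start from the parametrix identity $u=\omega\ast u-\mathcal{L}^{R}T_{V}u$, use the hypothesis to make $\mathcal{L}^{R}T_{V}u$ smooth in $x$, and then exploit that convolution with $\omega$ enjoys the same regularizing properties (1)--(3) of Proposition~\ref{Prop_regularizing T} as $T_{V}$ does. The only difference is organizational: you telescope the identity to isolate $\omega^{\ast K}\ast u$ and treat it in one blow via the analog of Corollary~\ref{Corollary TKf}, whereas the paper iterates the identity step by step, first lowering the order of the representation $u=\partial_{x}^{\alpha}g$ down to $u\in L^{2}\left(\left(0,T\right),L_{loc}^{1}\right)$, then raising the integrability exponent by Young's inequality, and finally gaining one $X$-derivative.
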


\begin{proof}
For fixed $\Omega^{\prime}\Subset\Omega$ and positive integer $K$ to be chosen
later, there exists a neighborhood $V$ of the origin such that
\[
\underset{K\text{ times}}{\underbrace{V\circ V\circ...\circ V}}\circ
\Omega^{\prime}\Subset\Omega.
\]
Let%
\begin{align*}
\Omega_{j}  &  =\underset{j\text{ times}}{\underbrace{V\circ V\circ...\circ
V}}\circ\Omega^{\prime}\text{ for }j=1,2,...,K\\
\Omega_{0}  &  =\Omega^{\prime}.
\end{align*}
so that $\Omega_{K}\Subset\Omega$. Let $\varphi\in C_{0}^{\infty}\left(
\Omega_{K}\right)  $, using the definition of $T_{V}$ and Lemma
\ref{Lemma-Parametrix} we obtain%
\begin{align*}
\mathcal{L}^{R}T_{V}u\left(  t,\cdot\right)   &  =\mathcal{L}^{R}\left(
\widetilde{\gamma}\ast u\left(  t,\cdot\right)  \right)  =\mathcal{L}%
^{R}\widetilde{\gamma}\ast u\left(  t,\cdot\right) \\
&  =\left(  -\delta+\omega\right)  \ast u\left(  t,\cdot\right)  =-u\left(
t,\cdot\right)  +\omega\ast u\left(  t,\cdot\right)
\end{align*}

We know that $u=\partial_{x}^{\alpha}g$ for some multindex $\alpha$ and $g\in
L^{2}\left(  \left(  0,T\right)  ,L_{loc}^{1}\left(  \Omega\right)  \right)
$. Note that the kernel $\omega$ satisfies the same properties of
$\widetilde{\gamma}$ in terms of support and growth estimate. Then, arguing as
in the proof of Proposition \ref{Prop_regularizing T} we see that%
\[
\omega\ast u\left(  t,\cdot\right)  =\sum_{\left\vert \beta\right\vert
\leqslant A-1}D_{x}^{\beta}A_{\beta}\left(  t,\cdot\right)
\]
with $A_{\beta}\in L^{2}\left(  \left(  0,T\right)  ,L_{loc}^{1}\left(
\Omega\right)  \right)  $ so that
\[
u=\sum_{\left\vert \beta\right\vert \leqslant A-1}D^{\beta}A_{\beta
}-\mathcal{L}^{R}T_{V}u\text{ in }L^{2}\left(  \left(  0,T\right)
,\mathcal{D}^{\prime}\left(  \Omega_{K}\right)  \right)  \text{,}%
\]
with $\mathcal{L}^{R}T_{V}u\in L^{2}\left(  \left(  0,T\right)  ,C^{\infty
}\left(  \overline{\Omega_{K}}\right)  \right)  $ since $T_{V}u\in
L^{2}\left(  \left(  0,T\right)  ,C^{\infty}\left(  \overline{\Omega_{K}%
}\right)  \right)  $ by assumption. Actually, for every $k=0,1,2,...,$%
\[
\left\Vert \mathcal{L}^{R}T_{V}u\left(  t,\cdot\right)  \right\Vert
_{C^{k}\left(  \Omega_{K}\right)  }\leqslant c\left\Vert T_{V}u\left(
t,\cdot\right)  \right\Vert _{C^{k+2}\left(  \Omega_{K}\right)  }%
\]
hence for every $k=0,1,2,...$%
\[
\int_{0}^{T}\left\Vert \mathcal{L}^{R}T_{V}u\left(  t,\cdot\right)
\right\Vert _{C^{k}\left(  \Omega_{K}\right)  }^{2}dt\leqslant c\int_{0}%
^{T}\left\Vert T_{V}u\left(  t,\cdot\right)  \right\Vert _{C^{k+2}\left(
\Omega_{K}\right)  }^{2}dt<\infty.
\]
We can then start again with the identity%
\[
\mathcal{L}^{R}T_{V}u\left(  t,\cdot\right)  =-u\left(  t,\cdot\right)
+\omega\ast u\left(  t,\cdot\right)
\]
where now we know in advance that
\[
u=\sum_{\left\vert \beta\right\vert \leqslant A-1}D^{\beta}A_{\beta}\text{ in
}L^{2}\left(  \left(  0,T\right)  ,\mathcal{D}^{\prime}\left(  \Omega
_{K}\right)  \right)
\]
(the smooth function $\mathcal{L}^{R}T_{V}u$ can be absorbed in this
expression) with $A_{\beta}\in L^{2}\left(  \left(  0,T\right)  ,L_{loc}%
^{1}\left(  \Omega\right)  \right)  $ and, applying iteratively the above
argument, in $k_{1}$ steps we eventually conclude $u\in L^{2}\left(  \left(
0,T\right)  ,L_{loc}^{1}\left(  \Omega_{K-k_{1}}\right)  \right)  $. Hence%
\[
\mathcal{L}^{R}T_{V}u=-u+\omega\ast u\text{ in }L^{2}\left(  \left(
0,T\right)  ,\mathcal{D}^{\prime}\left(  \Omega_{K-k_{1}}\right)  \right)
\]
that is $u$ coincides with $\omega\ast u$ in $L^{2}\left(  \left(  0,T\right)
,\mathcal{D}^{\prime}\left(  \Omega_{K-k_{1}}\right)  \right)  $, modulo the
smooth function $\mathcal{L}^{R}T_{V}u$.

Let us reason again like in the proof of Proposition \ref{Prop_regularizing T}%
: since by Proposition \ref{Lemma-Parametrix}, $\omega\in L^{\frac{N-1}{N-2}%
}\left(  \mathbb{G}\right)  $ we see that $u\in L^{2}\left(  \left(
0,T\right)  ,L^{^{\frac{N-1}{N-2}}}\left(  \Omega_{K-k_{1}-1}\right)  \right)
$; then with $k_{2}$ iterations of this argument we conclude that $u\in
L^{2}\left(  \left(  0,T\right)  ,L^{2}\left(  \Omega_{K-k_{1}-1-k_{2}%
}\right)  \right)  $ and with one more iteration $u\in L^{2}\left(  \left(
0,T\right)  ,W_{X}^{1,2}\left(  \Omega_{K-k_{1}-1-k_{2}-1}\right)  \right)  $.
Picking finally $K=k_{1}+k_{2}+2$ we get the desired assertion.
\end{proof}

\bigskip

\begin{proof}
[Proof of Theorem \ref{Thm hypo}]Fix $\Omega^{\prime}\Subset\Omega
^{\prime\prime}\Subset\Omega^{\prime\prime\prime}\Subset\Omega$. By Corollary
\ref{Corollary TKf}, there exist a positive integer $K$ and a neighborhood $V$
of the origin such that $T_{V}^{K}u\in L^{2}\left(  \left(  0,T\right)
,W_{X}^{1,2}\left(  \Omega^{\prime\prime\prime}\right)  \right)  $. Applying
Corollary \ref{Corollary TKf} also to $\partial_{t}u$, and possibly choosing a
larger integer $K$ and a smaller neighborhood $V$, we can also assume
\[
T_{V}^{K}\partial_{t}u=\partial_{t}T_{V}^{K}u\in L^{2}\left(  \left(
0,T\right)  ,W_{X}^{1,2}\left(  \Omega^{\prime\prime\prime}\right)  \right)
,
\]
so that
\[
T_{V}^{K}u\in W^{1,2}\left(  \left(  0,T\right)  ,W_{X}^{1,2}\left(
\Omega^{\prime\prime\prime}\right)  \right)  .
\]
Let now $U\subseteq V$ a neighborhood of the origin such that%
\[
\underset{2K\text{ times}}{\underbrace{U\circ U\circ...\circ U}}\circ
\Omega^{\prime\prime}\Subset\Omega^{\prime\prime\prime}.
\]
Let%
\begin{align*}
\Omega_{j}  &  =\underset{j\text{ times}}{\underbrace{U\circ U\circ...\circ
U}}\circ\Omega^{\prime\prime}\text{ for }j=1,2,...,2K\\
\Omega_{0}  &  =\Omega^{\prime\prime};
\end{align*}
so that $\Omega_{2K}\Subset\Omega^{\prime\prime\prime}$. Clearly, it is still
true that
\[
T_{U}^{K}u\in L^{2}\left(  \left(  0,T\right)  ,W_{X}^{1,2}\left(
\Omega^{\prime\prime\prime}\right)  \right)
\]
(having replaced the operator $T_{V}$ with $T_{U}$, based on a smaller neighborhood).

By Proposition \ref{ScambioLT} and Remark \ref{Remark scambio LT} we have
\begin{equation}
\mathcal{L}\left(  T_{U}^{K}u\right)  =T_{U}^{K}F\text{ in }L^{2}\left(
\left(  0,T\right)  ,\mathcal{D}^{\prime}\left(  \Omega_{2K}\right)  \right)
.\label{TKLf}%
\end{equation}
Since, $F\in L^{2}\left(  \left(  0,T\right)  ,C^{\infty}\left(
\overline{\Omega}\right)  \right)  $, by point (4) in Proposition
\ref{Prop_regularizing T} we have $T_{U}^{K}F\in L^{2}\left(  \left(
0,T\right)  ,C^{\infty}\left(  \overline{\Omega_{2K}}\right)  \right)  $. By
(\ref{TKLf}) then $\mathcal{L}\left(  T_{U}^{K}u\right)  \in L^{2}\left(
\left(  0,T\right)  ,C^{\infty}\left(  \overline{\Omega_{2K}}\right)  \right)
$ and, since $T_{U}^{K}u\in W^{1,2}\left(  \left(  0,T\right)  ,W_{X}%
^{1,2}\left(  \Omega_{2K}\right)  \right)  $, we can apply Theorem
\ref{Thm subell plus} to conclude that $T_{U}^{K}u\in C^{0}\left(  \left[
0,T\right]  ,C^{\infty}\left(  \overline{\Omega_{2K-1}}\right)  \right)  $
and
\[
T_{U}^{K}u_{t}\in L^{2}\left(  \left(  0,T\right)  ,C^{\infty}\left(
\overline{\Omega_{2K-1}}\right)  \right)  .
\]
Applying Lemma \ref{tfCinf fCinf} to $u$ and $\partial_{t}u$ we see that
$T_{U}^{K-1}u\in W^{1,2}\left(  \left(  0,T\right)  ,W_{X}^{1,2}\left(
\Omega_{2K-2}\right)  \right)  $. Iterating this argument $K$ times shows that
$u\in W^{1,2}\left(  \left(  0,T\right)  ,W_{X}^{1,2}\left(  \Omega
^{\prime\prime}\right)  \right)  $. Since $F\in L^{2}\left(  \left(
0,T\right)  ,C^{\infty}\left(  \overline{\Omega}\right)  \right)  $ we can
apply again Theorem \ref{Thm subell plus} to conclude $u\in C^{0}\left(
\left[  0,T\right]  ,C^{\infty}\left(  \overline{\Omega^{\prime}}\right)
\right)  $ and $u_{t}\in L^{2}\left(  \left(  0,T\right)  ,C^{\infty}\left(
\overline{\Omega^{\prime}}\right)  \right)  $.
\end{proof}

\newpage

\bigskip

\textsc{Dipartimento di Matematica}

\textsc{Politecnico di Milano}

\textsc{Via Bonardi 9. 20133 Milano, Italy}

\textsc{e-mail: marco.bramanti@polimi.it}

\end{document}